\numberwithin{equation}{section}
\theoremstyle{plain} 
\newtheorem{theorem}[equation]{Theorem}
\newtheorem{lemma}[equation]{Lemma}
\newtheorem{proposition}[equation]{Proposition}
\newtheorem{corollary}[equation]{Corollary}
\theoremstyle{definition}
\newtheorem{definition}[equation]{Definition}
\newtheorem{example}[equation]{Example}
\newtheorem{remark}[equation]{Remark}
\newtheorem*{remark*}{Remark}
\newcommand{\defining}[1]{{\emph{#1}}}
\newcommand{\definedas}{:=}
\newcommand{\reals}{{\mathbb{R}}}
\newcommand{\integers}{{\mathbb{Z}}}
\newcommand{\rationals}{{\mathbb{Q}}}
\newcommand{\field}{{\mathbb{F}}}
\def\doCal#1{%
\ifx#1\doAllCalEnd\def\doAllCal{\relax}\else%
 \expandafter\edef\csname#1cal\endcsname{{\noexpand\mathcal #1}}\fi}
\def\doAllCal#1{\doCal#1\doAllCal}
\def\doBar#1{%
\ifx#1\doAllBarEnd\def\doAllBar{\relax}\else%
 \expandafter\edef\csname#1bar\endcsname{{\noexpand\overline{#1}}}\fi}
\def\doAllBar#1{\doBar#1\doAllBar}
\def\doWiggle#1{%
\ifx#1\doAllWiggleEnd\def\doAllWiggle{\relax}\else%
 \expandafter\edef\csname#1wiggle\endcsname{{\noexpand\tilde{#1}}}\fi}
\def\doAllWiggle#1{\doWiggle#1\doAllWiggle}
\def\doBold#1{%
\ifx#1\doAllBoldEnd\def\doAllBold{\relax}\else%
 \expandafter\edef\csname#1bold\endcsname{{\noexpand\bf #1}}\fi}
\def\doAllBold#1{\doBold#1\doAllBold}
\def\doBbb#1{%
\ifx#1\doAllBbbEnd\def\doAllBbb{\relax}\else%
 \expandafter\edef\csname#1bb\endcsname{{\noexpand\mathbb{#1}}}\fi}
\def\doAllBbb#1{\doBbb#1\doAllBbb}
\newcommand{\Hugestrut}{\mbox{{\Huge\strut}}}
\newcommand{\Largestrut}{\mbox{{\Large\strut}}}
\newcommand{\switchmargin}{
\if@reversemargin
\normalmarginpar
\else
\reversemarginpar
\fi
}
\definecolor{llgray}{RGB}{230,230,230}
\definecolor{lblue}{RGB}{217,249,237}
\newcommand{\highlight}[1]{\ifmmode{\text{\sethlcolor{llgray}\hl{$#1$}}}\else{\sethlcolor{llgray}\hl{#1}}\fi}
\newcommand{\highlighteva}[1]{\ifmmode{\text{\sethlcolor{llgray}\hl{$#1$}}}\else{\sethlcolor{lblue}\hl{#1}}\fi}
\DeclareMathOperator{\hocolim}{hocolim}
\DeclareMathOperator{\Hom}{Hom}
\DeclareMathOperator{\Out}{Out}
\newcommand{\pdash}{$p$\kern1.3pt-}
\newcommand{\Zpinfinity}{\integers/{p}^{\infty}}
\newcommand{\Zqinfinity}{\integers/{q}^{\infty}}
\newcommand{\Ztwoinfinity}{\integers/{2}^{\infty}}
\newcommand{\subgroupeq}{\subseteq}
\newcommand{\subgroup}{\subset}
\newcommand{\sd}{{\bar{s}d}}
\newcommand{\xlongrightarrow}[1]{\xrightarrow{\ #1\ }}
\newcommand\restr[2]{{
  \left.\kern-\nulldelimiterspace 
  #1 
  \vphantom{\big|} 
  \right|_{#2} 
  }}
\newcommand{\suchthat}[1]{\left|\, #1 \right. }
\newcommand{\pcomplete}[1]{{#1}_{p}^{\wedge}}
\renewcommand{\bar}{\widebar} 
\definecolor{llgray}{RGB}{230,230,230}
\newcommand{\Top}{\operatorname{Top}}
\newcommand{\widebar}[1]{{\overline{#1}}}
\newcommand{\longleftrightarrows}{\xymatrix@1@C=16pt{
\ar@<0.4ex>[r] & \ar@<0.4ex>[l]
}}
\renewcommand{\atop}[1]{{\let\scriptstyle\textstyle\let\scriptscriptstyle\scriptstyle\substack{#1}}}
\newcommand{\fusion}{$\Fcal$}
\newcommand{\group}{K}
\newcommand{\newgroup}{H}
\newtheorem*{maintheorem}{Theorem~\ref{theorem: comparison of chains}}
\newcommand{\maintheoremtext}
{Let $\Pbold_0\subgroupeq \ldots\subgroupeq \Pbold_{k}$ be a chain of \pdash toral subgroups of a compact Lie group~$G$, and let 
$P_0\subgroupeq \ldots\subgroupeq P_{k}$ be a chain of discrete \pdash toral subgroups such that each $P_i$ is a \pdash discretization of~$\Pbold_i$.
Then 
\[
N_{G}\left(P_0\subgroupeq\ldots\subgroupeq P_{k}\right)
\longrightarrow 
N_{G}\left(\Pbold_0\subgroupeq \ldots\subgroupeq \Pbold_{k}\right)
\]
induces a mod~$p$ equivalence of classifying spaces. }
\newtheorem*{chainstheorem}{Theorem~\ref{theorem: identification of chains}}
\newcommand{\chainstheoremtext}{
Let $\Sbold$ be a maximal \pdash toral subgroup of a compact Lie group~$G$, with \pdash discretization $S\subgroupeq\Sbold$. The closure map $P\mapsto\Pbar$ defines an injective map
\begin{equation*}
\begin{gathered}
\xymatrix{
\left\{\strut P_0\subgroupeq \ldots \subgroupeq P_k\subgroupeq S
\suchthat{
      \textrm{all $P_i$ are \fusion-centric and \fusion-radical}}\right\}
      /G\ar[d]\\
\left\{\strut \Pbold_0\subgroupeq \ldots \subgroupeq\Pbold_k \subgroupeq\Sbold
\suchthat{
       \textrm{all $\Pbold_i$ are \pdash toral,
       \pdash centric, and \pdash stubborn}}\right\}
       /G.
}
\end{gathered}
\end{equation*}
The map is a one-to-one correspondence if $\pi_0G$ is a \pdash group.}
\begin{document}

\title{Normalizers of chains of discrete \pdash toral subgroups in compact Lie groups }

\begin{abstract}
In this paper we study the normalizer decomposition of a compact Lie group $G$ using
the information of the fusion system $\Fcal$ of $G$ on a maximal discrete \pdash toral subgroup.
We prove that there is an injective map from the set of conjugacy classes of chains of \fusion-centric, \fusion-radical discrete \pdash toral subgroups to the set of conjugacy classes of chains of 
\pdash centric, \pdash stubborn continuous \pdash toral subgroups. The map is a bijection when $\pi_0(G)$ is a finite \pdash group.
We also prove that the classifying space of the normalizer of a chain of discrete \pdash toral subgroups of $G$ is mod $p$ equivalent to the classifying space of the normalizer of the corresponding  chain of \pdash toral subgroups.
\end{abstract}

\author{Eva Belmont}
\address{Department of Mathematics, University of California San Diego, La Jolla, CA, USA}
\email{ebelmont@ucsd.edu}

\author{Nat\`alia Castellana}
\address{Departament de Matem\`atiques, Universitat Aut\`onoma de Barcelona, and Centre de Recerca Matemàtica, Barcelona, Spain}
\email{natalia@mat.uab.cat}

\author{Jelena Grbi\'{c}}
\address{School of Mathematical Sciences, University of Southampton, Southampton, UK }
\email{j.grbic@soton.ac.uk}

\author{Kathryn Lesh}
\address{Department of Mathematics, Union College, Schenectady NY, USA}
\email{leshk@union.edu}

\author{Michelle Strumila}
\address{School of Mathematics and Statistics,
      Melbourne University, Parkville VIC, Australia}
\email{mstrumila@student.unimelb.edu.au}


\maketitle


\markboth{\sc{Belmont, Castellana, Grbi\'{c}, Lesh, and Strumila}}
{\sc{Normalizer decompositions}}


\section{Introduction}

In \cite{Dwyer-Homology}, Dwyer formalized and unified three homology decompositions for the \pdash completed classifying space of 
a finite group~$G$ based on a collection of \pdash subgroups: the centralizer decomposition, the subgroup decomposition, and the normalizer decomposition. The first two had been studied in \cite{JM} and \cite{JMO} for compact Lie groups, and the normalizer decomposition was new in this context. 
Dwyer showed that for a given collection of subgroups of a finite group~$G$, either all three decompositions give the correct homotopy type for $\pcomplete{BG}$ or none of them do. Such decompositions in the setting of Lie groups have since been studied by other authors, for example \cite{CLN,Libman-Minami, slominska-webb-conj,strounine}.
In particular, in \cite{Libman-Minami} Libman 
gives a normalizer decomposition and then 
unifies the three homology decompositions for Lie groups, as Dwyer did in the finite group case. 

Recently, the homotopy theory of \pdash local compact groups
\cite[Defn~4.2]{BLO-Discrete}
has provided a new, more general framework for dealing with the homotopy type of \pdash completed classifying spaces of compact Lie groups, 
in addition to other examples coming from finite loop spaces (\cite{BLO-LoopSpaces}). 
One works with discrete \pdash toral groups
(Definition~\ref{definition: terms}) instead of \pdash toral groups. 
The formal structure of a \pdash local compact group consists of a triple $(S,\Fcal,\Lcal)$ where $\Fcal$ is a saturated fusion system over the discrete \pdash toral groups $S$ and $\Lcal$ is a centric linking system associated to~$\Fcal$. But in view of \cite[Thm~B]{LL-ExistenceL} a \pdash local compact group is equivalent to just a pair $(S,\Fcal)$, namely a saturated fusion system over a discrete \pdash toral group.

When the \pdash local compact group 
arises from a compact Lie group~$G$, it encodes 
the essential \pdash local information needed to uniquely determine the homotopy type of~$\pcomplete{BG}$ (see \cite{BLO-Discrete}, \cite{Chermak}, \cite{Oliver-ExistenceL}, \cite{LL-ExistenceL}).
A~great advantage of studying Lie groups via this theory is that it reduces the study of a topological group to the study of a collection of \emph{discrete} subgroups.
There are also other interesting examples
of \pdash local compact groups. 
For example, one can construct a \pdash local compact group capturing the homotopy type of a \pdash compact group 
(an $\field_p$-finite loop space together with a chosen \pdash complete delooping, see \cite{dwyer-wilkerson-fixed-point}).
Other examples
of \pdash local compact groups 
are given in \cite{BLO-LoopSpaces} and \cite{Gonzalez-Lozano-Ruiz}.

To state the form of a normalizer decomposition more precisely,
consider a collection $\Ccal$ of closed subgroups of a Lie group~$G$.
Define $\sd(\Ccal)$ to be the poset of $G$-conjugacy classes of chains of proper inclusions in~$\Ccal$, say $\Hbold_*\definedas(\Hbold_0\subgroup \cdots \subgroup \Hbold_k)$. 
One can construct a functor $\delta\colon \sd(\Ccal)\rightarrow \Top$, and a natural transformation from $\delta$ to the constant functor with value $BG$,
to induce a map
\begin{equation} \label{eq: normalizer equivalance}
\pcomplete{\left(\underset{\sd(\Ccal)}{\hocolim}\, \delta\right)\!} 
\to \pcomplete{BG}
\end{equation}
such that $\delta(\Hbold_*)\simeq BN_G(\Hbold_0\subgroup \cdots \subgroup \Hbold_k)\definedas B\big(\bigcap_i N_G(\Hbold_i)\big)$. 
The following statement collects results of
\cite[Thm~C,~D]{Libman-Minami}, \cite[Thm~1.4]{JMO}, and \cite[Lemma~9.7]{BLO-Discrete} that establish collections for which the normalizer decomposition correctly computes the \pdash completed homotopy type of~$BG$. 

\begin{theorem}\label{thm:libman-intro}
Let $G$ be a compact Lie group and let $\Ccal$ be either \it{(i)} the collection of nontrivial \pdash radical \pdash toral subgroups or \it{(ii)} the collection of \pdash stubborn \pdash toral subgroups or \it{(iii)} the collection of \pdash centric \pdash toral subgroups of~$G$ (see Definition~\ref{defn: centric, radical, stubborn}). Then \eqref{eq: normalizer equivalance} is an equivalence.
\end{theorem}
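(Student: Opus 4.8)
The plan is to deduce each of the three cases from results already in the literature; the statement is a repackaging rather than a new argument, so the real work is checking that the hypotheses and the indexing data match up across sources. In every case the underlying mechanism is the theory of ``ample'' collections: one proves \eqref{eq: normalizer equivalance} is an equivalence by exhibiting $\Ccal$ as a collection for which the subgroup, centralizer, and normalizer decompositions all compute $\pcomplete{BG}$, which reduces --- via Dwyer's sharpness conditions \cite{Dwyer-Homology} and their extension to compact Lie groups in \cite{Libman-Minami} --- to the vanishing of certain higher limits over an orbit category associated to $\Ccal$.

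Concretely: for case~(i), the collection of nontrivial \pdash radical \pdash toral subgroups, I would quote \cite[Thm~C,~D]{Libman-Minami} directly --- there the normalizer decomposition of a compact Lie group is constructed and shown to be an equivalence for this collection, in parallel with Dwyer's treatment for finite groups. For case~(ii), the \pdash stubborn \pdash toral subgroups, \cite[Thm~1.4]{JMO} establishes the subgroup decomposition over the orbit category of \pdash stubborn subgroups, and the equivalence between the subgroup and normalizer decompositions (\cite{Dwyer-Homology}, \cite{Libman-Minami}) then yields \eqref{eq: normalizer equivalance}. For case~(iii), the \pdash centric \pdash toral subgroups, \cite[Lemma~9.7]{BLO-Discrete} supplies the corresponding decomposition, after one matches up the functor appearing there with $\delta$ and its value $\delta(\Hbold_*)\simeq B\big(\bigcap_i N_G(\Hbold_i)\big)$.

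The main obstacle is bookkeeping rather than mathematics. The sources use different frameworks --- Dwyer's EI-category and orbit-category language in \cite{JMO} and \cite{BLO-Discrete} versus the explicit ``subdivision of the poset of chains'' language used here --- and different normalization conventions for which chains are admitted (whether the trivial subgroup, or a maximal \pdash toral subgroup, may occur as an endpoint, and whether only proper inclusions are taken). One must also check that ``\pdash radical'', ``\pdash stubborn'', and ``\pdash centric'' as used in the sources agree with Definition~\ref{defn: centric, radical, stubborn}; note in particular that a subcollection of an ample collection need not itself be ample, so cases~(ii) and~(iii) cannot simply be read off from~(i) and genuinely require the separate references. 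Once these identifications are in place, each case is immediate.
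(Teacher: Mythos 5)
The paper gives no independent proof of this theorem; it is stated explicitly as a collection of results from \cite[Thm~C,~D]{Libman-Minami}, \cite[Thm~1.4]{JMO}, and \cite[Lemma~9.7]{BLO-Discrete}, which is exactly the citation-based route you take, with the same assignment of sources to the three cases. Your additional remarks about matching conventions and about subcollections of ample collections not being automatically ample are sensible caveats, but the approach is essentially identical to the paper's.
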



Our program's goal, taken up in a forthcoming work \cite{BCGLS-Main}, is a computable setup that generalizes the normalizer decomposition \eqref{eq: normalizer equivalance} from compact Lie groups to \pdash local compact groups. The formalism is a straightforward generalization of the earlier work of Libman \cite{Libman-normalizer} giving a normalizer decomposition for \pdash local {\emph{finite}} groups. In a result similar to Theorem~\ref{thm:libman-intro}, \cite{BCGLS-Main} will also show that if the \pdash local compact group corresponds to the fusion system~$\Fcal$, then 
the full subcategory of~$\Fcal$ consisting 
of $\Fcal$-centric and $\Fcal$-radical subgroups (Definition~\ref{defn: centric, radical, stubborn})
is 
sufficient to determine the homotopy type of the \pdash completed classifying space.  
This result is in the literature for finite groups (\cite[Thm 1.5]{Grodal}) and $p$-local finite groups (\cite[Thm~3.5]{BCGLO}), but not for \pdash local compact groups. 

When it comes to actual computations, the analysis of the spaces coming into our normalizer decompositions for \pdash local compact groups can be delicate. This paper is largely in service of understanding the spaces in the decompositions that we obtain in certain examples. In particular, we need to understand what happens in the case of a \pdash local compact group that arises from a compact Lie group, because we want to compare the decomposition we obtain in \cite{BCGLS-Main} with the earlier one of Libman for the corresponding Lie group \cite{Libman-Minami}.

We turn to a description of the contents of this paper and how they fit into our program. Let $G$ be a compact Lie group, and let 
$\Sbold\subgroupeq G$ be a maximal \pdash toral subgroup of $G$ with maximal discrete \pdash toral subgroup $S\subgroupeq \Sbold$. The corresponding fusion system $\Fcal$ associated to~$G$ is the category whose objects are the discrete \pdash toral subgroups of~$S$, and whose morphisms are given by homomorphisms induced by conjugation by elements of~$G$.
The goal of this paper is to establish that the left side of \eqref{eq: normalizer equivalance}, which is described in terms of chains of continuous \pdash toral groups and the action of~$G$, can instead be described in terms of~$\Fcal$, i.e. in terms of chains of discrete \pdash toral groups of~$G$  and morphisms in~$\Fcal$. 
There are two issues: the indexing category, and the values of the functor~$\delta$. 

Our first theorem addresses the indexing category, by relating conjugacy classes of chains of 
discrete \pdash toral subgroups of a compact Lie group~$G$ to conjugacy classes of chains of continuous \pdash toral subgroups of~$G$.
The following theorem establishes that the desired classes of chains can all be found by considering the \pdash stubborn \pdash toral subgroups of~$G$, which are classified in \cite{Oliver-p-stubborn} for classical groups. 
(See Definition~\ref{defn: p-discretization} for \pdash discretization.) 

\begin{chainstheorem}
\chainstheoremtext
\end{chainstheorem}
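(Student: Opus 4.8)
The plan is to reduce the statement to a ``dictionary'' comparing the fusion-theoretic conditions on a discrete \pdash toral subgroup $P\subgroupeq S$ with the Lie-theoretic conditions on its closure $\Pbar\subgroupeq G$, and then argue formally. First I would record the elementary properties of closure: it is $G$-equivariant ($\overline{gPg^{-1}}=g\Pbar g^{-1}$) and monotone, it carries a discrete \pdash toral subgroup to a \pdash toral subgroup of which $P$ is a \pdash discretization (a discrete \pdash toral subgroup is dense in its closure, and a dense discrete \pdash toral subgroup of a \pdash toral group is a maximal one, cf.~\cite{BLO-Discrete}), and $\Sbar=\Sbold$. Equivariance makes $[P_*]\mapsto[\Pbar_*]$ well defined on $G$-conjugacy classes, and monotonicity plus maximality of \pdash discretizations shows a chain of proper inclusions maps to one: if $\Pbar_i=\Pbar_{i+1}$ then $P_i\subgroupeq P_{i+1}$ are both maximal discrete \pdash toral in the common closure, so $P_i=P_{i+1}$.

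The substantive input is the dictionary, in two parts. \emph{(I)} For $P\subgroupeq S$: $P$ is \fusion-centric $\iff$ $\Pbar$ is \pdash centric. The direction ``\pdash centric $\Rightarrow$ \fusion-centric'' is short: since $P$ is dense in $\Pbar$ one has $C_G(P)=C_G(\Pbar)$, and $Z(\Pbar)\cap P=Z(P)$ is the maximal discrete \pdash toral subgroup of $Z(\Pbar)$; as $S$ is a \pdash group, $C_S(P)=C_G(\Pbar)\cap S=Z(\Pbar)\cap S=Z(P)$, and the same holds for every \fusion-conjugate of $P$ since \pdash centricity is conjugation-invariant. The converse is the comparison of \fusion-centric and \pdash centric subgroups in~\cite{BLO-Discrete}. \emph{(II)} If $\Pbar$ is \pdash centric, then $\Out_{\Fcal}(P)\isom\Out_{G}(\Pbar):=N_G(\Pbar)/\bigl(\Pbar\,C_G(\Pbar)\bigr)$: using $N_G(\Pbar)=\Pbar\cdot N_G(P)$ and $C_G(P)=C_G(\Pbar)$ one gets a surjection $\Out_{\Fcal}(P)\twoheadrightarrow\Out_G(\Pbar)$ with kernel $N_{\Pbar}(P)/\bigl(P\,Z(\Pbar)\bigr)$, and an averaging argument over the finite \pdash group $\pi_0\Pbar$ in the extension $1\to\Pbar^{\circ}\to\Pbar\to\pi_0\Pbar\to1$ (valid $\ell$-adically at each prime $\ell\neq p$) shows $N_{\Pbar}(P)=P\,Z(\Pbar)$. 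Since $\Pbar$ is \pdash centric, $N_G(\Pbar)/\Pbar$ is finite and is an extension of $\Out_G(\Pbar)$ by the prime-to-$p$ group $C_G(\Pbar)/Z(\Pbar)$; so $O_p(\Out_G(\Pbar))=1$ forces $O_p(N_G(\Pbar)/\Pbar)=1$, i.e.\ \fusion-radical $\Rightarrow$ \pdash stubborn. Together with (I), the map is well defined.

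\emph{Injectivity.} If $g\Pbar_ig^{-1}=\Qbar_i$ for all $i$, then $\{gP_ig^{-1}\}$ and $\{Q_i\}$ are two chains of \pdash discretizations compatible with the fixed chain $\{\Qbar_i\}$. I would show by induction on $k$ that any two such chains are conjugate by a single element $x\in\Qbar_k$ --- the base case is uniqueness up to conjugacy of \pdash discretizations of a \pdash toral group (\cite{BLO-Discrete}), and the step is a relative version governing the extension of a fixed \pdash discretization of $\Qbar_{k-1}$ to one of $\Qbar_k$ --- and then $xg$ conjugates $P_*$ to $Q_*$.

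\emph{Surjectivity when $\pi_0G$ is a \pdash group.} Given \pdash toral, \pdash centric, \pdash stubborn $\Pbold_0\subgroupneq\cdots\subgroupneq\Pbold_k\subgroupeq\Sbold$, build \pdash discretizations $P_0'\subgroupneq\cdots\subgroupneq P_k'$ with $\overline{P_i'}=\Pbold_i$ by extending one step at a time (strictness is automatic since $\overline{P_i'}=\Pbold_i\subgroupneq\Pbold_{i+1}$), then conjugate the whole chain into $S$ using that $P_k'$ is subconjugate to the maximal discrete \pdash toral subgroup $S$; the result maps to a $G$-conjugate of $\{\Pbold_i\}$, and each $P_i$ is \fusion-centric by (I) and \fusion-radical by (II) once $O_p(\Out_G(\overline{P_i}))=1$. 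Here is where the hypothesis enters: in general a \pdash centric, \pdash stubborn $\Pbar$ can have $O_p(\Out_G(\Pbar))\neq1$, with the offending normal \pdash subgroup absorbed in $N_G(\Pbar)/\Pbar$ by the prime-to-$p$ group $C_G(\Pbar)/Z(\Pbar)$ (e.g.\ $G=(\Z/2)^2\rtimes S_3$, $\Pbar=(\Z/2)^2$, $p=2$); but when $\pi_0G$ is a \pdash group one has $C_G(\Pbar)=Z(\Pbar)$ for \pdash centric $\Pbar$, so $\Out_G(\Pbar)=N_G(\Pbar)/\Pbar$ and \pdash stubbornness of $\Pbar$ is literally \fusion-radicality of $P$. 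I expect the implication ``$\pi_0G$ a \pdash group $\Rightarrow C_G(\Pbar)=Z(\Pbar)$ for \pdash centric $\Pbar$'', together with the chain bookkeeping in the injectivity step, to be the main obstacle.
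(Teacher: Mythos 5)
Your overall architecture matches the paper's: a dictionary between \fusion-centric/\fusion-radical and \pdash centric/\pdash stubborn, injectivity via a relative uniqueness statement for \pdash discretizations of chains, and surjectivity by building discretizations one step at a time and using the hypothesis that $\pi_0G$ is a \pdash group to kill $C_G(\Pbar)/Z(\Pbar)$. But there is a genuine gap at the very first step, and it propagates through everything else. You assert that closure ``carries a discrete \pdash toral subgroup to a \pdash toral subgroup of which $P$ is a \pdash discretization,'' justified by the claim that a dense discrete \pdash toral subgroup of a \pdash toral group is maximal. That claim is false: $\Zpinfinity$ embeds via a homomorphism as a dense subgroup of $S^1\times S^1$, yet it is not a maximal discrete \pdash toral subgroup there (the maximal one is $(\Zpinfinity)^2$); density does not control the rank. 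So a general $P\subgroupeq S$ need not be snugly embedded in $\Pbar$, and every step of your parts (I) and (II) that invokes properties of \pdash discretizations --- $Z(P)$ being the \pdash torsion of $Z(\Pbar)$, the identity $N_{\Pbar}(P)=P\cdot Z(\Pbar)$, the isomorphism $\Out_{G}(P)\cong\Out_G(\Pbar)$, and the injectivity argument where you treat $gP_ig^{-1}$ and $Q_i$ as two chains of \pdash discretizations of the fixed chain $\Qbar_i$ --- is unfounded until you show the subgroups in the domain of the map are snug. The paper must prove exactly this (Lemma~\ref{lemma: bullet-snugly-embedded}): if $P$ is \fusion-centric \emph{and} \fusion-radical, then $P$ is snugly embedded; the proof is not formal, expanding $P$ to a \pdash discretization $Q$ of $\Pbar$ inside $S$, showing $N_Q(P)/P$ is a normal \pdash subgroup of $\Out_G(P)$, and invoking \fusion-radicality to force $Q=P$. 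This is the one substantive ingredient your proposal is missing, and both hypotheses on $P$ are needed for it.

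Two smaller points. The example $G=(\integers/2)^2\rtimes S_3$, $\Pbar=(\integers/2)^2$, $p=2$ does not illustrate the phenomenon you want: with the faithful $S_3$-action one has $C_G(\Pbar)=\Pbar=Z(\Pbar)$ and $\Out_G(\Pbar)\cong S_3$, whose largest normal $2$-subgroup is trivial, so this $\Pbar$ is \pdash stubborn and its discretization is \fusion-radical after all. (The phenomenon is real --- one wants $N_G(\Pbar)/\Pbar\cong S_3$ with $C_G(\Pbar)/Z(\Pbar)\cong\integers/3$ --- but you need a different group.) Also, the forward implication ``\fusion-centric $\Rightarrow$ \pdash centric'' of your dictionary (I), which you defer to \cite{BLO-Discrete}, itself requires snugness and hence the \fusion-radical hypothesis; stated as a clean equivalence for arbitrary $P\subgroupeq S$ it is not available.
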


Our second theorem deals with the values of the functor $\delta$ in~\eqref{eq: normalizer equivalance}. In particular, we 
relate the mod~$p$ homotopy type of the classifying spaces of normalizers of chains of discrete \pdash toral subgroups to those of chains of continuous \pdash toral subgroups. Since our decomposition for \pdash local compact groups will involve the former, this theorem will relate \emph{(i)} the decomposition given by our \pdash local compact group methods
applied to the case of a compact Lie group and \emph{(ii)} the decomposition for a compact Lie group that is obtained by \cite{Libman-Minami}. 

\begin{maintheorem}
\maintheoremtext
\end{maintheorem}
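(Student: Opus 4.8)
The plan is to reduce the statement to a statement about a single \pdash toral subgroup and its \pdash discretization, and then induct on the length $k$ of the chain. First I would recall that for a single \pdash toral subgroup $\Pbold$ with \pdash discretization $P\subgroupeq\Pbold$, the normalizer $N_G(\Pbold)$ acts on $\Pbold$, hence permutes its discrete \pdash toral subgroups; the key fact (which should be available from the theory of discrete \pdash toral subgroups, cf.\ \cite{BLO-Discrete}) is that $P$ is characteristic in $\Pbold$ up to $\Pbold$-conjugacy, more precisely that $N_G(\Pbold)$ is generated by $N_G(P\subgroupeq\Pbold)\definedas N_G(P)\cap N_G(\Pbold)$ together with $\Pbold$ itself, and that the inclusion $N_G(P\subgroupeq\Pbold)\into N_G(\Pbold)$ is a mod~$p$ equivalence on classifying spaces. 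This last point is the crux: $\Pbold/P$ is a union of copies of $(\Zpinfinity)^r$, which is a uniquely \pdash divisible-by-nothing but $\field_p$-acyclic object, so the fibration $B(N_G(P\subgroupeq\Pbold))\to BN_G(\Pbold)$ has homotopy fiber built from $\Pbold/P$-type pieces and is therefore a mod~$p$ equivalence.

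Next, for a chain $P_0\subgroupeq\cdots\subgroupeq P_k$ discretizing $\Pbold_0\subgroupeq\cdots\subgroupeq\Pbold_k$, I would write
\[
N_G(P_0\subgroupeq\cdots\subgroupeq P_k)=\bigcap_{i=0}^{k} N_G(P_i),
\qquad
N_G(\Pbold_0\subgroupeq\cdots\subgroupeq\Pbold_k)=\bigcap_{i=0}^{k} N_G(\Pbold_i),
\]
and compare the two intersections one subgroup at a time. The technical engine is a relative version of the single-subgroup statement: if $K\subgroupeq G$ is a closed subgroup normalizing both $\Pbold_i$ and all the $P_j$ for $j\ne i$, then $K\cap N_G(P_i)\into K\cap N_G(\Pbold_i)$ is a mod~$p$ equivalence of classifying spaces, by the same $\field_p$-acyclicity argument applied inside $K$. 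Iterating over $i=0,\dots,k$ and using that a composite of mod~$p$ equivalences is a mod~$p$ equivalence gives the result. One subtlety: when passing from $P_i$ to $\Pbold_i$ I must make sure the other constraints $P_j$ (for $j<i$) and $\Pbold_j$ (for $j>i$) are compatible, i.e.\ that $P_j\subgroupeq P_i$ is still a \pdash discretization of $\Pbold_j\subgroupeq\Pbold_i$ after conjugating to make things characteristic; this should follow because \pdash discretizations are unique up to conjugacy by $\Pbold_i$, and such conjugations can be absorbed.

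The main obstacle I anticipate is establishing the single-subgroup mod~$p$ equivalence $BN_G(P\subgroupeq\Pbold)\to BN_G(\Pbold)$ with enough care about the component structure, since $\Pbold$ need not be connected and $\pi_0\Pbold$ is a finite \pdash group acting on the identity component. I would handle this by first treating the case $\Pbold$ connected (a \pdash torus), where $\Pbold/P\isom(\Zpinfinity)^r$ directly and $N_G(\Pbold)/N_G(P\subgroupeq\Pbold)$ is an extension built from $(\Zpinfinity)^r$, so the classifying space map is a mod~$p$ equivalence because $B(\Zpinfinity)^r$ is $\field_p$-acyclic; then I would bootstrap to the general case using the extension $1\to\Pbold^{\circ}\to\Pbold\to\pi_0\Pbold\to1$ and the fact that $\pi_0\Pbold$ is already finite and discrete so contributes nothing to the discretization. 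A clean way to package the whole argument is via the homotopy fiber sequence relating $BN_G(P_*)$, $BN_G(\Pbold_*)$, and a classifying space of the relevant quotient of a product of $\Zpinfinity$'s, and then invoke that $\field_p$-homology commutes with the relevant (co)limits; I would spell this out inductively to keep the bookkeeping of the several intersected normalizers under control.
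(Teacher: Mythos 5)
There is a genuine gap at the heart of your argument: the claimed $\field_p$-acyclicity of the fiber. You assert that $\Pbold/P$ ``is a union of copies of $(\Zpinfinity)^r$'' and that ``$B(\Zpinfinity)^r$ is $\field_p$-acyclic.'' Both statements are false. A \pdash discretization $P$ \emph{contains} all the \pdash torsion of the maximal torus (Lemma~\ref{lemma: discretizations contain T_p}), so for $\Pbold$ a torus the quotient $\Pbold/P$ is the uniquely \pdash divisible group $T_{p'}\times T_{\infty}$, not a union of \pdash torsion groups; and $B\Zpinfinity$ is mod~$p$ equivalent to $BS^1$, so its mod~$p$ cohomology is a polynomial algebra --- the opposite of acyclic. (Indeed, $B\Zpinfinity\simeq_p BS^1$ is the entire reason discrete \pdash toral groups can stand in for \pdash toral ones.) Even after correcting the identification of the quotient, your fibration argument needs the homotopy fiber of $BN_G(P)\to BN_G(\Pbold)$ to be the classifying space of a uniquely \pdash divisible \emph{discrete} group, whereas what it actually is (with subspace topologies) is the coset space $N_G(\Pbold)/N_G(P)\cong \Pbold/\left(P\cdot Z(\Pbold)\right)$, a quotient of a compact group by a \emph{dense} subgroup, whose topology and singular homology are pathological. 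The legitimate input is Lemma~\ref{lem:snuglypcompleted} (\cite[Prop.~9.2]{BLO-Discrete}), that $BP\to B\Pbar$ is a mod~$p$ equivalence for snug $P$, and it must be propagated through centers, centralizers, and normalizers via explicit group extensions rather than via coset-space fibers.

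Your overall architecture also differs from the paper's, which does not replace $P_i$ by $\Pbold_i$ one at a time inside the intersection $\bigcap_i N_G(P_i)$. Instead it exhibits $N_G(P_*)$ as an extension of the \emph{finite} group $\Out_G(P_*)=N_G(P_*)/\left(C_G(P_k)\cdot N_{P_k}(P_*)\right)$ (Definition~\ref{defn: outer of a chain}, Lemma~\ref{lemma: out of chain is finite}), proves $\Out_G(P_*)\cong\Out_G(\Pbold_*)$ (Proposition~\ref{proposition: iso of outer autos discrete to cont}), and compares the kernels by a central extension along $Z(P_k)$ together with an induction on chain length carried out first inside a single \pdash toral group (Proposition~\ref{proposition: p-toral case of normalizers}); every step is a Serre spectral sequence argument whose inputs are Lemma~\ref{lem:snuglypcompleted} and Corollary~\ref{cor: center is snug}. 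Finally, the compatibility issue you flag at the end --- absorbing the conjugations that match up different discretizations while preserving the rest of the chain --- is real and is precisely the content of Proposition~\ref{proposition: uniqueness of relative dptas}, which produces a conjugating element \emph{centralizing} the smaller group; your iteration cannot go through without proving such a statement rather than asserting it.
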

\noindent The proof introduces the outer automorphism group of a chain $H_0\subgroupeq\cdots \subgroupeq H_k$ (Definition~\ref{defn: outer of a chain}), which turns out to be a finite group and plays an important role in the argument. (See Proposition~\ref{proposition: iso of outer autos discrete to cont}, Lemma~\ref{lemma: out of chain is finite}, and diagram~\eqref{eq: ladder for normalizers}.)

In summary, this paper provides the technical results necessary to compare 
two normalizer decompositions for classifying spaces of compact Lie groups: the one obtained by applying \cite{BCGLS-Main} to a \pdash local compact group arising from a Lie group~$G$, and the earlier one due to Libman~\cite{Libman-Minami}, obtained by techniques using $G$-actions. 
The two decompositions are related by taking closures of discrete \pdash toral subgroups, which brings up surprisingly subtle issues. Hence we develop some useful tools for studying the relationship between discrete \pdash toral groups and their closures, as well as the relationship between the classifying spaces of their respective normalizers.


\subsection*{Notation.}
Throughout the paper, $G$ denotes a compact Lie group. Our convention for conjugation is that $c_g(x)=g^{-1}xg$. We generally use a boldface font to denote a topological group, as opposed to a discrete group, with the exception of the ambient Lie group~$G$ itself. For example, we use $\Pbold$ for a \pdash toral group, and $P$ for a discrete \pdash toral group. 

\subsection*{Organization.}
Section~\ref{section: p-toral} includes background material on \pdash toral and discrete \pdash toral subgroups of a compact Lie group.  Section~\ref{section: relative dptas} presents a key technical result
on \pdash discretization of pairs, which allows us to understand how chains of discrete \pdash toral subgroups conjugate inside their closures. Some of the results of this section already appear in \cite{BLO-Discrete}, but we present some simplified proofs. Section~\ref{section: what are the chains} contains the proof of Theorem~\ref{theorem: identification of chains}.
Lastly, in Section~\ref{section: normalizers} we introduce the group of outer automorphisms of a chain and we prove Theorem~\ref{theorem: comparison of chains}.
 
\subsection*{Acknowledgements.}
This paper is the first part of the authors' Women in Topology III project. A second part of that project will appear in a separate article \cite{BCGLS-Main}. We thank the organizers of the Women in Topology III workshop, as well as the Hausdorff Research Institute for Mathematics, where the workshop was held. 
The Women in Topology III workshop was supported by NSF grant DMS-1901795, the AWM ADVANCE grant NSF HRD-1500481, and Foundation Compositio Mathematica. The second author was partially supported by Spanish State Research Agency through the FEDER-MEC grant MTM2016-80439-P, and the Severo Ochoa and María de Maeztu Program for Centers and Units of Excellence in R$\&$D (CEX2020-001084-M). Finally, the authors gratefully acknowledge exchanges with Bill Dwyer, who suggested the approach to \pdash discretizations used in 
Section~\ref{section: relative dptas}, and also thank the anonymous referee for a careful reading and suggestions for improvements. 


\medskip
\section{\pdash toral and discrete \pdash toral subgroups of Lie groups}
\label{section: p-toral}

In this section, we give background material on \pdash toral and 
discrete \pdash toral subgroups of a compact Lie group~$G$. First, the definitions.

\begin{definition}  \label{definition: terms}
\hfill 
\begin{enumerate}
    \item A group is \defining{\pdash toral of rank~$r$} if it is an extension of a torus of rank~$r$ by a finite \pdash group. 
    \item A \defining{discrete \pdash torus of rank~$r$} is a group isomorphic to a product $\left(\Zpinfinity\right)^r$. 
    \item A \defining{discrete \pdash toral group of rank~$r$} is an extension of a discrete \pdash torus of rank~$r$ by a finite \pdash group.  
\end{enumerate}
\end{definition}

\begin{remark*}
When we want to emphasize the difference between a \pdash toral group and a discrete \pdash toral group, we will sometimes refer to the former as a \emph{continuous \pdash toral group}.
\end{remark*}

The \pdash toral subgroups of a compact Lie group $G$ play a key role in the analysis of the mod~$p$ homology of the classifying space of~$G$, analogous to the role played by the \pdash subgroups in the case of a finite group. However, there is a key difference between the finite and topological contexts: 
subgroups of finite \pdash groups are finite \pdash groups, but subgroups of \pdash toral groups need not be \pdash toral. 
For example, $S^1$ is a \pdash toral group, but it has finite subgroups of order prime to~$p$ (certainly not \pdash toral) as well as the subgroup $\Zpinfinity\subgroup S^1$ (also not \pdash toral, for a different reason).  
By contrast, a subgroup of a discrete \pdash toral group is necessarily another discrete \pdash toral group.
This feature of discrete \pdash toral subgroups of a compact Lie group~$G$ gives them an advantage over continuous \pdash toral subgroups as tools to approximate~$G$. 

A compact Lie group~$G$ admits both maximal 
continuous \pdash toral subgroups and maximal discrete \pdash toral subgroups, both of which have properties analogous to those of the 
Sylow \pdash subgroups of a finite group. 

\begin{proposition}[{\cite[Prop.~9.3]{BLO-Discrete}}] 
\label{proposition: existence of maximal}
Let $G$ be a compact Lie group. 
\begin{enumerate}
    \item Every \pdash toral subgroup (respectively, discrete \pdash toral subgroup) of~$G$ is contained in a maximal one. 
    \item All maximal \pdash toral subgroups (respectively, discrete \pdash toral subgroup) are conjugate in~$G$. 
\end{enumerate}
\end{proposition}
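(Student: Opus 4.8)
The plan is to prove both parts of the Proposition at once by exhibiting a single maximal (discrete) \pdash toral subgroup and showing that \emph{every} (discrete) \pdash toral subgroup of~$G$ is subconjugate to it; parts~(1) and~(2) then follow formally. The one substantial input is the classical theory of maximal tori: $G$ has a maximal torus~$T$, unique up to $G$-conjugacy, with $N_G(T)^{0}=T$, so $W:=N_G(T)/T$ is finite; moreover $N_G(T)$ meets every component of~$G$ (since all maximal tori of~$G$ are $G$-conjugate), so $\chi(G/T)=\order{W}$.

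\emph{Continuous case.} Fix a Sylow \pdash subgroup $W_p\subgroupeq W$ and let $\Sbold\subgroupeq N_G(T)$ be its preimage, so $\Sbold$ is an extension of the torus~$T$ by the finite \pdash group~$W_p$ and hence is \pdash toral. I claim every \pdash toral subgroup $\Pbold\subgroupeq G$ is $G$-conjugate into~$\Sbold$. The right $W$-action on $G/T$ is free, so $X:=G/\Sbold$ is a closed smooth manifold with $\chi(X)=\chi(G/T)/\order{W_p}=[W:W_p]$, which is prime to~$p$. Let~$\Pbold$ act on~$X$: since the identity component~$\Pbold^{0}$ is a torus, $X^{\Pbold^{0}}$ is a closed manifold with $\chi(X^{\Pbold^{0}})=\chi(X)\not\equiv 0\pmod p$, in particular nonempty; then the finite \pdash group $\Pbold/\Pbold^{0}$ acts on $X^{\Pbold^{0}}$, and a finite \pdash group acting on a closed manifold of Euler characteristic prime to~$p$ has a fixed point. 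A fixed point of~$\Pbold$ is a coset $g\Sbold$ with $g^{-1}\Pbold g\subgroupeq\Sbold$, proving the claim. Next, $\Sbold$ is maximal: if $\Sbold\subgroupeq\Pbold'$ with $\Pbold'$ \pdash toral, pick $g$ with $g^{-1}\Pbold' g\subgroupeq\Sbold$; then $g^{-1}\Sbold g\subgroupeq\Sbold$, comparing identity components gives $g\in N_G(T)$, comparing finite indices over~$T$ gives $g^{-1}\Sbold g=\Sbold$, and hence $\Pbold'\subgroupeq\Sbold\subgroupeq\Pbold'$. Since every \pdash toral subgroup lies in a $G$-conjugate of~$\Sbold$ and conjugates of maximal subgroups are maximal, part~(1) holds; and then every maximal \pdash toral subgroup is a $G$-conjugate of~$\Sbold$, so part~(2) holds.

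\emph{Discrete case.} Let $T_{\mathrm{disc}}\subgroupeq T$ be the subgroup of \pdash power torsion, a discrete \pdash torus preserved setwise under conjugation by $N_G(T)$. The quotient $T/T_{\mathrm{disc}}$ is uniquely \pdash divisible, so $H^{*}(W_p;T/T_{\mathrm{disc}})$ vanishes in positive degrees; hence the extension class of $1\to T\to\Sbold\to W_p\to 1$ in $H^{2}(W_p;T)$ lies in the image of $H^{2}(W_p;T_{\mathrm{disc}})$, which yields a discrete \pdash toral subgroup $S\subgroupeq\Sbold$ with $S\cap T=T_{\mathrm{disc}}$, $S/T_{\mathrm{disc}}\cong W_p$, and $\Sbar=\Sbold$; this is the candidate maximal discrete \pdash toral subgroup. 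Given a discrete \pdash toral $P\subgroupeq G$, I would first repeat the fixed-point argument on $X=G/\Sbold$: since $P^{0}\cong(\Zpinfinity)^{r}$ is an increasing union of finite \pdash groups, their nested fixed loci in~$X$ stabilize to a nonempty closed manifold $X^{P^{0}}$ with $\chi(X^{P^{0}})\not\equiv 0\pmod p$, after which the finite \pdash group $P/P^{0}$ has a fixed point; this shows $P$ is $G$-conjugate into~$\Sbold$. It then remains to conjugate a given $P\subgroupeq\Sbold$ into~$S$ by an element of~$T$: one checks that $P\cap T\subgroupeq T_{\mathrm{disc}}$ (every element of $P\cap T$ has \pdash power order, since $P/P^{0}$ is a finite \pdash group and $P^{0}\subgroupeq T_{\mathrm{disc}}$), so coset representatives for~$P$ and for~$S$ over their common image in~$W_p$ differ by a crossed homomorphism into $T/T_{\mathrm{disc}}$, necessarily a coboundary because $H^{1}(W_p;T/T_{\mathrm{disc}})=0$; lifting the associated $0$-cochain to~$T$ conjugates~$P$ into~$S$. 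Maximality of~$S$ and the deductions of (1) and~(2) then go through verbatim as in the continuous case, with closures and~$T_{\mathrm{disc}}$ in place of identity components and~$T$.

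The step I expect to be the real obstacle is this last one --- conjugating a discrete \pdash toral subgroup of~$\Sbold$ into the fixed \pdash discretization~$S$ --- which is genuinely a question about how a discrete \pdash toral group conjugates inside its closure; this is exactly the material developed in Section~\ref{section: relative dptas}, so a clean treatment would quote those results instead of carrying out the cocycle bookkeeping by hand. Everything else reduces to the existence and conjugacy of maximal tori together with the standard Euler-characteristic fixed-point theorems for torus actions and for finite \pdash group actions on closed manifolds.
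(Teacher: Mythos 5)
The paper does not prove this proposition at all: it is imported verbatim from \cite[Prop.~9.3]{BLO-Discrete}, so there is no internal argument to compare yours against. Your reconstruction is essentially the standard proof of that cited result, and it is correct in outline. The continuous case is complete as written: the identities $N_G(T)^0=T$ and $\chi(G/T)=\order{W}$ (valid for non-connected $G$ once one notes $N_G(T)\to\pi_0G$ is onto), the freeness of the $W$-action on $G/T$ giving $\chi(G/\Sbold)=[W:W_p]$ prime to $p$, the fixed-point theorems for torus actions and for finite \pdash group actions on spaces of Euler characteristic prime to $p$, and the maximality argument all go through. In the discrete case two steps deserve slightly more care than you give them: (i) the stabilization of the nested fixed-point sets $X^{(\integers/p^n)^r}$ requires the observation that a strictly decreasing chain of closed submanifolds with finitely many components must drop dimension or component count and hence terminates; and (ii) the conjugation of a given $P\subgroupeq\Sbold$ into the chosen $S$, which you rightly flag as the crux. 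Your mechanism for (ii) --- unique $p$-divisibility of $T/T_{\mathrm{disc}}$ killing $H^1$ and $H^2$ of the finite \pdash group with those coefficients, both to build $S$ and to absorb the crossed homomorphism --- is sound, and it is the same averaging/splitting device that this paper packages as Lemma~\ref{lemma: splitting of ZG-mod} and Proposition~\ref{proposition: uniqueness of relative dptas} in Section~\ref{section: relative dptas}; quoting those results, as you suggest, would indeed replace the cocycle bookkeeping. So: correct, and consonant with the cited source rather than with anything proved in this paper.
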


Unfortunately, discrete \pdash toral subgroups will not be good approximations to continuous \pdash toral groups when their group theoretic properties interact badly with their embeddings. In particular, 
a discrete \pdash toral group can have smaller rank than its closure, and this can occur even when the groups are
just tori. For example, $\Zpinfinity$ can be embedded via a homomorphism as a dense subgroup of $S^{1}\times S^{1}$. 
To set apart good approximations from bad ones, we have the following definition. 

\begin{definition}[{\cite[Defn.~9.1]{BLO-Discrete}}]
\label{definition:snuggly-embedded}
A discrete \pdash toral subgroup $P\subgroupeq G$ is \defining{snugly embedded} if $P$ is a maximal discrete \pdash toral subgroup of~$\Pbar$. 
\end{definition}

\begin{lemma}[{\cite[Prop.~9.2]{BLO-Discrete}}]
\label{lem:snuglypcompleted}
If $P\subgroupeq G$ is a snugly embedded discrete \pdash toral group, then
$P\hookrightarrow\Pbar$ induces a homotopy equivalence 
$\pcomplete{(BP)}\simeq \pcomplete{\left(B\Pbar\right)}$.
\end{lemma}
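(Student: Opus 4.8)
The plan is to strip off the component group of $\Pbar$, reducing the statement first to the case of tori and ultimately to the classical fact that $\Zpinfinity\hookrightarrow S^1$ is a mod~$p$ homology equivalence.

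First I would pin down the group theory forced by snug embedding. Write $\Pbar$ as a \pdash toral group $1\to\Tbar\to\Pbar\to\pi_0(\Pbar)\to 1$, where $\Tbar$ is the maximal torus (the identity component of $\Pbar$) and $\pi_0(\Pbar)$ is a finite \pdash group, and let $\Tbar[p^\infty]\subgroupeq\Tbar$ denote the subgroup of elements of $p$-power order. This $\Tbar[p^\infty]$ is the unique maximal discrete \pdash toral subgroup of $\Tbar$ (it contains every discrete \pdash toral subgroup of $\Tbar$, since those consist of elements of $p$-power order), and it is normal in $\Pbar$. The claim is that snug embedding forces $P\cap\Tbar=\Tbar[p^\infty]$ and forces $P\to\pi_0(\Pbar)$ to be surjective. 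Indeed, $P\cdot\Tbar[p^\infty]$ is a subgroup of $\Pbar$ (since $\Tbar[p^\infty]$ is normal), and it is an extension of $\Tbar[p^\infty]$ by the discrete \pdash toral group $P/(P\cap\Tbar[p^\infty])$, hence is itself discrete \pdash toral; as it contains $P$, maximality of $P$ in $\Pbar$ gives $\Tbar[p^\infty]\subgroupeq P$. Since $P\cap\Tbar$ is a discrete \pdash toral subgroup of the torus $\Tbar$, it lies in $\Tbar[p^\infty]$, so $P\cap\Tbar=\Tbar[p^\infty]$; and since $P$ is dense in $\Pbar$, its image in the finite discrete group $\pi_0(\Pbar)=\Pbar/\Tbar$ is everything. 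Thus the inclusion $P\hookrightarrow\Pbar$ fits into a map of group extensions, identical on the quotient $\pi_0(\Pbar)$, from $1\to\Tbar[p^\infty]\to P\to\pi_0(\Pbar)\to 1$ to $1\to\Tbar\to\Pbar\to\pi_0(\Pbar)\to 1$.

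Passing to classifying spaces (with $BP=K(P,1)$, since $P$ is discrete), this map of extensions produces a map of fibration sequences
\[
\xymatrix{
{B(\Tbar[p^\infty])}\ar[r]\ar[d] & BP\ar[r]\ar[d] & B\pi_0(\Pbar)\ar@{=}[d]\\
B\Tbar\ar[r] & B\Pbar\ar[r] & B\pi_0(\Pbar)
}
\]
over a common base. Comparing the associated Serre spectral sequences with $\field_p$ coefficients then shows $BP\to B\Pbar$ is a mod~$p$ homology isomorphism as soon as the map of fibres $B(\Tbar[p^\infty])\to B\Tbar$ is one. Choosing an isomorphism $\Tbar\isom(S^1)^r$ identifies that fibre map with the $r$-fold product of $B\Zpinfinity\to BS^1$, and mod~$p$ homology isomorphisms are closed under finite products, so it suffices to treat the rank-one case. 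There the statement is classical: $B\Zpinfinity$ is the increasing union of the infinite lens spaces $B(\integers/p^n)$, and a passage to the limit shows that $H^*(B\Zpinfinity;\field_p)$ is a polynomial algebra on a class in degree two, matching $H^*(BS^1;\field_p)$, with the inclusion realizing the isomorphism. Finally, a mod~$p$ homology isomorphism induces a homotopy equivalence of $\field_p$-completions, so $\pcomplete{(BP)}\simeq\pcomplete{(B\Pbar)}$, as required.

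I expect the main obstacle to be the first step: establishing precisely how a snugly embedded $P$ lies in $\Pbar$, namely that the sole effect of passing from $\Pbar$ to $P$ is to replace the maximal torus $\Tbar$ by its dense $p$-torsion subgroup $\Tbar[p^\infty]$, leaving the component group unchanged. The technical input there is that the class of discrete \pdash toral groups is closed under extensions, which is what makes $P\cdot\Tbar[p^\infty]$ discrete \pdash toral. One can instead sidestep that input by constructing a discrete model $P_0\subgroupeq\Pbar$ by hand: the extension class classifying $\Pbar$ over $\pi_0(\Pbar)$ lifts along $\Tbar[p^\infty]\hookrightarrow\Tbar$ because $H^2(\pi_0(\Pbar);-)$ vanishes on uniquely \pdash divisible coefficient modules and $\Tbar/\Tbar[p^\infty]$ is uniquely \pdash divisible; one then concludes using conjugacy of maximal discrete \pdash toral subgroups (Proposition~\ref{proposition: existence of maximal}) together with the normality of $\Tbar[p^\infty]$ in $\Pbar$. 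Once that first step is in hand, the rest is a routine Serre spectral sequence comparison together with the classical torus computation.
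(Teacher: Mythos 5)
Your argument is correct, but note that the paper does not prove this lemma at all: it is quoted verbatim from \cite[Prop.~9.2]{BLO-Discrete}, so there is no in-paper proof to compare against. What you have written is essentially the standard (and, as far as I recall, BLO's own) argument: identify the snugly embedded $P$ as the extension of $\pi_0(\Pbar)$ by the \pdash torsion subgroup $T_p$ of the maximal torus $\Tbar$, compare the two fibrations over $B\pi_0(\Pbar)$, and reduce to the classical fact that $B\Zpinfinity\to BS^1$ is an $\field_p$-homology isomorphism. Your first step also independently recovers Lemma~\ref{lemma: discretizations contain T_p}. Three small points to make the write-up airtight: (i) the closure-under-extensions fact you lean on is exactly \cite[Lemma~1.3]{BLO-Discrete}, so it is a legitimate citation rather than an obstacle (your cohomological workaround via unique \pdash divisibility is fine too, and is in the spirit of Lemma~\ref{lemma: splitting of ZG-mod}); (ii) you implicitly use that $\Pbar$ is \pdash toral with identity component a torus --- this follows since the closure of the divisible group $T_P\subgroupeq P$ is a divisible, hence connected, compact abelian Lie group of finite \pdash power index in $\Pbar$, but it deserves a sentence; (iii) the base $B\pi_0(\Pbar)$ is not simply connected, so the Serre spectral sequence comparison must be run with local coefficients --- this is harmless because $BT_p\to B\Tbar$ is induced by a $\pi_0(\Pbar)$-equivariant inclusion of groups, so the $E_2$-pages are identified as $H_*(\pi_0(\Pbar);-)$ of isomorphic $\field_p[\pi_0(\Pbar)]$-modules, but say so.
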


Not all discrete \pdash toral subgroups of a compact Lie group $G$ are snugly embedded. However, since any \pdash toral group possesses maximal discrete \pdash toral subgroups by Proposition~\ref{proposition: existence of maximal}, a \pdash toral group 
can always be approximated by a snugly embedded discrete \pdash toral group. A more compact terminology will be helpful. 

\begin{definition}   \label{defn: p-discretization}
Let $\Pbold$ be a \pdash toral subgroup of~$G$, and let $P\subgroupeq \Pbold$ be a snugly embedded discrete \pdash toral subgroup with $\Pbar=\Pbold$. We say that $P$ is a \defining{\pdash discretization} 
of~$\Pbold$. 
\end{definition}

In particular, a \pdash discretization of $\Pbold$ is characterized by being a maximal discrete \pdash toral subgroup of~$\Pbold$. 

\begin{example}\label{example: parametrization of dptas}
A torus has only one \pdash discretization, namely the subgroup consisting of all \pdash torsion elements. Similarly,  Proposition~\ref{proposition: existence of maximal}(2) establishes that any abelian \pdash toral group has a unique \pdash discretization.

In general, however, a \pdash toral group $\Pbold$ that has 
multiple components has many \pdash discretizations. If $P\subgroupeq \Pbold$ is one such, then the others are all conjugate to~$P$ 
in~$\Pbold$ by Proposition~\ref{proposition: existence of maximal}(2). The stabilizer of $P$ is $N_\Pbold(P)$, so the approximations are parametrized by $\Pbold/N_\Pbold(P)$. (See also Remark~\ref{remark: infinite p-discretizations}.)

The simplest nontrivial example with more than one discretization is the $2$-toral group 
$\Pbold=O(2)\cong S^{1}\rtimes\{\pm 1\}$, where $-1$ is represented by
reflection over the $y$-axis. 
An obvious $2$-discretization is given by the subgroup $P=\Ztwoinfinity\rtimes\{\pm 1\}$. 
A direct matrix calculation shows that $N_\Pbold(P)=P$, so in fact the $2$-discretizations are parametrized by $\Pbold/P\cong S^1/\left(\Ztwoinfinity\right)$. 
The other parametrizations are given by 
\begin{equation}    \label{eq: other discretization}
P'=\left(\strut \Ztwoinfinity\times\{1\}\right)\ 
\sqcup 
\ \left(\strut \xi \cdot \Ztwoinfinity\times\{-1\}\right)
\end{equation}
where $\xi$ is any fixed element of~$S^1$. 
And indeed, the proof of \cite[9.3]{BLO-Discrete} establishes that, in general, the different \pdash discretizations of a \pdash toral group $\Pbold$ can be obtained by conjugation by an element of the torus of~$\Pbold$.
\end{example}

We close this section by observing that, as in \eqref{eq: other discretization}, any \pdash discretization must start with the unique \pdash discretization of the torus. 

\begin{lemma}   \label{lemma: discretizations contain T_p}
If $\Pbold$ is a \pdash toral group with maximal torus~$\Tbold$, and $T_p$ denotes the \pdash torsion elements of $\Tbold$, then any \pdash discretization of $\Pbold$ must contain~$T_p$. 
\end{lemma}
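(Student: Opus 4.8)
The plan is to exploit the characterization recorded just after Definition~\ref{defn: p-discretization}: a \pdash discretization $P$ of $\Pbold$ is exactly a \emph{maximal} discrete \pdash toral subgroup of~$\Pbold$. So if $P$ failed to contain $T_p$, I would produce a strictly larger discrete \pdash toral subgroup of~$\Pbold$, namely $P\cdot T_p$, contradicting maximality. The only preliminary fact needed is that $T_p$ is normal in~$\Pbold$: since $\Pbold$ is \pdash toral with maximal torus~$\Tbold$, the torus $\Tbold$ is the identity component of~$\Pbold$, hence normal, and $T_p$, being the set of \pdash power-torsion elements of~$\Tbold$, is characteristic in~$\Tbold$; thus $T_p\trianglelefteq\Pbold$, and in particular $Q\definedas P\cdot T_p$ is a subgroup of~$\Pbold$ containing~$P$.

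Next I would check that $P\cap\Tbold\subseteq T_p$. This is just the observation that every element of a discrete \pdash toral group has \pdash power order: an element maps to an element of \pdash power order in the finite \pdash group quotient, and a suitable \pdash th power of it lies in the discrete \pdash torus, all of whose elements have \pdash power order. Hence an element of $P\cap\Tbold$ is a \pdash power-torsion element of~$\Tbold$, i.e. an element of $T_p$. Consequently $P\cap T_p=P\cap\Tbold$.

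The main step is then to verify that $Q=P\cdot T_p$ is discrete \pdash toral. By the second isomorphism theorem (legitimate since $T_p\trianglelefteq\Pbold$) we have $Q/T_p\cong P/(P\cap T_p)=P/(P\cap\Tbold)$, and $P/(P\cap\Tbold)\cong P\Tbold/\Tbold$ embeds into $\Pbold/\Tbold$, which is a finite \pdash group. Therefore $Q$ is an extension of the discrete \pdash torus $T_p$ by a finite \pdash group, so by Definition~\ref{definition: terms} it is a discrete \pdash toral subgroup of~$\Pbold$ containing~$P$. Maximality of $P$ forces $Q=P$, i.e. $T_p\subseteq P$.

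I do not expect a serious obstacle; the argument is short. The one point deserving care is precisely the subtlety the paper keeps flagging — subgroups of continuous \pdash toral groups can behave badly — so one must genuinely verify that $Q=P\cdot T_p$ is discrete \pdash toral rather than assume it; but this is immediate once one has $P\cap\Tbold\subseteq T_p$ together with the normality of $T_p$ in~$\Pbold$. The only mildly non-obvious idea is to enlarge $P$ by $T_p$ and invoke maximality, rather than trying to argue directly about the internal structure of~$P$.
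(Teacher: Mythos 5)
Your argument is correct, and it takes a genuinely different route from the paper's. The paper's proof is a two-line conjugation argument: by Proposition~\ref{proposition: existence of maximal}(1) the subgroup $T_p$ expands to \emph{some} \pdash discretization $P'$ of~$\Pbold$, by part~(2) $P'$ is conjugate in $\Pbold$ to the given~$P$, and since $T_p$ is normal in $\Pbold$ the conjugation carries $T_p$ to itself, so $T_p\subseteq P$. You instead enlarge $P$ directly to $Q=P\cdot T_p$, verify that $Q$ is a discrete \pdash toral subgroup, and invoke maximality of~$P$. Both proofs need the normality of $T_p$ in~$\Pbold$ (which you justify correctly: $\Tbold$ is the identity component, and $T_p$ is characteristic in it), but yours avoids the conjugacy statement of Proposition~\ref{proposition: existence of maximal}(2) entirely, using only the characterization of a \pdash discretization as a maximal discrete \pdash toral subgroup. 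The price is that you must check by hand that $Q$ is discrete \pdash toral, which you do correctly: every element of a discrete \pdash toral group has \pdash power order, so $P\cap\Tbold\subseteq T_p$, and then $Q/T_p\cong P/(P\cap\Tbold)$ embeds in the finite \pdash group $\Pbold/\Tbold$, exhibiting $Q$ as an extension of a discrete \pdash torus by a finite \pdash group. This is exactly the kind of verification the paper warns cannot be skipped for subgroups of continuous \pdash toral groups, and you were right to flag and carry it out. Your version is marginally more self-contained; the paper's is shorter because it leans on machinery already established.
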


\begin{proof}
By Proposition~\ref{proposition: existence of maximal}, $T_p$ can be expanded to a \pdash discretization $P'$ of~$\Pbold$, and $P'$ is conjugate to $P$ in~$\Pbold$. However, $T_p$ is a normal subgroup of~$\Pbold$ and hence is stabilized by the conjugation. Thus $T_p\subseteq P$ as well. 
\end{proof}


\medskip
\section{\pdash discretizations of pairs}
\label{section: relative dptas}

Let $\Pbold$ be a \pdash toral group. 
Proposition~\ref{proposition: existence of maximal} tells us that all \pdash discretizations of~$\Pbold$ are conjugate in~$\Pbold$. It also tells us that if $\Qbold\subgroupeq\Pbold$ is a \pdash toral subgroup 
and $Q$ is a \pdash discretization of~$\Qbold$, then $Q$ can be expanded to a \pdash discretization $P$ of~$\Pbold$. 
However, it is likely that there is more than one way to expand~$Q$;  that is, there can be different pairs of discrete \pdash toral subgroups $(Q,P_1)$ and $(Q,P_2)$ that are \pdash discretizations for the pair $(\Qbold,\Pbold)$. The main goal of this section is to establish the following proposition, which can be thought of as a uniqueness statement about \pdash discretizations
of pairs. The point is that $P_1$ and $P_2$ are conjugate in $\Pbold$ by an element that fixes~$Q$.
Hence the pair $(Q,P_1)$ is conjugate in $\Pbold$ to the pair $(Q,P_2)$. 

\begin{proposition} \label{proposition: uniqueness of relative dptas} 
If $P_1$ and $P_2$ are \pdash discretizations of~$\Pbold$, and $Q\subgroupeq P_1\cap P_2$, then there exists $\ybold\in C_{\Pbold}(Q)$ such that $c_{\ybold}\left(P_1\right)=P_2$. 
\end{proposition}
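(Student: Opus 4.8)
The plan is to start from the fact, recalled in Example~\ref{example: parametrization of dptas} (from the proof of \cite[9.3]{BLO-Discrete}), that any two \pdash discretizations of $\Pbold$ are conjugate by an element of the maximal torus of $\Pbold$, and then to correct the conjugating element so that it centralizes $Q$. Write $\Tbold$ for the maximal torus of $\Pbold$, set $W=\pi_0(\Pbold)$, and let $T_p\subgroupeq\Tbold$ be the subgroup of \pdash torsion elements. Choose $\tbold\in\Tbold$ with $c_{\tbold}(P_1)=P_2$; our task is to produce $\ybold$ that simultaneously conjugates $P_1$ to $P_2$ and centralizes $Q$. Two preliminary observations. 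First, every \pdash discretization $P$ of $\Pbold$ satisfies $P\cap\Tbold=T_p$: the inclusion $\supergroupeq$ is Lemma~\ref{lemma: discretizations contain T_p}, while $\subgroupeq$ holds because $P\cap\Tbold$ is a discrete \pdash toral group and therefore consists of \pdash power torsion elements. Second, since $\Tbold$ is abelian and normal in $\Pbold$, conjugation makes $\Tbold$ a module over $W$, hence over the finite \pdash group $\bar Q\subgroupeq W$ which is the image of $Q$; I will write this action additively.

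A direct computation then shows that for $q\in Q$,
\[
c_{\tbold}(q)\,q^{-1}\;=\;\tbold^{-1}\bigl(q\,\tbold\,q^{-1}\bigr)\;=\;(\bar q-1)\cdot\tbold,
\]
and this element lies in $c_{\tbold}(P_1)\cap\Tbold=P_2\cap\Tbold=T_p$ (using $q,q^{-1}\in Q\subgroupeq P_2$ and $c_{\tbold}(q)\in c_{\tbold}(P_1)=P_2$). Hence $\phi\colon\bar Q\to T_p$, $\phi(\bar q)\definedas(\bar q-1)\cdot\tbold$, is a well-defined $T_p$-valued $1$-cochain which, viewed with coefficients in the full torus, is the coboundary $\partial\tbold$ of $\tbold$.

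The crux is to upgrade $\phi$ to a coboundary \emph{with coefficients in $T_p$}: I claim there is $z_0\in T_p$ with $(\bar q-1)\cdot z_0=\phi(\bar q)$ for all $\bar q\in\bar Q$. Since $\bar Q$ is finite, $\phi$ takes values in the \pdash torsion subgroup $\Tbold[p^k]$ for some $k$, so $p^k\phi=0$; as $\phi=\partial\tbold$ this forces $p^k\tbold\in\Tbold^{\bar Q}$. Now $\Tbold^{\bar Q}$ is a compact abelian Lie group, so abstractly $\Tbold^{\bar Q}\isom A_0\times F$ with $A_0$ a torus and $F$ a finite abelian group; both $A_0$ and the prime-to-$p$ part $F_{(p')}$ of $F$ are uniquely $p^k$-divisible. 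Decomposing $p^k\tbold$ along $A_0\times F_{(p)}\times F_{(p')}$ and subtracting from $\tbold$ the corresponding element of $A_0\times F_{(p')}\subgroupeq\Tbold^{\bar Q}$ (which leaves $\partial\tbold=\phi$ unchanged), one obtains $z_0\in\Tbold$ with $\partial z_0=\phi$ and $p^kz_0\in F_{(p)}$; hence $z_0$ is killed by a further power of $p$, i.e.\ $z_0\in T_p$. (Alternatively, one may invoke the standard fact that $\pi_0(\Tbold^{\bar Q})$ is a finite \pdash group, which makes the prime-to-$p$ bookkeeping vacuous.)

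Given such a $z_0$, set $\ybold\definedas\tbold\,z_0^{-1}\in\Tbold$. Then $(\bar q-1)\cdot\ybold=\phi(\bar q)-(\bar q-1)\cdot z_0=0$ for every $q\in Q$, so $\ybold$ is fixed by the conjugation action of $Q$, i.e.\ $\ybold\in C_{\Tbold}(Q)\subgroupeq C_{\Pbold}(Q)$. Moreover $z_0\in T_p\subgroupeq P_2$, so conjugation by $z_0^{-1}$ fixes $P_2$, and therefore $c_{\ybold}(P_1)=c_{z_0^{-1}}\bigl(c_{\tbold}(P_1)\bigr)=c_{z_0^{-1}}(P_2)=P_2$. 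Thus $\ybold$ is the required element. The step I expect to be the main obstacle is the third paragraph — promoting $\phi$ from a coboundary over the full torus $\Tbold$ to a coboundary over its torsion subgroup $T_p$ (equivalently, lifting a $\bar Q$-fixed class in $\Tbold/T_p$ to a $\bar Q$-fixed element of $\Tbold$); the rest is formal once the torus-conjugacy of \pdash discretizations is in hand.
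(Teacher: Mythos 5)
Your proof is correct, but it takes a genuinely different route from the paper's. The paper chooses an arbitrary $\ybold\in\Pbold$ with $c_{\ybold}(P_1)=P_2$ and then corrects it using Lemma~\ref{lemma: outer transporters}: the conjugation map $c_{\ybold}\colon Q\to c_{\ybold}(Q)$ between two subgroups of $P_2$ can be realized by some $x\in P_2$, and $\ybold\cdot x^{-1}$ then centralizes $Q$ while still carrying $P_1$ to $P_2$. That lemma in turn rests on the non-canonical retraction $\Pbold\to P$ of Lemma~\ref{lemma: retraction to dpta}, obtained by splitting $T_{p'}\times T_{\infty}$ off the torus as a $\Z[\pi_0\Pbold]$-module. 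You instead start from the torus-conjugacy of \pdash discretizations (Example~\ref{example: parametrization of dptas}) and perform the correction entirely inside $\Tbold$, by showing that the coboundary $\bar q\mapsto(\bar q-1)\cdot\tbold$, a priori only a coboundary with coefficients in $\Tbold$, is actually a coboundary with coefficients in $T_p$; your divisibility bookkeeping in $\Tbold^{\bar Q}$ plays the role of the paper's module splitting, and both arguments ultimately exploit the same fact that the divisible and prime-to-$p$ parts can be divided by powers of $p$ without disturbing the relevant data. Your route is self-contained and even yields the slightly stronger conclusion $\ybold\in C_{\Tbold}(Q)$, whereas the paper's detour through Lemma~\ref{lemma: outer transporters} pays for itself because that lemma is reused repeatedly later (e.g.\ in Lemma~\ref{lemma: outs for one group} and Proposition~\ref{proposition: iso of outer autos discrete to cont}). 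One cosmetic slip: the identity component $A_0$ of $\Tbold^{\bar Q}$ is $p^k$-divisible but not \emph{uniquely} so; since your argument only uses existence of the division, nothing breaks.
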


Our approach is based on a non-canonical (and non-topological) splitting of \pdash toral groups, for which we use the following
standard homological lemma. 

\begin{lemma}    \label{lemma: splitting of ZG-mod} 
Let $\group$ be a finite group, and let 
\[
0\longrightarrow I \longrightarrow X \longrightarrow V \longrightarrow 0
\]
be a short exact sequence of $\integers[\group]$-modules. If $V$ is uniquely $|\group|$-divisible and $I$ is an injective $\integers$-module, then there is a splitting 
$X\cong I\times V$ as $\integers[\group]$-modules. 
\end{lemma}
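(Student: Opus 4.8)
The plan is to split the sequence first merely as abelian groups, then to fix up the splitting so that it becomes $K$-equivariant by averaging over $K$, and finally to cancel the factor of $|K|$ that averaging introduces by using that $V$ is uniquely $|K|$-divisible.

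\textbf{Step 1 (split over $\integers$).} Since $I$ is an injective $\integers$-module, the inclusion $I\into X$ admits a $\integers$-linear retraction, so the short exact sequence splits in the category of abelian groups. In particular there is a $\integers$-linear section $s\colon V\to X$ of the projection $\pi\colon X\to V$, which at this stage need not be compatible with the $K$-actions.

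\textbf{Step 2 (average).} Define $\tilde s\colon V\to X$ by $\tilde s(v)=\sum_{g\in K}g\cdot s(g^{-1}\cdot v)$. Re-indexing the sum by $g\mapsto hg$ shows that $\tilde s$ is $\integers[K]$-linear, and since $\pi$ is $K$-equivariant with $\pi\circ s=\id_V$ one gets $\pi\circ\tilde s=|K|\cdot\id_V$. (The only thing to be careful about here is the direction: one must average a section $V\to X$, not the retraction $X\to I$; averaging the latter would instead yield multiplication by $|K|$ on $I$, where we only have divisibility, not necessarily \emph{unique} divisibility.)

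\textbf{Step 3 (invert $|K|$ on $V$).} Because $V$ is uniquely $|K|$-divisible, multiplication by $|K|$ is a bijection $V\to V$, and being multiplication by a central integer it is a $\integers[K]$-module automorphism $\mu$. Then $\sigma\definedas\tilde s\circ\mu^{-1}\colon V\to X$ is $\integers[K]$-linear and satisfies $\pi\circ\sigma=\id_V$, so the sequence splits over $\integers[K]$ and hence $X\cong I\times V$ as $\integers[K]$-modules. I do not expect a genuine obstacle in this argument; the content is just the combination of the two divisibility hypotheses at the right places. (Equivalently, one can run the whole thing through $\Ext$: the restriction--transfer composite shows $|K|$ annihilates $\Ext^1_{\integers[K]}(V,I)$ since $\Ext^1_{\integers}(V,I)=0$, while unique $|K|$-divisibility of $V$ makes multiplication by $|K|$ invertible on $\Ext^1_{\integers[K]}(V,I)$, forcing it to vanish.)
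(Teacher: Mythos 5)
Your proof is correct and follows essentially the same route as the paper's: split over $\integers$ using injectivity of $I$, then average the section over the group and use unique $|\group|$-divisibility of $V$ to remove the resulting factor of $|\group|$ (the paper phrases this as dividing by $|\group|$ inside $\Hom_{\integers}(V,X)$, which is uniquely divisible because $V$ is, while you pre-compose with the inverse of multiplication by $|\group|$ on $V$ --- the same element in the end). Your parenthetical warning about averaging the section rather than the retraction is a correct and worthwhile observation.
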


\begin{proof}
Because $I$ is an injective abelian group, there is a retraction of abelian groups $r\colon X\rightarrow I$,
which in turn defines a section $s\colon V\rightarrow X$ of abelian groups. However, $\Hom_{\integers}(V,X)$ is uniquely $|\group|$-divisible
(because $V$ is),
so we can define a new section $\swiggle\colon V\rightarrow X$
by averaging over the elements of~$\group$,
\[
\swiggle\,\definedas \frac{1}{|\group|}\sum_{y\in \group} y^{-1}sy. 
\]
Then $\swiggle$ is a section of $X\rightarrow V$ as $\integers[\group]$-modules, which establishes the lemma. 
\end{proof}

We use the following notation for the splitting result below. 
Let $\Tbold=\reals^r/\integers^r$ be a rank~$r$ torus, 
whose subgroup of torsion elements is denoted 
$T_{\rationals}\definedas \rationals^r/\integers^r$.
The quotient of $\Tbold$ by the torsion elements is denoted  
$T_{\infty}\definedas \Tbold/T_{\rationals}$. If $p$ is a prime, then the \pdash torsion subgroup of $\Tbold$ is denoted $T_p\definedas\left(\Zpinfinity\right)^r$,
and we write $T_{p'}$ for the subgroup of $\Tbold$ consisting of torsion elements of order prime to~$p$, i.e. the product of all the subgroups $\left(\Zqinfinity\right)^r$ over primes $q\neq p$. 

\begin{lemma}   \label{lemma: retraction to dpta} 
Let $\Pbold$ be a \pdash toral group with \pdash discretization~$P\subgroupeq \Pbold$. There exists a (non-canonical, discontinuous) group homomorphism $\Pbold\rightarrow P$ that splits the inclusion $P\hookrightarrow\Pbold$. Any such splitting has the property that if $P'$ is another \pdash discretization of $\Pbold$, then $P'\rightarrow\Pbold\rightarrow P$ is an isomorphism.
\end{lemma}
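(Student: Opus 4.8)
The plan is to build the splitting in two stages, first splitting off the prime-to-$p$ part of the maximal torus to reduce to a situation where the relevant module is a discrete $p$-torus, and then applying Lemma~\ref{lemma: splitting of ZG-mod} with $\group=\pi_0(\Pbold)$. Concretely: let $\Tbold$ be the maximal torus of $\Pbold$, let $T_{p'}\subgroupeq\Tbold$ be the torsion of order prime to $p$, and note $T_{p'}$ is a normal (even characteristic) subgroup of $\Pbold$, since it is characteristic in $\Tbold$ and $\Tbold$ is normal in $\Pbold$. The quotient $\Pbold/T_{p'}$ has maximal torus $\Tbold/T_{p'}\cong\Tbold$ (a torus again), and the image of $P$ in $\Pbold/T_{p'}$ is a $p$-discretization of $\Pbold/T_{p'}$; moreover the composite $P\hookrightarrow\Pbold\to\Pbold/T_{p'}$ is injective because $P\cap T_{p'}=1$ ($P$ is a discrete $p$-toral group, so it has no nontrivial elements of order prime to $p$). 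So it suffices to produce a splitting of $\Pbold/T_{p'}\to P$ over the image of $P$; equivalently, we may as well assume from the start that $\Tbold$ has no prime-to-$p$ torsion, i.e. replace $\Pbold$ by $\Pbold/T_{p'}$.

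Next I would set up the short exact sequence to feed into Lemma~\ref{lemma: splitting of ZG-mod}. With $\Tbold$ now a rank-$r$ torus, consider
\[
0\longrightarrow T_p\longrightarrow \Tbold\longrightarrow T_\infty\longrightarrow 0,
\]
where $T_p=(\Zpinfinity)^r$ is the $p$-torsion and $T_\infty=\Tbold/T_p\cong(\reals/\integers[1/p])^r$ is a $\rationals$-vector space, hence uniquely $n$-divisible for every $n$. This is a sequence of $\integers[\pi_0\Pbold]$-modules for the conjugation action (here $\pi_0\Pbold$ acts because $T_p$ and $T_\infty$ are characteristic in $\Tbold$, which is normal in $\Pbold$, and conjugation by $\Pbold$ descends to $\pi_0\Pbold$ on $T_\infty$ since $\Tbold$ acts trivially on its abelian quotient... actually $\Tbold$ acts trivially on all of $\Tbold$, so conjugation on $T_p,T_\infty$ factors through $\pi_0\Pbold$). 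Now $T_p$ is an injective $\integers$-module (it is divisible), and $T_\infty$ is uniquely $|\pi_0\Pbold|$-divisible, so Lemma~\ref{lemma: splitting of ZG-mod} gives a $\pi_0\Pbold$-equivariant splitting $\Tbold\cong T_p\times T_\infty$. The projection $\rho\colon\Tbold\to T_p$ is then $\Pbold$-equivariant. I then extend $\rho$ over $\Pbold$: choose a set-theoretic section $\sigma$ of $\Pbold\to\pi_0\Pbold$ landing in $P$ (possible since $P$ surjects onto $\pi_0\Pbold$, as $P$ is a $p$-discretization, so $P\Tbold=\Pbold$), and define $\Pbold\to P$ by sending $t\cdot\sigma(g)\mapsto \rho(t)\cdot\sigma(g)$ for $t\in\Tbold$. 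One checks this is a homomorphism using equivariance of $\rho$, the cocycle relating $\sigma(g)\sigma(h)$ to $\sigma(gh)$, and the fact that that cocycle takes values in $P\cap\Tbold=T_p$ (so $\rho$ fixes it). Restricted to $P=\,(P\cap\Tbold)\cdot\sigma(\pi_0\Pbold)=T_p\cdot\sigma(\pi_0\Pbold)$ this is the identity, by Lemma~\ref{lemma: discretizations contain T_p} which tells us $P\cap\Tbold=T_p$.

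For the last sentence: if $P'$ is another $p$-discretization, then $P'\cap\Tbold=T_p$ as well (Lemma~\ref{lemma: discretizations contain T_p}, together with rank considerations — $P'\cap\Tbold$ is a discrete $p$-toral subgroup of $\Tbold$ containing $T_p$, and $T_p$ is the unique $p$-discretization of the torus $\Tbold$, so it is maximal, forcing equality), and $P'$ also surjects onto $\pi_0\Pbold$ with kernel $T_p$. The composite $P'\hookrightarrow\Pbold\to P$ then fits in a map of extensions
\[
\begin{CD}
1 @>>> T_p @>>> P' @>>> \pi_0\Pbold @>>> 1\\
@. @VV{=}V @VVV @VV{=}V @.\\
1 @>>> T_p @>>> P @>>> \pi_0\Pbold @>>> 1
\end{CD}
\]
with the identity on kernel and cokernel, hence is an isomorphism by the five lemma.

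The main obstacle I expect is bookkeeping the equivariance carefully enough that the formula $t\cdot\sigma(g)\mapsto\rho(t)\cdot\sigma(g)$ really is multiplicative — this is where the hypothesis that $P$ is a genuine $p$-discretization (not just any discrete $p$-toral subgroup) gets used, via $P\cap\Tbold=T_p$ and the surjection onto $\pi_0\Pbold$, and one has to make sure the section $\sigma$ is chosen inside $P$ so the $2$-cocycle lands in $T_p$ where $\rho$ is the identity. Everything else is routine: the reduction modulo $T_{p'}$, the application of Lemma~\ref{lemma: splitting of ZG-mod}, and the five-lemma argument at the end.
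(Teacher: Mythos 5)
Your proposal is correct and follows essentially the same route as the paper: both apply Lemma~\ref{lemma: splitting of ZG-mod} with $\group=\pi_0\Pbold$ to obtain a $\group$-equivariant complement to $T_p$ inside $\Tbold$, and both settle the final claim using $T_p\subgroupeq P'$ together with a five-lemma comparison of extensions; the only difference is that the paper extends the retraction over all of $\Pbold$ by forming the quotient $\Pwiggle=\Pbold/(T_{p'}\times T_{\infty})$ and inverting the resulting isomorphism $P\to\Pwiggle$, instead of your explicit section-and-cocycle formula. One cosmetic slip: after you quotient by $T_{p'}$, the group $\Tbold/T_p$ is $(\reals/\rationals)^r$ rather than $(\reals/\integers[1/p])^r$ (the latter still has prime-to-$p$ torsion), but the group you actually use is indeed uniquely divisible, so nothing breaks.
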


\begin{proof}
Let $\Tbold$ be the maximal torus of~$\Pbold$ and let $\group=\pi_{0}\Pbold=\Pbold/\Tbold$, a finite \pdash group. By Lemma~\ref{lemma: splitting of ZG-mod} applied to $\Tbold$ (considered as a discrete group), there is a split short exact sequence of $\integers[\group]$-modules 
\[
0\longrightarrow T_{p}
 \longrightarrow \Tbold
 \longrightarrow T_{p'}\times T_{\infty}
 \longrightarrow 0.
\]
Note that by Lemma~\ref{lemma: discretizations contain T_p}, $T_p\subgroupeq P$. Further, because $T_{p'}\times T_{\infty}$ is split from $\Tbold$ as a $\integers[\group]$-module, we know $T_{p'}\times T_{\infty}$ is normal in~$\Pbold$ and we can define the quotient 
$\Pwiggle\definedas \Pbold/\left(T_{p'}\times T_{\infty}\right)$, which is 
a discrete \pdash toral group. 
(We note here that we have completely discarded the topology on the torus. 
The key tool resulting from Lemma~\ref{lemma: retraction to dpta}
is Lemma~\ref{lemma: outer transporters}, and the topology is not needed there.) 

Consider the commutative ladder of exact sequences
\begin{equation} \label{eq: 3 by 3 diagram}
\begin{gathered}
\xymatrix{
0\ar[r] & T_p\ar[r]\ar[d] 
        & P\ar[r]\ar[d]_-{i}
        & \group\ar[r]\ar[d]^{=}
        & 0\\
0 \ar[r]& \underbrace{T_{p}\times T_{p'}\times T_{\infty}}_{\Tbold}
          \ar[r] \ar[d]
        & \Pbold\ar[r]\ar[d]_-{q}
        & \group\ar[r]\ar[d]^{=}
        & 0\\
0\ar[r] & T_p\ar[r]
        & \Pwiggle\ar[r]
        & \group\ar[r]
        & 0.
}
\end{gathered}
\end{equation}
By construction, the compositions of the two maps in the first and third columns
are identity maps on $T_p$ and $\group$, respectively.  
Hence the composite $f\definedas q\circ i\colon P\rightarrow \Pwiggle$ is an isomorphism.
Then $f^{-1}\circ q\colon \Pbold \rightarrow P$ is the required group homomorphism splitting the inclusion $P\subgroupeq \Pbold$.

If $P'\subgroup\Pbold$ is another \pdash discretization of~$\Pbold$, then $P'$ also contains $T_p$ (Lemma~\ref{lemma: discretizations contain T_p}). Therefore we can substitute $P'$ for $P$ in \eqref{eq: 3 by 3 diagram} and the composite $P'\rightarrow\Pbold\rightarrow\Pwiggle$ will still be an isomorphism. Hence composing with the isomorphism 
$f^{-1}\colon \Pwiggle\rightarrow P$ finishes the proof. 
\end{proof}

Using the splitting, we are able to show a sense in which a \pdash discretization $P\subseteq\Pbold$ is able to capture conjugation information present in~$\Pbold$.  The statement below is a slight generalization of \cite[Lemma~9.4(a)]{BLO-Discrete} and a couple of statements in its proof.

\begin{lemma}  \label{lemma: outer transporters}
Let $\Pbold$ be a \pdash toral subgroup with \pdash discretization~$P$. Let $Q_1$ and $Q_2$ be subgroups of~$P$, and suppose that a group homomorphism $f\colon Q_1\rightarrow Q_2$ is induced by conjugation in~$\Pbold$. Then $f$ can be induced by conjugation in~$P$. 
\end{lemma}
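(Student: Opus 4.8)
The plan is to read off the conclusion directly from the group-theoretic retraction supplied by Lemma~\ref{lemma: retraction to dpta}. Fix a (discontinuous) group homomorphism $\rho\colon\Pbold\rightarrow P$ splitting the inclusion $P\hookrightarrow\Pbold$, so in particular $\rho(x)=x$ for every $x\in P$. By hypothesis $f$ is induced by conjugation in~$\Pbold$, i.e.\ there is an element $\gbold\in\Pbold$ with $f(x)=c_{\gbold}(x)=\gbold^{-1}x\gbold$ for all $x\in Q_1$, and $f(x)\in Q_2$. Set $g\definedas\rho(\gbold)\in P$.

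Now I would simply compute, for $x\in Q_1\subgroupeq P$, using that $\rho$ is a homomorphism that fixes $P$ pointwise and that $f(x)\in Q_2\subgroupeq P$:
\[
c_g(x)=g^{-1}xg=\rho(\gbold)^{-1}\rho(x)\rho(\gbold)=\rho\!\left(\gbold^{-1}x\gbold\right)=\rho\bigl(f(x)\bigr)=f(x).
\]
Hence $f=c_g|_{Q_1}$ is induced by conjugation by the element $g\in P$, which is exactly the assertion.

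The entire content of the argument is the existence of an honest \emph{group} homomorphism $\rho\colon\Pbold\to P$ retracting the inclusion: if $\rho$ were merely a set-theoretic retraction, the middle equality $\rho(\gbold)^{-1}\rho(x)\rho(\gbold)=\rho(\gbold^{-1}x\gbold)$ would be meaningless. So the "hard part" has already been done in Lemma~\ref{lemma: retraction to dpta} (and ultimately in the averaging trick of Lemma~\ref{lemma: splitting of ZG-mod}); given that input, the present lemma is immediate and requires no further case analysis or topology. I would also remark that $g$ can be taken to lie in the subgroup of~$P$ corresponding to $\gbold$, which is occasionally convenient, but this is not needed for the statement as phrased.
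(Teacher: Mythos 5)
Your proof is correct and is essentially the same as the paper's: both take the group-theoretic retraction $r\colon\Pbold\to P$ from Lemma~\ref{lemma: retraction to dpta} and observe that, because $Q_1$ and $Q_2$ lie in $P$ where $r$ restricts to the identity, conjugation by $\ybold$ and by $r(\ybold)$ agree on $Q_1$. The paper phrases this via a commutative diagram while you write out the chain of equalities explicitly, but the content is identical.
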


\begin{proof}
Suppose that $f\colon Q_1\rightarrow Q_2$ is given by conjugation by $\ybold\in\Pbold$. Let $r\colon\Pbold\rightarrow P$ be the retraction provided by Lemma~\ref{lemma: retraction to dpta}, and consider the commutative diagram
\begin{equation*} 
\begin{gathered}
\xymatrix{
Q_1\ \ar@{^(->}[r]\ar[d]_{f=c_\ybold}  
        & P\ \ar@{^(->}[r]
        & \Pbold\ar[r]_{r}\ar[d]_{c_\ybold}
        & P\ar[d]^{c_{r(\ybold)}}\\
Q_2\  \ar@{^(->}[r] 
        & P\ \ar@{^(->}[r] 
        & \Pbold\ar[r]_{r}
        & P . 
}
\end{gathered}
\end{equation*}
Although we cannot fill in the rectangle, because conjugation by $\ybold$ may not take $P$ to~$P$, we do know that (by assumption) that $Q_1$ and $Q_2$ are contained in~$P$. Since $P\hookrightarrow \Pbold\xrightarrow{r} P$ is the identity map of~$P$, the compositions across the top and bottom rows corestrict to the identity maps on $Q_1$ and~$Q_2$, respectively. Therefore conjugation by $\ybold\in\Pbold$ and $r(\ybold)\in P$
induce the same map $f\colon Q_1\rightarrow Q_2$. 
\end{proof}

We now have all the tools we need to establish 
Proposition~\ref{proposition: uniqueness of relative dptas}. 

\begin{proof}[Proof of Proposition~\ref{proposition: uniqueness of relative dptas}]
Since $P_1$ and $P_2$ are both \pdash discretizations of~$\Pbold$, they are conjugate in~$\Pbold$. Choose $\ybold\in\Pbold$ such that $c_{\ybold}(P_1)=P_2$. It is possible that $c_{\ybold}$ does not stabilize~$Q$, so let 
$Q'=c_{\ybold}(Q)$. Then $Q$ and $Q'$ are both subgroups of~$P_2$, and $c_{\ybold}\colon Q\rightarrow Q'$. By 
Lemma~\ref{lemma: outer transporters}, there exists 
$x\in P_2$ such that $c_{x}=c_{\ybold}\colon Q\rightarrow Q'$. 
Define $\ybold'=\ybold\cdot x^{-1}$. Then $\ybold'$ still conjugates $P_1$ to $P_2$, but $\ybold'$ centralizes~$Q$. 
\end{proof}

In the remainder of this section, we give two applications
of Proposition~\ref{proposition: uniqueness of relative dptas}.
First, we prove that the outer automorphism group in $G$ of a \pdash discretization is the same as that of its closure. 
This is proved in \cite[Lemma~9.4]{BLO-Discrete} using a different point of view.
Second, we prove that for the purpose of understanding the mod~$p$ homology of classifying spaces, centralizers and 
normalizers can be computed either in a discrete \pdash toral group or a continuous \pdash toral group. These two results are the base cases for inductions to establish the corresponding results for chains of subgroups in Section~\ref{section: normalizers}. 

\begin{lemma} \label{lemma: outs for one group}
Let $P$ be a \pdash discretization of a \pdash toral subgroup $\Pbold$ of~$G$. Then $\Out_G(P)\cong\Out_G(\Pbold)$.
\end{lemma}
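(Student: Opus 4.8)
The plan is to build an explicit homomorphism $\phi\colon\Out_G(P)\to\Out_G(\Pbold)$ by restricting conjugations, and then check it is a bijection; the only nontrivial input will be Lemma~\ref{lemma: outer transporters}.

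First I would record two elementary facts. Since conjugation by any $g\in G$ is a homeomorphism of $G$, it commutes with taking closures, so $c_g(P)=P$ forces $c_g(\Pbold)=c_g(\Pbar)=\Pbar=\Pbold$; hence $N_G(P)\subgroupeq N_G(\Pbold)$. Moreover $P$ is dense in $\Pbold$ and the fixed-point set of a continuous self-map of the Hausdorff group $G$ is closed, so any $g$ centralizing $P$ centralizes $\Pbar=\Pbold$; hence $C_G(P)=C_G(\Pbold)$. Restricting conjugations therefore gives a homomorphism $N_G(P)/C_G(P)\to N_G(\Pbold)/C_G(\Pbold)$, and because the inner automorphisms of $\Pbold$ coming from $P\subgroupeq\Pbold$ are again inner in $\Pbold$, it descends to
\[
\phi\colon \Out_G(P)=N_G(P)/\bigl(P\cdot C_G(P)\bigr)\longrightarrow \Out_G(\Pbold)=N_G(\Pbold)/\bigl(\Pbold\cdot C_G(\Pbold)\bigr),
\]
where I use that $\Out_G(-)$ is the image of $N_G(-)$ in $\Out(-)$. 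Well-definedness is immediate from $P\subgroupeq\Pbold$ and $C_G(P)=C_G(\Pbold)$.

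For surjectivity, take $g\in N_G(\Pbold)$. Then $c_g(P)$ is a maximal discrete \pdash toral subgroup of $c_g(\Pbold)=\Pbold$, i.e. another \pdash discretization of $\Pbold$, so by Proposition~\ref{proposition: existence of maximal}(2) there is $y\in\Pbold$ with $c_y\bigl(c_g(P)\bigr)=P$; with the convention $c_g(x)=g^{-1}xg$ this says $gy\in N_G(P)$. Since $c_{gy}$ and $c_g$ differ by $c_y$ with $y\in\Pbold$, we get $\phi([gy])=[g]$, so $\phi$ is onto. For injectivity, suppose $g\in N_G(P)$ with $\phi([g])=1$, i.e. $c_g|_{\Pbold}=c_h|_{\Pbold}$ for some $h\in\Pbold$. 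Then $c_h(P)=c_g(P)=P$, so $c_h$ is a conjugation of $P$ induced by conjugation in $\Pbold$; by Lemma~\ref{lemma: outer transporters} (with $Q_1=Q_2=P$) the same automorphism of $P$ is induced by conjugation by some $h'\in P$. Hence $c_g|_P=c_{h'}|_P$, so $g\in h'\cdot C_G(P)\subgroupeq P\cdot C_G(P)$ and $[g]=1$ in $\Out_G(P)$.

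The main obstacle, and the one genuinely nontrivial step, is injectivity: it relies on Lemma~\ref{lemma: outer transporters} (ultimately on the noncanonical splitting of Lemma~\ref{lemma: retraction to dpta}) to replace a conjugation of $P$ that a priori only lives in $\Pbold$ by one realized inside $P$. Everything else is density-and-continuity bookkeeping, so I would present those verifications briskly.
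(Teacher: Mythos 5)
Your proposal is correct and follows essentially the same route as the paper: surjectivity by conjugating the discretization $c_g(P)$ back to $P$ inside $\Pbold$, and injectivity by invoking Lemma~\ref{lemma: outer transporters} to realize a conjugation of $P$ coming from $\Pbold$ by an element of $P$ itself. The only cosmetic difference is that the paper splits the injectivity check into two steps (first $N_G(P)\cap\Pbold\subgroupeq C_G(P)\cdot P$, then the general element of $N_G(P)\cap[C_G(\Pbold)\cdot\Pbold]$), whereas you phrase it directly in terms of restricted conjugation automorphisms; the substance is identical.
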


\begin{proof}
We want to prove that the natural map
\[
\Out_G(P)\definedas\frac{N_G(P)}{C_G(P)\cdot P}
\,\xrightarrow{\,\quad\, }\,
\frac{N_G(\Pbold)}{C_G(\Pbold)\cdot \Pbold}=:\Out_G(\Pbold)
\]
is an isomorphism. To show that it is an epimorphism, suppose that $\nbold\in N_G(\Pbold)$. Because $c_\nbold(P)$ and $P$ are both discretizations of~$\Pbold$, there exists $\xbold\in\Pbold$ such that $c_\xbold(c_\nbold P)=P$. Therefore $\nbold\cdot \xbold\in N_G(P)$,
and it represents the same class as $\nbold$ in $\Out_G(\Pbold)$. 

To show injectivity, first suppose that 
$n\in N_G(P) \cap \Pbold$.
We would like to show that $n$ is already in 
$C_G(P)\cdot P$.  However, Lemma~\ref{lemma: outer transporters} tells us that the automorphism of $P$ induced by $n$ can be induced by some $y\in P$. 
Hence $n\cdot y^{-1}=c\in C_G(P)$ and 
$n=c\cdot y\in C_G(P)\cdot P$, as required. 

To finish, suppose that 
$n\in N_G(P)\cap \left[C_G(\Pbold)\cdot \Pbold\right]$, say 
$n=\cbold\cdot\xbold$ with $\cbold\in C_G(\Pbold)=C_G(P)$
and $\xbold\in\Pbold$. Then $\xbold=n\cdot \cbold^{-1}$ normalizes~$P$. The previous argument shows that $\xbold\in C_G(P)\cdot P$, and hence $n=\cbold\cdot\xbold\in C_G(P)\cdot P$, as required.  
\end{proof}

\begin{remark}  \label{remark: infinite p-discretizations}
Observe that $N_\Pbold(P)=N_G(P) \cap \Pbold$, and the proof of 
Lemma~\ref{lemma: outs for one group} establishes that 
$N_G(P) \cap \Pbold=\left(\strut P\cdot C_G(P)\right)\cap\Pbold$. Since $C_G(P)\cap\Pbold=C_G(\Pbold)\cap\Pbold=Z(\Pbold)$, we have actually 
proved that if $P$ is a \pdash discretization of~$\Pbold$, then $N_\Pbold(P)=P\cdot Z(\Pbold)$. This gives a refinement to the discussion of Example~\ref{example: parametrization of dptas}: 
\pdash discretizations of $\Pbold$ are parametrized by $\Pbold/N_\Pbold(P)=\Pbold/(P\cdot Z(\Pbold))$. We recover the result that if $\Pbold$ is abelian ($Z(\Pbold)=\Pbold$), then the \pdash discretization is unique. 
Indeed, if the torus is central in $\Pbold$ then there is a unique \pdash discretization of~$\Pbold$, and otherwise there are infinitely many.
\end{remark}

For our final result of this section, note that if $\Qbold\subgroupeq\Pbold$ is an inclusion of \pdash toral subgroups, then both $N_{\Pbold}(\Qbold)$ and
$C_{\Pbold}(\Qbold)$ (and hence $Z(\Qbold)$) are \pdash toral (\cite[Lemma~A.3]{JMO}).  

\begin{proposition}   \label{proposition: discrete to continuous}
Let $Q\subgroupeq P$ be \pdash discretizations of \pdash toral groups $\Qbold\subseteq\Pbold$. Then 
$C_{P}(Q)$ is a \pdash discretization of $C_{\Pbold}(\Qbold)$, and
$N_{P}(Q)$ is a \pdash discretization of $N_{\Pbold}(\Qbold)$. 
\end{proposition}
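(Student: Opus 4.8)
The plan is to settle the centralizer statement first, using the (discontinuous) retraction $r\colon\Pbold\to P$ of Lemma~\ref{lemma: retraction to dpta}, and then to reduce the normalizer statement to it by means of Lemma~\ref{lemma: outer transporters}. I will use freely that, since conjugation in $\Pbold$ is continuous and $Q$ is dense in $\Qbold$, one has $C_{\Pbold}(Q)=C_{\Pbold}(\Qbold)$ and $N_{\Pbold}(Q)\subgroupeq N_{\Pbold}(\Qbold)$, that $Q\subgroupeq N_P(Q)$, and that (as recorded after Definition~\ref{defn: p-discretization}) a \pdash discretization of a \pdash toral group is exactly a maximal discrete \pdash toral subgroup of it. Since $C_{\Pbold}(\Qbold)$ and $N_{\Pbold}(\Qbold)$ are \pdash toral by \cite[Lemma~A.3]{JMO}, while $C_P(Q)=P\cap C_{\Pbold}(\Qbold)$ and $N_P(Q)=P\cap N_{\Pbold}(Q)$ are discrete \pdash toral subgroups of them, it is enough in each case to prove maximality.

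For the centralizer I would fix an $r$ as constructed in the proof of Lemma~\ref{lemma: retraction to dpta}, so that $\ker r=T_{p'}\times T_{\infty}$; the point is that $\ker r$ contains no nontrivial element of \pdash power order. A one-line computation ($r(\xbold)q=r(\xbold q)=r(q\xbold)=q\,r(\xbold)$ for $q\in Q\subgroupeq P$ and $\xbold\in C_{\Pbold}(Q)$) shows that $r$ restricts to a retraction $C_{\Pbold}(\Qbold)\to C_P(Q)$ of the inclusion. Now let $D$ be any discrete \pdash toral subgroup with $C_P(Q)\subgroupeq D\subgroupeq C_{\Pbold}(\Qbold)$. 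Every element of $D$ has \pdash power order, so $D\cap\ker r=\{1\}$ and $r|_{D}$ is injective; as $r$ is the identity on $C_P(Q)\subgroupeq D$, for $d\in D$ we get $r(r(d))=r(d)$, whence $d=r(d)\in C_P(Q)$. Thus $D=C_P(Q)$, which proves maximality.

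For the normalizer, let $D$ be a discrete \pdash toral subgroup with $N_P(Q)\subgroupeq D\subgroupeq N_{\Pbold}(\Qbold)$. Since $Q\subgroupeq N_P(Q)\subgroupeq D$ and $Q$ is a maximal discrete \pdash toral subgroup of $\Qbold$, we have $D\cap\Qbold=Q$. Each $d\in D$ normalizes $\Qbold$, so $c_d(Q)\subgroupeq D\cap\Qbold=Q$; applying the same to $d^{-1}$ and then applying $c_d$ gives $Q\subgroupeq c_d(Q)$, so $c_d(Q)=Q$ and $D\subgroupeq N_{\Pbold}(Q)$. By Lemma~\ref{lemma: outer transporters} with $Q_1=Q_2=Q$, every automorphism of $Q$ induced by conjugation in $\Pbold$ is induced by conjugation by an element of $P$, which necessarily lies in $N_P(Q)$; hence $N_{\Pbold}(Q)=C_{\Pbold}(\Qbold)\cdot N_P(Q)$. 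Writing $d=\cbold n$ with $\cbold\in C_{\Pbold}(\Qbold)$ and $n\in N_P(Q)\subgroupeq D$, we obtain $\cbold=dn^{-1}\in D\cap C_{\Pbold}(\Qbold)$; but $C_P(Q)\subgroupeq D\cap C_{\Pbold}(\Qbold)\subgroupeq C_{\Pbold}(\Qbold)$ and $D\cap C_{\Pbold}(\Qbold)$ is discrete \pdash toral, so the centralizer case (maximality of $C_P(Q)$) forces $D\cap C_{\Pbold}(\Qbold)=C_P(Q)$. Therefore $\cbold\in C_P(Q)\subgroupeq N_P(Q)$, so $d\in N_P(Q)$ and $D=N_P(Q)$.

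I expect the normalizer case to be the main obstacle: an element of $\Pbold$ normalizing $\Qbold$ need not normalize the chosen discretization $Q$, so $N_{\Pbold}(\Qbold)$ is in general strictly larger than $N_{\Pbold}(Q)$, and the clean retraction argument used for centralizers does not apply directly. The device above is to exploit that the enlarging group $D$ is itself discrete \pdash toral and contains $Q$, which forces $D\cap\Qbold=Q$ and thereby upgrades ``$D$ normalizes $\Qbold$'' to ``$D$ normalizes $Q$''; Lemma~\ref{lemma: outer transporters} and the already-proven centralizer case then close the argument. A minor further subtlety, on the centralizer side, is the need to pass through the non-topological retraction $r$ and use that its kernel has no nontrivial element of \pdash power order in order to conclude that $r$ is injective on discrete \pdash toral subgroups.
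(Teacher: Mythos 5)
Your proof is correct, but it takes a genuinely different route from the paper's. The paper argues ``from the outside in'': it chooses a \pdash discretization $D$ of $C_{\Pbold}(\Qbold)$, expands $D\cdot Q$ to a \pdash discretization $N$ of $N_{\Pbold}(\Qbold)$ and then to a \pdash discretization $P'$ of $\Pbold$, identifies $D=C_{P'}(Q)$ and $N=N_{P'}(Q)$ by maximality, and finally invokes Proposition~\ref{proposition: uniqueness of relative dptas} to conjugate $P'$ to $P$ by an element of $C_{\Pbold}(Q)$, which transports the conclusion to $C_P(Q)$ and $N_P(Q)$. You instead prove maximality directly inside the given $P$: for centralizers you observe that the retraction $r$ of Lemma~\ref{lemma: retraction to dpta} carries $C_{\Pbold}(\Qbold)=C_{\Pbold}(Q)$ into $C_P(Q)$ and that its kernel $T_{p'}\times T_{\infty}$ meets any discrete \pdash toral group trivially, so no discrete \pdash toral overgroup of $C_P(Q)$ can exist; for normalizers your key observation is that a discrete \pdash toral overgroup $D$ of $N_P(Q)$ inside $N_{\Pbold}(\Qbold)$ satisfies $D\cap\Qbold=Q$ and hence automatically normalizes $Q$ itself, after which Lemma~\ref{lemma: outer transporters} gives the factorization $N_{\Pbold}(Q)=C_{\Pbold}(\Qbold)\cdot N_P(Q)$ and the centralizer case closes the argument. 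Your route avoids Proposition~\ref{proposition: uniqueness of relative dptas} and the auxiliary discretization $P'$ altogether, at the modest cost of leaning on the explicit construction (not merely the statement) of Lemma~\ref{lemma: retraction to dpta} to know that $\ker r$ contains no nontrivial element of \pdash power order; it also yields the slightly sharper fact that $C_P(Q)=r\bigl(C_{\Pbold}(\Qbold)\bigr)$. The paper's route is more uniform with the rest of Section~\ref{section: relative dptas}, where everything is reduced to conjugating discretizations into one another; both arguments ultimately rest on Lemma~\ref{lemma: outer transporters}.
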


\begin{proof}
Let $D$ be a \pdash discretization of~$C_{\Pbold}(\Qbold)$; we note that $Z(Q)$ is necessarily contained in~$D$, since $Z(\Qbold)$ has only one \pdash discretization. 
Since $D$ commutes with~$Q$, their product $D\cdot Q$ is a discrete \pdash toral subgroup of 
$N_{\Pbold}(\Qbold)$. We can expand 
$D\cdot Q$ to a \pdash discretization $N$ of 
$N_{\Pbold}(\Qbold)$, 
and then enlarge $N$ to a \pdash discretization $P'$ of~$\Pbold$. So we have 
compatible \pdash discretizations 
\begin{equation*} 
\begin{gathered}
\xymatrix{
        \Largestrut D \ar[r]\ar@{^(->}[d] 
        & D\cdot Q \ar[r]
        & \Largestrut N\ar[r]\ar@{^(->}[d]
        & \Largestrut P'\ar@{^(->}[d]\\
        C_{\Pbold}(\Qbold) \ar[rr]
        && N_{\Pbold}(\Qbold)\ar[r]
        & \Pbold.
}
\end{gathered}
\end{equation*}

By construction, $Q\subseteq N\cap \Qbold$, and since $Q$ is a maximal discrete \pdash toral subgroup of~$\Qbold$, we know that $Q=N\cap\Qbold$. Therefore $N$ normalizes~$Q$ (because $N$ normalizes both itself and $\Qbold$), so $N\subgroupeq N_{P'}(Q)\subgroupeq N_\Pbold(\Qbold)$. But $N$ is a maximal discrete \pdash toral subgroup of $N_\Pbold(\Qbold)$, so in fact $N=N_{P'}(Q)$. 
Similarly, we have
$D\subseteq C_{P'}(Q)\subseteq C_{\Pbold}(\Qbold)$ and maximality gives us
$D=C_{P'}(Q)$.

However, we have another \pdash discretization of~$\Pbold$, namely~$P$. Notice that $Q$ is contained both $P$ (by assumption) and~$P'$ (by construction), 
so Proposition~\ref{proposition: uniqueness of relative dptas} 
tells us that there exists $\ybold\in C_{\Pbold}(Q)$ with $c_{\ybold}(P')=P$. 
We obtain the two commutative diagrams
\begin{equation*} 
\begin{gathered}
\xymatrix{
\Largestrut C_{P'}(Q)\ar[r]_{c_{\ybold}}\ar@{^(->}[d]
        & \Largestrut C_{P}(Q)\ar@{^(->}[d]\\
C_{\Pbold}(\Qbold)\ar[r]_{c_{\ybold}}
        & C_{\Pbold}(\Qbold)
}
\hspace{5em}
\xymatrix{
\Largestrut N_{P'}(Q)\ar[r]_{c_{\ybold}}\ar@{^(->}[d]
        & \Largestrut N_{P}(Q)\ar@{^(->}[d]\\
N_{\Pbold}(\Qbold)\ar[r]_{c_{\ybold}}
        & N_{\Pbold}(\Qbold).
}
\end{gathered}
\end{equation*}
The left vertical arrows are \pdash discretizations by construction, therefore the right vertical arrows are \pdash discretizations as well. 
\end{proof}

\begin{corollary}   \label{cor: center is snug}
If $P$ is a \pdash discretization of~$\Pbold$, 
then $Z(P)$ is a \pdash discretization of~$Z(\Pbold)$. 
\end{corollary}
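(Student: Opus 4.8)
The plan is to deduce this immediately from Proposition~\ref{proposition: discrete to continuous} by taking the special case $Q = P$ and $\Qbold = \Pbold$. That proposition requires $Q \subgroupeq P$ to be \pdash discretizations of \pdash toral groups $\Qbold \subseteq \Pbold$; the diagonal instance $P \subgroupeq P$ and $\Pbold \subseteq \Pbold$ trivially satisfies this hypothesis, since by assumption $P$ is a \pdash discretization of $\Pbold$. The conclusion of Proposition~\ref{proposition: discrete to continuous} is that $C_P(Q)$ is a \pdash discretization of $C_{\Pbold}(\Qbold)$, so specializing gives that $C_P(P) = Z(P)$ is a \pdash discretization of $C_{\Pbold}(\Pbold) = Z(\Pbold)$, which is exactly the claim. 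So the entire argument is: invoke Proposition~\ref{proposition: discrete to continuous} with $Q = P$, $\Qbold = \Pbold$, and observe that the centralizer of a group in itself is its center.

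There is essentially no obstacle. The only point worth a word is that one must know $C_{\Pbold}(\Pbold)$ is itself \pdash toral so that the phrase ``\pdash discretization of $Z(\Pbold)$'' makes sense — but this is already recorded just before Proposition~\ref{proposition: discrete to continuous} via \cite[Lemma~A.3]{JMO}, and it is also built into the conclusion of that proposition. One could equally feed the diagonal case into the normalizer half of Proposition~\ref{proposition: discrete to continuous}, but that only yields the vacuous statement that $P = N_P(P)$ is a \pdash discretization of $\Pbold = N_{\Pbold}(\Pbold)$, so it is the centralizer half that carries the content here.
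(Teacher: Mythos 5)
Your proposal is exactly the paper's proof: the authors also apply Proposition~\ref{proposition: discrete to continuous} with $Q=P$ and read off the centralizer statement as $Z(P)=C_P(P)$ being a \pdash discretization of $Z(\Pbold)=C_{\Pbold}(\Pbold)$. Correct and identical in approach.
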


\begin{proof}
Apply Proposition~\ref{proposition: discrete to continuous} with 
$P=Q$. 
\end{proof}


\medskip

\section{Chains of \pdash centric, \pdash stubborn subgroups of~$G$}
\label{section: what are the chains}

In \cite{BLO-Discrete}, 
Broto, Levi and Oliver construct a saturated fusion system associated to a compact Lie group~$G$, denoted $\Fcal_S(G)$, where $S$ is a maximal discrete \pdash toral subgroup of~$G$. It is a category whose objects are the subgroups of~$S$ and whose morphisms are homomorphisms induced by conjugation in~$G$. 
The purpose of this section is to
compare the collection of $\Fcal_S(G)$-centric, $\Fcal_S(G)$-radical subgroups (Definition~\ref{defn: centric, radical, stubborn}) with the analogous collection of continuous \pdash toral subgroups, namely the \pdash centric, \pdash stubborn subgroups. In our forthcoming normalizer decomposition for \pdash local compact groups \cite{BCGLS-Main}, the indexing category will be conjugacy classes of chains of $\Fcal_S(G)$-centric, $\Fcal_S(G)$-radical subgroups of~$S$. In this section, we show that when $\pi_0G$ is a \pdash group, the set of such conjugacy classes is in one-to-one correspondence with conjugacy classes of chains of \pdash centric, \pdash stubborn subgroups of~$G$ (Theorem~\ref{theorem: identification of chains}). Further, even when $\pi_0G$ is not a \pdash group, there is still an injection from the first set to the second. 

First we need some definitions, taken from the definitions for a fusion system (see \cite[Def.~2.6 and pp~380]{BLO-Discrete} and \cite[Def.~1.3]{JMO}). For streamlined notation, we suppress both $G$ and the maximal discrete \pdash toral subgroup~$S$.

\begin{definition}     \label{defn: centric, radical, stubborn}
Fix a compact Lie group~$G$ and a maximal discrete \pdash toral 
subgroup $S\subgroupeq G$. 
\begin{enumerate}
    \item For discrete \pdash toral groups
         \label{item: discrete}
\begin{enumerate}
\item A subgroup $P\subgroupeq S$ is \defining{\fusion-centric} if whenever $Q\subgroupeq S$ is $G$-conjugate to~$P$, we have $C_S(Q)=Z(P)$. (In particular, $C_S(P)=Z(P)$.) 
\item A subgroup $P\subgroupeq S$ is \defining{\fusion-radical} if $\Out_{G}(P)\definedas N_G(P)/\left[C_G(P)\cdot P\right]$ 
    has no nontrivial normal subgroups. 
\end{enumerate}
\item For continuous \pdash toral groups
     \label{item: continuous}
\begin{enumerate} 
\item
\label{item: continuous pcentric in G}
    A \pdash toral subgroup $\Pbold\subgroupeq G$ is
    \defining{\pdash centric in~$G$} if $Z(\Pbold)$ is a maximal \pdash toral subgroup of $C_G(\Pbold)$.
\item 
\label{item: continuous pstubborn in G}
    A \pdash toral subgroup $\Pbold\subgroupeq G$ is
    \defining{\pdash stubborn in~$G$} if $N_G(\Pbold)/\Pbold$ is finite
    and has no nontrivial normal \pdash subgroups.
\end{enumerate}
\end{enumerate}
\end{definition}


Note that although the concepts of $\Fcal$-centric and $\Fcal$-radical depend on both $S$ and~$G$, we omit them from the notation because $S$ and $G$ are always clear from context, and the omission gives a slimmer notation. We also observe that the properties of being \pdash centric and \pdash stubborn are closed under $G$-conjugation since $c_g(C_G(\Pbold))=C_G(c_g(\Pbold))$ and $c_g(N_G(\Pbold))=N_G(c_g(\Pbold))$ for any $g\in G$.

\begin{remark}\label{rmk:radical-vs-stubborn}
A \pdash toral subgroup $\Pbold\subgroupeq G$ is \defining{\pdash radical} if $N_G(\Pbold)/\Pbold$ has no nontrivial normal \pdash toral subgroups (no assumption that $N_G(\Pbold)/\Pbold$ is finite). For finite groups, the collection of \pdash radical subgroups was introduced by Bouc. In the compact Lie case, 
the \pdash radical subgroups were featured in \cite{Libman-Minami}.

However, when $\Pbold\subgroupeq G$ is \pdash centric, then \pdash radical and \pdash stubborn become equivalent, because
$\Out_G(\Pbold)$ is finite  (\cite[Lemma~9.4]{BLO-Discrete}) and
the short exact sequence 
\[
0\rightarrow C_G(\Pbold)/Z(\Pbold)\rightarrow N_G(\Pbold)/\Pbold \rightarrow \Out_G(\Pbold)\rightarrow 0
\]
has $C_G(\Pbold)/Z(\Pbold)$ finite of order prime to~$p$. 
\end{remark}

With the necessary vocabulary in hand, we are able to state the main theorem for this section. 

\begin{theorem}
\label{theorem: identification of chains}
\chainstheoremtext
\end{theorem}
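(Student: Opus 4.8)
The plan is to build the map, show it is well-defined, prove injectivity, and finally prove surjectivity under the hypothesis that $\pi_0 G$ is a \pdash group.

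First I would construct the map. Given a chain $P_0\subgroupeq\cdots\subgroupeq P_k\subgroupeq S$ of \fusion-centric, \fusion-radical subgroups, apply the closure operation to get $\Pbar_0\subgroupeq\cdots\subgroupeq\Pbar_k\subgroupeq\Sbold$. The first task is to check that each $\Pbar_i$ is \pdash toral (immediate, as the closure of a discrete \pdash toral group in a compact Lie group is \pdash toral), \pdash centric, and \pdash stubborn. For \pdash centric: $P_i$ being \fusion-centric gives $C_S(Q)=Z(P_i)$ for all $G$-conjugates, and one must upgrade this to the statement that $Z(\Pbar_i)$ is maximal \pdash toral in $C_G(\Pbar_i)$; here I would use that $P_i$ is snugly embedded (being a subgroup of $S$, which is a \pdash discretization of $\Sbold$, though one needs $P_i$ snug in $\Pbar_i$ — this is where \fusion-centricity is used, via the characterization of \fusion-centric subgroups in terms of their closures from \cite{BLO-Discrete}), together with Corollary~\ref{cor: center is snug} and Proposition~\ref{proposition: discrete to continuous}. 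For \pdash stubborn: I would show $\Out_G(P_i)\cong\Out_G(\Pbar_i)$ by Lemma~\ref{lemma: outs for one group}, and then use that \fusion-centric plus \fusion-radical translates (via Remark~\ref{rmk:radical-vs-stubborn} and the fact that $C_G(\Pbar_i)/Z(\Pbar_i)$ is finite of order prime to $p$ for \pdash centric subgroups) into $N_G(\Pbar_i)/\Pbar_i$ finite with no normal \pdash subgroups. Well-definedness on $G$-conjugacy classes is clear since closure commutes with conjugation.

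Next, injectivity. Suppose two chains $P_* $ and $P'_*$ in $S$ have $G$-conjugate closure chains, say $c_g(\Pbar_i) = \Pbar'_i$ for all $i$. Then $c_g(S)$ and $S$ are both \pdash discretizations of $\Sbold$ (after first conjugating so that $c_g(\Sbold)=\Sbold$, using that $\Pbar_k\subgroupeq\Sbold$ and all maximal \pdash toral subgroups are conjugate), so by Proposition~\ref{proposition: uniqueness of relative dptas}, applied with $Q = c_g(P_k)$ contained in both $c_g(S)$ and $S$ — wait, more carefully, I want $c_g(P_k)\subgroupeq S$; since $c_g(P_k)$ and $P'_k$ are both \pdash discretizations of $\Pbar'_k$ sitting inside $S$, Lemma~\ref{lemma: outer transporters} lets me adjust $g$ by an element of $\Pbar'_k$ to arrange $c_g(P_i) = P'_i$ for all $i$ simultaneously (working down the chain, using that each $P_i$ is the unique maximal discrete \pdash toral subgroup of $\Pbar_i$ contained in $S$). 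Hence $P_*$ and $P'_*$ are already $G$-conjugate.

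Finally, surjectivity when $\pi_0 G$ is a \pdash group. Given a chain $\Pbold_0\subgroupeq\cdots\subgroupeq\Pbold_k\subgroupeq\Sbold$ of \pdash toral, \pdash centric, \pdash stubborn subgroups, I would inductively choose compatible \pdash discretizations $P_i\subgroupeq P_{i+1}$ with $P_k\subgroupeq S$: start with $P_0$ a \pdash discretization of $\Pbold_0$, expand to $P_1\supgroupeq P_0$ a \pdash discretization of $\Pbold_1$ (possible by Proposition~\ref{proposition: existence of maximal}), and so on up to $P_k$, then conjugate the whole chain inside $\Sbold$ so that $P_k\subgroupeq S$. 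Then I must verify each $P_i$ is \fusion-centric and \fusion-radical. For \fusion-radical: $\Out_G(P_i)\cong\Out_G(\Pbold_i)$ by Lemma~\ref{lemma: outs for one group}, and \pdash stubborn says this finite group has no normal \pdash subgroups. For \fusion-centric: one needs $C_S(Q)=Z(P_i)$ for all $G$-conjugates $Q\subgroupeq S$; this should follow from \pdash centricity of $\Pbold_i$ together with Proposition~\ref{proposition: discrete to continuous} (giving $C_S(P_i)$ as a \pdash discretization of $C_{\Sbold}(\Pbold_i)$) — and \emph{this is the one place the hypothesis $\pi_0 G$ is a \pdash group enters}, because it forces $C_G(\Pbold_i)$ to have no components of order prime to $p$ outside the torus, so that $C_G(\Pbold_i) = Z(\Pbold_i)$ when $\Pbold_i$ is \pdash centric, whence $C_S(P_i) = Z(P_i)$.

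I expect the main obstacle to be the verification, in both the well-definedness and the surjectivity steps, that \fusion-centric corresponds precisely to \pdash centric under closure — this requires carefully tracking how centralizers in $S$, in $\Sbold$, and in $G$ interact, and is exactly where the subtlety of the $\pi_0 G$ hypothesis manifests. The conjugacy bookkeeping for injectivity (getting a \emph{single} $g$ to work for the whole chain) is routine once Proposition~\ref{proposition: uniqueness of relative dptas} and Lemma~\ref{lemma: outer transporters} are in hand.
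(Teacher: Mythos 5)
Your overall architecture matches the paper's: define the map by closure, verify \pdash centric and \pdash stubborn for the image after first establishing snugness, prove injectivity by an induction along the chain using Proposition~\ref{proposition: uniqueness of relative dptas}, and prove surjectivity by producing a simultaneous \pdash discretization inside $S$ (as in Lemma~\ref{lemma: compatible chains of dptas}) and then checking \fusion-centricity and \fusion-radicality. However, there is a genuine gap in the surjectivity step, and it sits exactly where the hypothesis on $\pi_0G$ must enter.

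You assert that \fusion-radicality of $P_i$ follows from \pdash stubbornness of $\Pbold_i$ simply via the isomorphism $\Out_G(P_i)\cong\Out_G(\Pbold_i)$ of Lemma~\ref{lemma: outs for one group}. But \pdash stubborn is a condition on $N_G(\Pbold_i)/\Pbold_i$, whereas \fusion-radical is a condition on the further quotient $\Out_G(\Pbold_i)=N_G(\Pbold_i)/\left[C_G(\Pbold_i)\cdot\Pbold_i\right]$, and a quotient of a group with no nontrivial normal \pdash subgroups can perfectly well have one (e.g.\ $\Sigma_3\rightarrow\Z/2$ for $p=2$). To rule this out one must control the kernel $C_G(\Pbold_i)/Z(\Pbold_i)$ of the sequence \eqref{eq: normalizer versus Out}: the paper uses that $\pi_0G$ being a \pdash group forces $C_G(\Pbold_i)$ to be \pdash toral, so that this kernel is a finite \pdash group and the preimage of any nontrivial normal \pdash subgroup of $\Out_G(\Pbold_i)$ is a nontrivial normal \pdash subgroup of $N_G(\Pbold_i)/\Pbold_i$, contradicting \pdash stubbornness. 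So the $\pi_0G$ hypothesis belongs to the \fusion-radical verification, and you have placed it in the wrong step: the \fusion-centric verification needs no hypothesis on $\pi_0G$, since \pdash centricity alone makes $C_G(\Pbold_i)/Z(\Pbold_i)$ finite of order prime to~$p$ (finiteness coming from \pdash stubbornness), so the \pdash toral group $C_\Sbold(\Pbold_i)/Z(\Pbold_i)$ injects into it and must be trivial, giving $C_\Sbold(\Pbold_i)=Z(\Pbold_i)$ and hence $C_S(P_i)=Z(P_i)$ by maximality of \pdash discretizations. A smaller point: in your injectivity paragraph the correct tool is Proposition~\ref{proposition: uniqueness of relative dptas} applied working \emph{up} the chain from $P_0$, using the centralizer condition to preserve the part already matched (as in Lemma~\ref{lemma: monomorphism}); Lemma~\ref{lemma: outer transporters} and ``working down the chain'' do not supply an element that fixes the larger groups while adjusting the smaller ones.
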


The first task is to show that the map of 
Theorem~\ref{theorem: identification of chains} actually exists. 
That is, we need to establish that the closure of a discrete \pdash toral subgroup of $S$ that is \fusion-centric and \fusion-radical is \pdash centric and \pdash stubborn. (It is certainly \pdash toral.) 
To use the results of Section~\ref{section: relative dptas}, we need to know that the discrete \pdash toral groups we are dealing with are snugly embedded. The following lemma can be found in {\cite[Corollary 3.5, Lemma 9.9]{BLO-Discrete}}, but we give an elementary proof here that does not use the bullet construction. 

\begin{lemma}
\label{lemma: bullet-snugly-embedded}
Let $G$ be a compact Lie group with maximal discrete
\pdash toral subgroup~$S$, and let $P$ be a subgroup of~$S$.
If $P$ is \fusion-centric and \fusion-radical, then $P$ is snugly embedded. 
\end{lemma}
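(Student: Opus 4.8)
The plan is to prove the contrapositive in spirit: assume $P$ is \fusion-centric and \fusion-radical, and show that $P$ cannot be a \emph{proper} subgroup of a \pdash discretization of its own closure $\Pbar$; since $P$ is always contained in \emph{some} \pdash discretization of $\Pbar$ (by Proposition~\ref{proposition: existence of maximal}, applied to $\Pbar$), this forces $P$ to be a maximal discrete \pdash toral subgroup of $\Pbar$, i.e. snugly embedded. So first I would fix a \pdash discretization $\Pwiggle$ of $\Pbar$ with $P\subgroupeq\Pwiggle$, and aim to show $P=\Pwiggle$.

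The key observation is that $P$ is normal in $\Pwiggle$. Indeed $\Pwiggle\subgroupeq\Pbar=\closure{P}$, so every element of $\Pwiggle$ normalizes $\closure{P}$; I then want to say it normalizes $P$ itself. For this I would use the maximality of $S$: after conjugating I may assume $\Pwiggle\subgroupeq S$ (any discrete \pdash toral subgroup of $G$ is contained in a conjugate of $S$, and \fusion-centric/\fusion-radical are conjugation-invariant in the appropriate sense). For $x\in\Pwiggle$, conjugation $c_x$ sends $P$ to $c_x(P)\subgroupeq S$, a subgroup $G$-conjugate to $P$; moreover $c_x(P)$ and $P$ are both \pdash discretizations of $c_x(\closure P)=\closure P$, hence both are maximal discrete \pdash toral subgroups of the \emph{same} group $\closure P$, so they have the same cardinality/rank data. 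I would like to upgrade this to $c_x(P)=P$. One clean route: $P$ and $c_x(P)$ are conjugate in $\closure P$ by Proposition~\ref{proposition: existence of maximal}(2), and in fact (as recorded after Example~\ref{example: parametrization of dptas}) by an element of the maximal torus $\Tbold$ of $\closure P$; since $\Tbold\subgroupeq\closure P$ and the torsion of $\Tbold$ is $T_p\subgroupeq P\cap c_x(P)$ by Lemma~\ref{lemma: discretizations contain T_p}, one checks the relevant torus element centralizes enough of $P$ to conclude $c_x(P)=P$ — alternatively, invoke Lemma~\ref{lemma: outer transporters} with $\closure P$ in the role of $\Pbold$ to realize $c_x|_P$ by conjugation by an element of $P$, which already gives $c_x(P)\subgroupeq P$, hence equality by finiteness of $\pi_0$. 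Either way, $\Pwiggle\subgroupeq N_G(P)$, so $P\trianglelefteq\Pwiggle$.

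Now I run the \fusion-radical hypothesis. The quotient $\Pwiggle/P$ is a finite \pdash group (both $\Pwiggle$ and $P$ are discrete \pdash toral with the same torus, namely $T_p=$ torsion of $\Tbold$, so the quotient is a quotient of the finite \pdash groups $\pi_0$). It maps into $\Out_G(P)=N_G(P)/(C_G(P)\cdot P)$: the image is the subgroup generated by the classes of elements of $\Pwiggle$. I claim this image is a \emph{normal} \pdash subgroup of $\Out_G(P)$. It is a \pdash subgroup since it is a quotient of $\Pwiggle/P$. For normality, note $\Pwiggle$ is characterized inside $\Pbar=\closure P$ — but more usefully, by Remark~\ref{remark: infinite p-discretizations} one has $N_{\closure P}(P)=P\cdot Z(\closure P)$, and since $Z(\closure P)\subgroupeq C_G(P)$ this shows the image of $N_{\closure P}(P)$ in $\Out_G(P)$ is trivial; so one has to be a little careful here. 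The correct statement to extract is: the image of $\Pwiggle$ in $\Out_G(P)$ is exactly the image of $\closure P$ under $N_G(\closure P)=N_G(P)\cdot$(something) — and here I would instead argue via $\Out_G(\closure P)$: since $\closure P$ is normal in $N_G(\closure P)$, and by Lemma~\ref{lemma: outs for one group} $\Out_G(P)\cong\Out_G(\closure P)$, the subgroup of $\Out_G(P)$ coming from the self-normalizing continuous data is trivial. That actually makes $\Pwiggle/P$ inject into the kernel of an iso, forcing $\Pwiggle=P\cdot C_G(P)\cap\Pwiggle$; combined with $C_G(P)\cap\closure P = Z(\closure P)$ whose \pdash discretization is $Z(P)\subgroupeq P$, we get $\Pwiggle=P$.

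The \textbf{main obstacle} I anticipate is precisely this last step: controlling the image of $\Pwiggle$ (equivalently, of the torus-translation elements that conjugate one \pdash discretization to another) inside $\Out_G(P)$, and showing it is simultaneously a \pdash group and normal, so that \fusion-radicality kills it. The cleanest fix is likely to route everything through Lemma~\ref{lemma: outs for one group} ($\Out_G(P)\cong\Out_G(\closure P)$) together with Remark~\ref{remark: infinite p-discretizations} ($N_{\closure P}(P)=P\cdot Z(\closure P)$), which together say that elements of $\closure P$ — in particular of $\Pwiggle$ — act trivially on $P$ modulo $C_G(P)\cdot P$; that immediately gives $\Pwiggle\subgroupeq C_G(P)\cdot P$, and then intersecting with $\closure P$ and using $C_G(P)\cap\closure P=Z(\closure P)$ (with unique \pdash discretization $Z(P)$) yields $\Pwiggle=P$ without ever needing \fusion-radicality — suggesting \fusion-centricity alone may suffice, which I would double-check against the precise hypotheses before finalizing.
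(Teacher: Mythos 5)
Your overall strategy (embed $P$ in a \pdash discretization $\Pwiggle$ of $\Pbar$, push the ``excess'' into $\Out_G(P)$, and kill it with \fusion-radicality) is the same as the paper's, but the execution is circular at every key step. The claim that ``$c_x(P)$ and $P$ are both \pdash discretizations of $\closure{P}$'' assumes that $P$ is a maximal discrete \pdash toral subgroup of its own closure --- which is precisely the definition of snugly embedded, i.e., the conclusion of the lemma. The same problem recurs in your ``cleanest fix'': Lemma~\ref{lemma: outs for one group} and Remark~\ref{remark: infinite p-discretizations} both have as hypothesis that $P$ \emph{is} a \pdash discretization of $\Pbold$, so neither may be applied to $P$ here. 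Relatedly, Lemma~\ref{lemma: outer transporters} produces a conjugating element in the discretization $\Pwiggle$, not in $P$, so it does not give $c_x(P)\subgroupeq P$; indeed $P$ need not be normal in $\Pwiggle$, and the paper never claims this. The clearest symptom of the circularity is your closing suggestion that \fusion-centricity alone might suffice: the chain of reasoning that leads there is exactly the one that presupposes snugness.

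The paper's proof avoids all of this by working with $N_Q(P)$ rather than with all of $Q$ (where $Q$ is a discretization of $\Pbar$ arranged, via Proposition~\ref{proposition: uniqueness of relative dptas}, to satisfy $P\subgroupeq Q\subgroupeq S$ for the \emph{chosen}~$S$ --- your ``after conjugating'' step also needs this care, since the conjugation must fix~$P$). \fusion-centricity enters to give $C_Q(P)\subgroupeq C_S(P)=Z(P)$, so that $N_Q(P)/P$ injects into $\Out_G(P)$; normality of the image is proved by applying Proposition~\ref{proposition: uniqueness of relative dptas} to the two discretizations $Q$ and $c_g(Q)$ of $\Pbold$, which is legitimate because $Q$ --- unlike $P$ --- is a discretization by construction; \fusion-radicality then forces $N_Q(P)=P$; and finally \cite[Lemma~1.8]{BLO-Discrete} (normalizers grow in discrete \pdash toral groups) converts $N_Q(P)=P$ into $Q=P$. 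This last step, which your proposal omits entirely, is essential: without it one only knows that $P$ is self-normalizing in $Q$, not that $P=Q$.
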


\begin{proof}
Let $\Pbold=\Pbar$, and expand $P$ to a \pdash discretization $Q'$ of~$\Pbold$. To prove that $P$ is snugly embedded, we would like to prove that $Q'=P$. Expand $Q'$ further to a \pdash discretization $S'$ of~$\Sbold$, so that we have 
$P\subseteq Q'\subgroupeq S'\subgroupeq \Sbold$. Using Proposition~\ref{proposition: uniqueness of relative dptas}
with $P\subgroupeq S\cap S'$, choose $\ybold\in C_\Sbold(P)$ such that $c_\ybold(S')=S$, and let $Q=c_\ybold(Q')$. Now we have $P\subseteq Q\subgroupeq S\subgroupeq \Sbold$ and $Q$ is a \pdash discretization of~$\Pbold$, and our goal has become to prove $Q=P$.

Consider the homomorphism 
\begin{equation}   \label{eq: Q to G}
N_Q(P)/Z(P)\rightarrow N_G(P)/C_G(P).
\end{equation}
Because $P$ is \fusion-centric by assumption, $C_S(P)=Z(P)$. Therefore the centralizer of~$P$ in $Q\subgroupeq S$ is $Z(P)$ as well, and \eqref{eq: Q to G} is a monomorphism. We assert that the image is a normal subgroup
of~$N_G(P)/C_G(P)$. To prove this, we must take an element $g\in N_G(P)$ and prove that we can adjust $g$ by an element of $x\in C_G(P)$ so that $g\cdot x$ normalizes~$N_Q(P)$. Given that $g\cdot x$ would certainly normalize~$P$, it is sufficient to construct $x$ so that $g\cdot x$ normalizes~$Q$. 

Let $Q''=c_g(Q)$. Because $g\in N_G(P)\subgroupeq N_G(\Pbold)$, the groups $Q''$ and $Q$ are both \pdash discretizations of~$\Pbold$, and we have $P\subgroupeq Q\cap Q''$. Proposition~\ref{proposition: uniqueness of relative dptas} gives us an element $x\in C_\Pbold(P)$ such that 
$c_x(Q'')=Q$. Therefore $g\cdot x$
normalizes both $Q$ and~$P$ and we conclude that 
\eqref{eq: Q to G} is the inclusion of a normal subgroup. Taking the quotient on both sides by~$P$, we find that $N_Q(P)/P$ is a normal subgroup of $\Out_G(P)$. 

However, we have assumed that $P$ is \fusion-radical, meaning that $\Out_G(P)$ has no nontrivial \pdash subgroups. Therefore $N_Q(P)/P$ must be the trivial group, that is, $N_Q(P)=P$. Because $P$ and $Q$ are discrete \pdash toral groups, $N_Q(P)=P$ implies that the inclusion of $P$ into $Q$ cannot be proper \cite[Lemma~1.8]{BLO-Discrete}, so $P=Q$. Hence $P$ is a maximal discrete \pdash toral subgroup of $\Pbold$, as required. 
\end{proof}

Now that we know that \fusion-centric and \fusion-radical subgroups of $G$ must be snug, we can 
use the results of Section~\ref{section: relative dptas}.
Our next proposition shows that the map of Theorem~\ref{theorem: identification of chains} can be defined. 
That is, we show that the closure of an 
\fusion-centric and \fusion-radical discrete \pdash toral subgroup is in fact \pdash centric and \pdash stubborn. (See also the argument given in \cite[Prop~8.4 and Lemma~9.6]{BLO-Discrete} for \pdash centricity.)

\begin{proposition}   \label{proposition: p-stubborn subgroups}
If $P$ is a \pdash discretization of $\Pbold$, and $P$ is \fusion-centric and \fusion-radical, then $\Pbold$ is \pdash centric and \pdash stubborn.
\end{proposition}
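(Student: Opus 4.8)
The plan is to establish the two conclusions separately, proving \pdash centricity first and then deducing \pdash stubbornness from it. The hypotheses to exploit are that $P$ is snugly embedded (Lemma~\ref{lemma: bullet-snugly-embedded}), so that the \pdash discretization machinery of Section~\ref{section: relative dptas} applies; that $\Out_G(P)\cong\Out_G(\Pbold)$ (Lemma~\ref{lemma: outs for one group}); and that $Z(\Pbold)$ is itself \pdash toral, with \pdash discretization $Z(P)$ (\cite[Lemma~A.3]{JMO} and Corollary~\ref{cor: center is snug}). Note also that $Z(P)\subgroupeq Z(\Pbold)$ since an element centralizing the dense subgroup $P$ centralizes $\Pbold=\Pbar$.

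For \pdash centricity I would prove the slightly stronger statement that \emph{every} \pdash toral subgroup $\Cbold$ of $C_G(\Pbold)=C_G(P)$ is contained in $Z(\Pbold)$; as $Z(\Pbold)$ is one such subgroup, it is then the maximal one, which is the definition of \pdash centric. Given $\Cbold$, choose a \pdash discretization $C$ of $\Cbold$ (Proposition~\ref{proposition: existence of maximal}). Since $C$ centralizes $P$, the product $CP$ is a subgroup, and it is discrete \pdash toral, being an extension of $P$ by the discrete \pdash toral group $C/(C\cap P)$. Pick $g\in G$ with $c_g(CP)\subgroupeq S$. Then $c_g(P)\subgroupeq S$ is $G$-conjugate to $P$, while $c_g(C)\subgroupeq S$ still centralizes $c_g(P)$, so $c_g(C)\subgroupeq C_S(c_g(P))$. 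By \fusion-centricity of $P$, $C_S(c_g(P))=Z(c_g(P))$, hence $c_g(C)\subgroupeq Z(c_g(P))$; conjugating back by $g^{-1}$ gives $C\subgroupeq Z(P)\subgroupeq Z(\Pbold)$, and taking closures (with $\overline{C}=\Cbold$) yields $\Cbold\subgroupeq Z(\Pbold)$, as needed.

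For \pdash stubbornness, with \pdash centricity in hand, I would invoke Remark~\ref{rmk:radical-vs-stubborn}: $\Out_G(\Pbold)$ is finite (\cite[Lemma~9.4]{BLO-Discrete}) and there is a short exact sequence
\[
1\longrightarrow C_G(\Pbold)/Z(\Pbold)\longrightarrow N_G(\Pbold)/\Pbold\longrightarrow \Out_G(\Pbold)\longrightarrow 1
\]
in which $C_G(\Pbold)/Z(\Pbold)$ is finite of order prime to~$p$; hence $N_G(\Pbold)/\Pbold$ is finite. If $N\trianglelefteq N_G(\Pbold)/\Pbold$ is a normal \pdash subgroup, then $N\cap\bigl(C_G(\Pbold)/Z(\Pbold)\bigr)=1$ for order reasons, so $N$ maps injectively onto a normal \pdash subgroup of $\Out_G(\Pbold)\cong\Out_G(P)$ (Lemma~\ref{lemma: outs for one group}); since $P$ is \fusion-radical, that subgroup is trivial, so $N=1$. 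Thus $N_G(\Pbold)/\Pbold$ is finite with no nontrivial normal \pdash subgroup, i.e.\ $\Pbold$ is \pdash stubborn.

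The only step carrying real content is the \pdash centricity argument, and the obstacle there is conceptual rather than computational: \fusion-centricity is phrased in terms of subgroups conjugate \emph{within $S$}, so it must be transported to a control statement about continuous \pdash toral subgroups of $C_G(\Pbold)$. The device is to discretize an arbitrary \pdash toral subgroup of the centralizer, enlarge it by $P$, conjugate the union into $S$ so that the definition of \fusion-centric applies verbatim, and then pass back to closures — precisely the kind of manoeuvre that Section~\ref{section: relative dptas} (resting on the snugness supplied by Lemma~\ref{lemma: bullet-snugly-embedded}) is built to support. The routine points to verify are that $CP$ is genuinely a discrete \pdash toral subgroup and that conjugating into $S$ preserves the centralizing relation; I expect no surprises there.
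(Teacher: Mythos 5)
Your proof is correct and follows essentially the same route as the paper's: for \pdash centricity, discretize inside $C_G(\Pbold)$, adjoin $P$, conjugate the resulting discrete \pdash toral group into $S$, apply the \fusion-centric condition, and pass back to closures; for \pdash stubbornness, combine the extension $1\to C_G(\Pbold)/Z(\Pbold)\to N_G(\Pbold)/\Pbold\to \Out_G(\Pbold)\to 1$ with $\Out_G(P)\cong\Out_G(\Pbold)$ and the prime-to-$p$ order of the kernel. The only difference is organizational: you show every \pdash toral subgroup $\Cbold\subgroupeq C_G(\Pbold)$ satisfies $\Cbold\subgroupeq Z(\Pbold)$, whereas the paper fixes a maximal such subgroup $\Hbold$ and proves $\Hbold=Z(\Pbold)$ by identifying its discretization with $C_{S'}(P)$ --- your containment formulation is marginally leaner but rests on exactly the same mechanism.
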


\begin{proof}
To show that $\Pbold$ is \pdash centric, we must show that $Z(\Pbold)$ is a maximal \pdash toral subgroup of~$C_G(\Pbold)$. 
To see this, suppose that $\Hbold\subgroupeq C_G(\Pbold)$ 
is a maximal \pdash toral subgroup. 
Then $\Hbold$ necessarily contains $Z(\Pbold)$, because all choices for $\Hbold$ are conjugate in $C_G(\Pbold)$ and 
$Z(\Pbold)\triangleleft \, C_G(\Pbold)$.
We construct a \pdash discretization of~$\Hbold$ by expanding $Z(P)\subgroupeq \Hbold$ to
a \pdash discretization $H$ of $\Hbold$. Then we further expand the discrete \pdash toral subgroup $H\cdot P$ to a maximal discrete \pdash toral subgroup $S'$ of~$G$. (Note that $S'$ does not have to have the same closure as~$S$.) 

All maximal discrete \pdash toral subgroups of $G$ are conjugate, so there exists $g\in G$ with $c_g(S')=S$. Let $Q=c_g(P)\subgroupeq S$ and $J=c_g(H)\subgroupeq S$
so we have the following picture:
\begin{equation*} 
\begin{gathered}
\xymatrix{
Z(P)               \ar[r]\ar[d]_{c_g}
        & H        \ar[r]\ar[d]_{c_g} 
        & P\cdot H \ar[r]\ar[d]_{c_g}
        & S'       \ar[r]\ar[d]_{c_g}
        & G        \ar[d]_{c_g}\\
Z(Q)               \ar[r]
        & J        \ar[r] 
        & Q\cdot J\ar[r]
        & S         \ar[r]
        & G.
}
\end{gathered}
\end{equation*}
By construction, $\Hbold=C_{\Sbold'}(\Pbold)=C_{\Sbold'}(P)$, and further, $H$ is a \pdash discretization of~$\Hbold$ and $H\subgroupeq S'$. This means $H=C_{S'}(P)$, and so $J=C_S(Q)$. 

But we know that $Q\subgroupeq S$ is 
$G$-conjugate to~$P$, and by definition of \fusion-centric,
that implies $Z(Q)=C_{S}(Q)=J$. Hence $Z(P)=H$ as well. 
Lastly, because $P$ is \fusion-centric and \fusion-radical,
$P$ is snug by
Lemma~\ref{lemma: bullet-snugly-embedded}, and therefore   
$Z(P)$ is a \pdash discretization of $Z(\Pbold)$
by Corollary~\ref{cor: center is snug}. Since $H$ is a \pdash discretization of $\Hbold$ by construction, and $H=Z(P)$, we find that $\Hbold=Z(\Pbold)$, as required to prove that $\Pbold$ is \pdash centric. 

To show that $\Pbold$ is \pdash stubborn, we must show that $N_{G}\Pbold/\Pbold$ is finite and contains no nontrivial normal \pdash subgroups. Consider the short exact sequence 
\begin{equation} \label{eq: normalizer versus Out}
1\longrightarrow C_G(\Pbold)/Z(\Pbold) 
 \longrightarrow N_G(\Pbold)/\Pbold
 \longrightarrow 
     \underbrace{N_G(\Pbold)/\left[\strut C_G(\Pbold)\cdot \Pbold\right]}_{\Out_G(\Pbold)}
 \longrightarrow 1.
\end{equation}
The right-hand term is $\Out_G(\Pbold)$, which by 
Lemma~\ref{lemma: outs for one group}
is isomorphic to $\Out_{G}(P)$. 
The definition of 
\fusion-radical tells us that 
$\Out_G(P)$ contains no nontrivial normal \pdash subgroups, and hence the same is true of
$\Out_G(\Pbold)$.

Turning our attention to the left-hand term in~\eqref{eq: normalizer versus Out}, 
we have proved that $Z(\Pbold)$ is a maximal \pdash toral subgroup of $C_G(\Pbold)$. The quotient $C_G(\Pbold)/Z(\Pbold)$ is a compact Lie group, but cannot contain an $S^1$ by maximality of~$Z(\Pbold)$. Hence $C_G(\Pbold)/Z(\Pbold)$ is finite. Again by maximality of~$Z(\Pbold)$, we know $C_G(\Pbold)/Z(\Pbold)$ has no \pdash torsion elements, so it has order prime to~$p$. As a consequence, the image of a nontrivial normal \pdash subgroup of $N_G(\Pbold)/\Pbold$ would be a nontrivial \pdash subgroup of $\Out_G(\Pbold)=\Out_{G}(P)$, a contradiction of the assumption that $P$ is \fusion-radical.  

It remains to establish that $N_{G}(\Pbold)/\Pbold$ is finite, for which is it sufficient to know that the right-hand term, $\Out_G(\Pbold)\cong\Out_G(P)$, is finite. If $\Out_G(P)$ is not finite, then it has a nontrivial torus, and therefore an infinite torsion subgroup. However, any torsion subgroup of $\Out_G(P)$ is finite
\cite[Prop.~1.5]{BLO-Discrete}.
Therefore $\Out_G(\Pbold)$ is finite, and hence $N_G(\Pbold)/\Pbold$ is finite with no nontrivial normal \pdash subgroups, meaning that $\Pbold$ is \pdash stubborn, as required. 
\end{proof}

So far, our progress toward proving Theorem~\ref{theorem: identification of chains} is to establish that the statement makes sense: the function actually exists! Next we establish that the function is injective. 

\begin{lemma} \label{lemma: monomorphism}
Let $\Sbold\subgroupeq G$ be a maximal \pdash toral subgroup of the Lie group~$G$, and fix a \pdash discretization $S$ of~$\Sbold$. Suppose that 
$P_0\subgroupeq \ldots\subgroupeq P_{k}$
and $Q_0\subgroupeq \ldots\subgroupeq Q_{k}$
are two chains of snug discrete \pdash toral subgroups of~$S$ that both have 
$\Pbold_0\subgroupeq \ldots\subgroupeq \Pbold_{k}$
as their closure. Then 
$P_0\subgroupeq \ldots\subgroupeq P_{k}$
and $Q_0\subgroupeq \ldots\subgroupeq Q_{k}$ are 
conjugate in~$\Pbold_k$ (and hence necessarily in~$G$). 
\end{lemma}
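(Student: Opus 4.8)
The plan is to prove the statement by induction on the length $k$ of the chains, using Proposition~\ref{proposition: uniqueness of relative dptas} at each step to find a conjugating element inside a centralizer, which is precisely what keeps the induction going. The base case $k=0$ is exactly Proposition~\ref{proposition: existence of maximal}(2) applied inside $\Pbold_0$: since $P_0$ and $Q_0$ are both maximal discrete \pdash toral subgroups of $\Pbold_0$ (they are snug with closure $\Pbold_0$), they are conjugate in $\Pbold_0$, hence in $\Pbold_k$.

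For the inductive step, suppose the result holds for chains of length $k-1$. First I would apply the inductive hypothesis to the truncated chains $P_0\subgroupeq\cdots\subgroupeq P_{k-1}$ and $Q_0\subgroupeq\cdots\subgroupeq Q_{k-1}$, which both have closure $\Pbold_0\subgroupeq\cdots\subgroupeq\Pbold_{k-1}$: this gives an element $\gbold\in\Pbold_{k-1}$ with $c_{\gbold}(P_i)=Q_i$ for all $i\le k-1$. After replacing the chain $P_0\subgroupeq\cdots\subgroupeq P_k$ by its $c_{\gbold}$-image, we may assume $P_i=Q_i$ for $0\le i\le k-1$, and it remains to conjugate $P_k$ to $Q_k$ by an element of $\Pbold_k$ that fixes each $P_i=Q_i$ for $i<k$. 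Now $P_k$ and $Q_k$ are both snug discrete \pdash toral subgroups of $S$ with closure $\Pbold_k$, i.e.\ both are \pdash discretizations of $\Pbold_k$, and both contain the common subgroup $Q_{k-1}=P_{k-1}\subgroupeq P_k\cap Q_k$. By Proposition~\ref{proposition: uniqueness of relative dptas} there exists $\ybold\in C_{\Pbold_k}(P_{k-1})$ with $c_{\ybold}(P_k)=Q_k$. Since $\ybold$ centralizes $P_{k-1}$, it centralizes (in particular fixes) every $P_i$ with $i\le k-1$, so $c_{\ybold}$ carries the whole chain $P_0\subgroupeq\cdots\subgroupeq P_k$ to $Q_0\subgroupeq\cdots\subgroupeq Q_k$. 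Composing with $c_{\gbold}$ gives the required conjugacy in $\Pbold_k$, and $\Pbold_k\subgroupeq G$ gives conjugacy in $G$.

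One point that needs a word of care is why $P_k$ and $Q_k$ are genuinely \pdash discretizations of $\Pbold_k$, i.e.\ why snugness of $P_k$ as a subgroup of $S$ is the same as $P_k$ being a \emph{maximal} discrete \pdash toral subgroup of its closure $\Pbold_k$: this is immediate from Definition~\ref{definition:snuggly-embedded} and Definition~\ref{defn: p-discretization}, since $\overline{P_k}=\Pbold_k$ by hypothesis. The only real subtlety — the "main obstacle" — is the bookkeeping in the inductive step: one must check that the element produced by Proposition~\ref{proposition: uniqueness of relative dptas} fixes the lower terms of the chain, and this is exactly why that proposition is stated with the conclusion $\ybold\in C_{\Pbold}(Q)$ rather than merely $\ybold\in\Pbold$. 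Everything else is formal, so I expect the proof to be short once the induction is set up correctly.
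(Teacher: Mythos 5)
Your proof is correct and follows essentially the same route as the paper's: induction on $k$, with the base case from conjugacy of $p$-discretizations and the inductive step using Proposition~\ref{proposition: uniqueness of relative dptas} to produce a conjugating element in $C_{\Pbold_k}(Q_{k-1})$ that therefore fixes the lower terms of the chain. The only cosmetic difference is that you normalize so that $P_i=Q_i$ for $i<k$ before invoking the proposition, whereas the paper works directly with $c_{\xbold}(P_k)$ and composes at the end.
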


\begin{proof}
We induct on~$k$. The base case $k=0$ is true because $P_0$ and $Q_0$ are both \pdash discretizations of~$\Pbold_0$, and hence are conjugate in~$\Pbold_0$. Now suppose that  
$P_0\subgroupeq \ldots\subgroupeq P_{k-1}$
and 
$Q_0\subgroupeq \ldots\subgroupeq Q_{k-1}$ 
are conjugate by $\xbold\in\Pbold_{k-1}$.  
Then $c_\xbold(P_k)$ and $Q_k$ are \pdash discretizations of~$\Pbold_k$, and $Q_{k-1}\subgroupeq c_\xbold(P_k)\cap Q_k$.
By Proposition~\ref{proposition: uniqueness of relative dptas}, 
there exists $\ybold\in C_{\Pbold_k}(Q_{k-1})$ such that 
$c_\ybold\left(c_\xbold(P_k)\right)=Q_k$. Hence $\xbold\cdot\ybold\in\Pbold_k$ and conjugates  
$P_0\subgroupeq \ldots\subgroupeq P_{k}$
to
$Q_0\subgroupeq \ldots\subgroupeq Q_{k}$.
\end{proof}

To address the extent to which the function in Theorem~\ref{theorem: identification of chains} is surjective, we first need to check whether it is always possible to obtain a discretization of a given chain in~$\Sbold$ within the chosen \pdash discretization~$S$. 

\begin{lemma} \label{lemma: compatible chains of dptas}
Let $\Sbold\subgroupeq G$ be a maximal \pdash toral subgroup of the Lie group~$G$, and let 
$\Pbold_0\subgroupeq \ldots\subgroupeq \Pbold_{k}$ be a chain of \pdash toral subgroups of~$\Sbold$. Fix a \pdash discretization $S\subgroupeq\Sbold$. Then there exists an $\Sbold$-conjugate of $\Pbold_0\subgroupeq \ldots\subgroupeq \Pbold_{k}$ that has a chain of \pdash discretizations $P_0\subgroupeq \ldots\subgroupeq P_{k}$ contained in~$S$. 
\end{lemma}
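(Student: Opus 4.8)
The plan is to build a chain of \pdash discretizations of the $\Pbold_i$ inside an \emph{auxiliary} maximal discrete \pdash toral subgroup of $\Sbold$, and then conjugate that auxiliary subgroup onto the prescribed $S$ by an element of $\Sbold$; conjugating the whole picture is harmless since we are only asked to produce \emph{some} $\Sbold$-conjugate of the chain. Concretely, I would first construct a chain $P_0\subgroupeq\ldots\subgroupeq P_k$ with each $P_i$ a \pdash discretization of $\Pbold_i$ by a bottom-up iteration of Proposition~\ref{proposition: existence of maximal}(1): choose $P_0$ to be any maximal discrete \pdash toral subgroup of $\Pbold_0$, which is a \pdash discretization of $\Pbold_0$ by the characterization noted after Definition~\ref{defn: p-discretization}; then, having built $P_{i-1}\subgroupeq\Pbold_{i-1}\subgroupeq\Pbold_i$, enlarge $P_{i-1}$ to a maximal discrete \pdash toral subgroup $P_i$ of $\Pbold_i$, again a \pdash discretization of $\Pbold_i$. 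Finally, since $\Pbold_k\subgroupeq\Sbold$, enlarge $P_k$ once more to a maximal discrete \pdash toral subgroup $S'\subgroupeq\Sbold$, so that $S'$ is a \pdash discretization of $\Sbold$.

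Now $S$ and $S'$ are both maximal discrete \pdash toral subgroups of the compact Lie group $\Sbold$, so Proposition~\ref{proposition: existence of maximal}(2) furnishes $\sbold\in\Sbold$ with $c_{\sbold}(S')=S$. I claim that $c_{\sbold}(\Pbold_0)\subgroupeq\ldots\subgroupeq c_{\sbold}(\Pbold_k)$ is the required $\Sbold$-conjugate of the original chain. Indeed, conjugation by $\sbold$ is simultaneously a group isomorphism and a homeomorphism of $G$, hence it carries the relations $\overline{P_i}=\Pbold_i$ and the maximality of $P_i$ among discrete \pdash toral subgroups of $\Pbold_i$ to the corresponding statements for $c_{\sbold}(P_i)$ inside $c_{\sbold}(\Pbold_i)$; thus each $c_{\sbold}(P_i)$ is a \pdash discretization of $c_{\sbold}(\Pbold_i)$, the subgroups $c_{\sbold}(P_i)$ still form a chain, and they all lie in $c_{\sbold}(S')=S$. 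Renaming $c_{\sbold}(P_i)$ as the $P_i$ of the statement completes the argument.

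I do not expect a genuine obstacle here: the substantive input is entirely contained in Proposition~\ref{proposition: existence of maximal} (existence and conjugacy of maximal discrete \pdash toral subgroups) together with the identification of a \pdash discretization with a maximal discrete \pdash toral subgroup. The only point that deserves a line of care is the last step --- verifying that transporting the auxiliary $S'$ onto $S$ does not destroy the property of being a \pdash discretization of the (now conjugated) chain --- and this is immediate because conjugation in a compact Lie group respects closures, inclusions, and maximality all at once.
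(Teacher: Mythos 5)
Your proposal is correct and follows essentially the same route as the paper's proof: build the chain of \pdash discretizations bottom-up via Proposition~\ref{proposition: existence of maximal}(1), extend to an auxiliary maximal discrete \pdash toral subgroup $S'\subgroupeq\Sbold$, and use Proposition~\ref{proposition: existence of maximal}(2) to conjugate $S'$ onto $S$ by an element of $\Sbold$. The extra verification you spell out at the end (that conjugation preserves closures, inclusions, and maximality) is left implicit in the paper but is the same argument.
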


\begin{proof}
Choose a \pdash discretization $P_0$ of~$\Pbold_0$, and expand it one group at a time to a chain of \pdash discretizations
$P_0\subgroupeq \ldots\subgroupeq P_k\subgroupeq S'$ of $\Pbold_0\subgroupeq \ldots\subgroupeq \Pbold_{k}\subgroup\Sbold$. Since $S$ and $S'$ are both \pdash discretizations of $\Sbold$, there exists 
$\sbold\in\Sbold$ such that $c_\sbold(S')=S$. 
Then $c_\sbold\left(P_0\subgroupeq \ldots\subgroupeq P_k\right)$ 
is a \pdash discretization inside $S$ of 
$c_\sbold\left(\Pbold_0\subgroupeq \ldots\subgroupeq \Pbold_{k}\right)$.
\end{proof}

\begin{proof}[Proof of Theorem~\ref{theorem: identification of chains}]
Most of the proof is in the preceding results. We have proved that the function exists (Proposition~\ref{proposition: p-stubborn subgroups}) and that it is injective (Lemma~\ref{lemma: monomorphism}). 
Further, Lemma~\ref{lemma: compatible chains of dptas} lays the 
groundwork for an epimorphism statement, 
since a chain in the target has a simultaneous \pdash discretization in the chosen~$S$. To finish the proof of the 
epimorphism statement, we must show that 
\begin{enumerate}
    \item if $\Pbold$ is \pdash stubborn and \pdash centric, with \pdash discretization~$P\subgroupeq S$, then $P$ is \fusion-centric, and 
    \item if in addition $\pi_0 G$ is a \pdash group, then $P$ is also \fusion-radical. 
\end{enumerate}

Suppose that $\Pbold$ is \pdash stubborn and \pdash centric. Then 
$C_G(\Pbold)/Z(\Pbold)\subgroupeq N_G(\Pbold)/\Pbold$ is finite, and has order prime to $p$ because $\Pbold$ is \pdash centric. 
Mapping from $\Sbold$ to~$G$ gives a monomorphism
$C_\Sbold(\Pbold)/Z(\Pbold)\hookrightarrow 
C_G(\Pbold)/Z(\Pbold)$ 
from a \pdash toral group (\cite[A.3]{JMO}) to a finite group 
of order prime to~$p$,
and therefore the map is null. 
We conclude that 
$Z(\Pbold)=C_\Sbold(\Pbold)$.
But by Proposition~\ref{proposition: discrete to continuous} 
and Corollary~\ref{cor: center is snug}, we know that the groups
$Z(P)\subgroupeq C_S(P)$ are \pdash discretizations of 
$Z(\Pbold)=C_\Sbold(\Pbold)$, respectively. Maximality implies that $Z(P)=C_S(P)$, that is, the group $P$ satisfies the required condition to be
\fusion-centric. 

We must also check that if $Q\subgroupeq S$ is $G$-conjugate to~$P$, then $C_S(Q)=Z(Q)$. The subgroup $Q$ is snug, because $P$ is. Let $\Qbold$ be the closure of~$Q$, and observe that $\Qbold$ is  $G$-conjugate to~$\Pbold$, by the same element that takes $Q$ to~$P$. Further, $\Qbold$ is \pdash stubborn and \pdash centric, because $\Pbold$ is, and those properties are preserved by conjugation in~$G$. Since $Q\subgroupeq\Qbold$ is a \pdash discretization, the argument of the previous paragraph shows that $Q$ is \fusion-centric as well.

We must still show that $P$ is \fusion-radical when we know that $\pi_0 G$ is a \pdash group, that is, we must show that 
$\Out_G(P)$ has no nontrivial normal \pdash subgroups. 
By Lemma~\ref{lemma: outs for one group}, 
$\Out_G(P)\cong\Out_G(\Pbold)$, so we can use the short exact sequence 
of~\eqref{eq: normalizer versus Out}. 
The key ingredient is that if $\pi_0 G$ is a \pdash group, then $C_{G}(\Pbold)$ is \pdash toral \cite[A.5]{JMO}, 
and hence $C_{G}(\Pbold)/\Pbold\subgroupeq N_G(\Pbold)/\Pbold$ is
also \pdash toral. However, $\Pbold$ is \pdash stubborn by assumption, so 
$N_G(\Pbold)/\Pbold$ is finite, and $C_{G}(\Pbold)/\Pbold$ is therefore a finite \pdash group. If $\Out_G(\Pbold)$ had a nontrivial normal \pdash subgroup, then its inverse image in $N_G(\Pbold)/\Pbold$ would be a normal \pdash subgroup, in contradiction of the assumption that $\Pbold$ is \pdash stubborn. 
\end{proof}


\medskip

\section{Normalizers}
\label{section: normalizers}

In Section~\ref{section: relative dptas} we studied relative discretizations and proved that 
centralizers and normalizers of \pdash toral groups inside other \pdash toral groups are compatible with \pdash discretizations
(Proposition~\ref{proposition: discrete to continuous}). In this section, we leverage the results of Section~\ref{section: relative dptas} 
to prove that if $P_0\subgroupeq \ldots \subgroupeq P_k$ is a chain of \pdash discretizations of $\Pbold_0\subgroupeq \ldots \subgroupeq \Pbold_k$, then the corresponding map of $G$-normalizers induces a mod~$p$ homology isomorphism on classifying spaces.

\begin{theorem}    \label{theorem: comparison of chains}
\maintheoremtext
\end{theorem}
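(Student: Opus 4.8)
The plan is to reduce the statement to the case of a single subgroup, i.e.\ to the content of Section~\ref{section: relative dptas} packaged with Lemma~\ref{lem:snuglypcompleted}, by an inductive analysis of the normalizer of a chain. The central observation should be that $N_G(P_0\subgroupeq\cdots\subgroupeq P_k)=\bigcap_i N_G(P_i)$ acts on each $P_i$, and that the resulting map to the product of the $\Out_G(P_i)$ factors through the ``outer automorphism group of the chain'' alluded to after the theorem statement (Definition~\ref{defn: outer of a chain}). So the first step is to set up, for both the discrete chain $P_*$ and the continuous chain $\Pbold_*$, a short exact sequence of the form
\[
1\longrightarrow K(P_*)\longrightarrow N_G(P_*)\longrightarrow \Out_G(P_*)\longrightarrow 1,
\]
where $K(P_*)$ is the subgroup acting on the chain by inner automorphisms at every stage, and to check (via Lemma~\ref{lemma: out of chain is finite}, asserted in the excerpt) that $\Out_G(P_*)$ is finite, and that $\Out_G(P_*)\to\Out_G(\Pbold_*)$ is an isomorphism (Proposition~\ref{proposition: iso of outer autos discrete to cont}, also asserted). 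The latter isomorphism is exactly the chain-level upgrade of Lemma~\ref{lemma: outs for one group}, and I expect it to be proved by the same method: given $\nbold\in N_G(\Pbold_*)$, use Proposition~\ref{proposition: uniqueness of relative dptas} repeatedly (one subgroup at a time, as in Lemma~\ref{lemma: monomorphism}) to correct $\nbold$ by an element of $\Pbold_k$ into $N_G(P_*)$, and use Lemma~\ref{lemma: outer transporters} for injectivity.

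The second step is to identify the ``kernel'' parts. The natural candidate for $K(P_*)$ is $C_G(P_k)\cdot P_k$ when the chain is taut, but for a general chain one must be slightly more careful: an element of $N_G(P_*)$ acting by an inner automorphism on each $P_i$ need not act by conjugation by a single element. The clean way around this is to compare $N_G(P_*)$ with $C_G(P_k)\cdot N_{N_G(P_{k-1}\subgroupeq\cdots)}(P_k)$ and iterate, but I think the cleanest formulation is to build a ladder (diagram~\eqref{eq: ladder for normalizers} in the excerpt) comparing the two short exact sequences for $P_*$ and $\Pbold_*$:
\[
\begin{CD}
1 @>>> K(P_*) @>>> N_G(P_*) @>>> \Out_G(P_*) @>>> 1\\
@. @VVV @VVV @VV{\cong}V @.\\
1 @>>> K(\Pbold_*) @>>> N_G(\Pbold_*) @>>> \Out_G(\Pbold_*) @>>> 1.
\end{CD}
\]
The right vertical map is the isomorphism just discussed. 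For the left vertical map, I would argue that $K(P_*)$ is a discrete $p$-toral group with closure $K(\Pbold_*)$, and in fact that $K(P_*)$ is \emph{snugly} embedded in $K(\Pbold_*)$; this is where the results of Section~\ref{section: relative dptas} are applied to the chain, using Proposition~\ref{proposition: discrete to continuous} (normalizers and centralizers of $p$-toral-in-$p$-toral pass to discretizations) together with an induction on chain length to handle the nested intersection $\bigcap_i N_G(P_i)$ intersected with a suitable maximal $p$-toral subgroup.

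The third step is to pass to classifying spaces and $p$-complete. Applying $B(-)$ to the ladder gives a map of fibration sequences (up to homotopy) over $B\Out_G(P_*)\xrightarrow{\simeq}B\Out_G(\Pbold_*)$, with fibers $BK(P_*)\to BK(\Pbold_*)$. Since $K(P_*)$ is snug in $K(\Pbold_*)$, Lemma~\ref{lem:snuglypcompleted} gives $\pcomplete{(BK(P_*))}\simeq\pcomplete{(BK(\Pbold_*))}$. Because $\Out_G(P_*)$ is a finite group, the base is already ``$p$-good'' in the relevant sense, and the mod~$p$ fiberwise equivalence together with the equivalence on the base yields a mod~$p$ equivalence $\pcomplete{(BN_G(P_*))}\simeq\pcomplete{(BN_G(\Pbold_*))}$ — one has to be a little careful and invoke a mod~$p$ homology Serre spectral sequence comparison or the Bousfield--Kan fibre lemma rather than naive $p$-completion of fibrations, since $\pi_1$ of the base (a finite group) acts on the homology of the fiber, but the vertical map is $\pi_1$-equivariant by construction.

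\textbf{Main obstacle.} I expect the genuinely delicate point to be Step 2: pinning down what $K(P_*)$ is for an arbitrary (not necessarily ``taut'') chain and proving it is snugly embedded with closure $K(\Pbold_*)$. The subtlety is that $N_G(P_0\subgroupeq\cdots\subgroupeq P_k)$ is an intersection of $k+1$ normalizers, and controlling how a discretization sits inside the closure of such an intersection requires feeding Proposition~\ref{proposition: uniqueness of relative dptas} through the induction carefully — this is presumably exactly why the paper isolates Proposition~\ref{proposition: iso of outer autos discrete to cont} and Lemma~\ref{lemma: out of chain is finite} as separate ingredients before attacking the theorem. The classifying-space manipulation in Step 3, by contrast, should be routine once the group-theoretic ladder is in hand.
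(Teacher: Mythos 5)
Your overall skeleton --- a ladder of short exact sequences with $\Out_{G}(P_*)\to\Out_{G}(\Pbold_*)$ an isomorphism on the right, a comparison of kernels, and a Serre spectral sequence over the finite base --- is exactly the paper's diagram~\eqref{eq: ladder for normalizers}, and your Step~1 matches Proposition~\ref{proposition: iso of outer autos discrete to cont} and Lemma~\ref{lemma: out of chain is finite} both in statement and in the proposed method of proof. The genuine gap is in Step~2. The kernel in Definition~\ref{defn: outer of a chain} is the explicit subgroup $C_{G}(P_k)\cdot N_{P_k}(P_0\subgroupeq\cdots\subgroupeq P_k)$, and this is \emph{not} a discrete \pdash toral group: it contains the full centralizer $C_{G}(P_k)$, which is an arbitrary compact Lie group, since the theorem imposes no centricity hypothesis on the $P_i$ (for instance, if $G=U(2)$ and $P_0$ is the \pdash torsion of the center, then $C_G(P_0)=U(2)$). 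So your claim that $K(P_*)$ is a discrete \pdash toral group snugly embedded in $K(\Pbold_*)$ is false, and Lemma~\ref{lem:snuglypcompleted} cannot be applied to the map of fibers.

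The correct comparison of kernels runs differently. One observes that $C_{G}(P_k)=C_{G}(\Pbold_k)$ on the nose (because $P_k$ is dense in $\Pbold_k$), that the intersection of the two factors of the kernel is $Z(P_k)$, respectively $Z(\Pbold_k)$, with $Z(P_k)\subgroupeq Z(\Pbold_k)$ a \pdash discretization by Corollary~\ref{cor: center is snug}, and that $N_{P_k}(P_*)\to N_{\Pbold_k}(\Pbold_*)$ induces a mod~$p$ homology isomorphism of classifying spaces. This last input is Proposition~\ref{proposition: p-toral case of normalizers} --- the theorem in the special case of a \pdash toral ambient group --- and it is the real inductive engine of the argument, proved by its own induction on chain length using the same ladder inside $\Pbold_k$; it is missing from your outline, and your induction ``on chain length to handle the nested intersection'' is aimed at the wrong target (snugness of the kernel rather than the mod~$p$ equivalence of normalizers in a \pdash toral ambient group). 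With those three inputs, the kernels are compared not by a snug-embedding argument but via the principal fibration $B(C\times N)\to B(C\cdot N)\to B^{2}Z$ and a Serre spectral sequence comparison, and only then does one run the spectral sequence over the finite base $B\Out_{G}(P_*)$. Your Step~3 is otherwise consistent with what the paper does.
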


Our forthcoming work on the normalizer decomposition of a \pdash local compact group will use Theorem~\ref{theorem: comparison of chains} to establish that, when applied to a compact Lie group, our decomposition recovers a version of the theorem of Libman \cite{Libman-Minami} 
using \pdash toral subgroups that are both \pdash centric and \pdash stubborn. 

Our strategy to prove Theorem~\ref{theorem: comparison of chains} is to study the ``outer automorphism" group of a chain separately from the ``inner automorphisms" of the chain.

\begin{definition} \label{defn: outer of a chain}
Let $\newgroup$ be a group, and let
$P_0\subgroupeq \ldots \subgroupeq P_k$
be a chain of subgroups of~$\newgroup$. We define 
$\Out_{\newgroup}(P_0\subgroupeq \ldots \subgroupeq P_k)$ 
as the quotient
\[
\frac{N_{\newgroup}\left(P_0\subgroupeq \ldots \subgroupeq P_k\right)}
{C_{\newgroup}(P_{k})\cdot N_{P_{k}}\left(P_0\subgroupeq \ldots \subgroupeq P_{k}\right)}.
\]
\end{definition}

Note that the definition makes no restriction on the subgroups in the chain. In particular, in the next 
proposition, we establish the relationship between the outer automorphism group of a chain of continuous \pdash toral subgroups and that of a \pdash discretization of the chain.

\begin{proposition} \label{proposition: iso of outer autos discrete to cont}
Let $\Pbold_0\subgroupeq \ldots \subgroupeq \Pbold_k$ be a chain of \pdash toral subgroups of~$G$, and let 
$P_0\subgroupeq \ldots \subgroupeq P_k$ be a chain of \pdash discretizations of the \pdash toral chain. Then inclusion of normalizers induces an isomorphism
\begin{equation}  \label{eq: map of outs}
\underbrace{\Hugestrut
\Out_{G}(P_0\subgroupeq \ldots \subgroupeq P_k)}_{\frac{N_{G}\left(P_0\subgroupeq \ldots \subgroupeq P_k\right)}
{C_{G}(P_{k})\cdot N_{P_{k}}\left(P_0\subgroupeq \ldots \subgroupeq P_{k}\right)}}
\xlongrightarrow{\quad}
\underbrace{\Hugestrut
\Out_{G}(\Pbold_0\subgroupeq \ldots \subgroupeq \Pbold_k)}_{\frac{N_{G}\left(\Pbold_0\subgroupeq \ldots \subgroupeq \Pbold_k\right)}
{C_{G}(\Pbold_{k})\cdot N_{\Pbold_{k}}\left(\Pbold_0\subgroupeq \ldots \subgroupeq \Pbold_{k}\right)}}.
\end{equation}
\end{proposition}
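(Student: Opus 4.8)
The plan is to follow the pattern of Lemma~\ref{lemma: outs for one group}, which is exactly the case $k=0$: we check that the map of \eqref{eq: map of outs} is well-defined, then prove it is surjective and injective. Since every $P_i$ is a \pdash discretization it is snugly embedded, so the machinery of Section~\ref{section: relative dptas} is applicable. For well-definedness, observe that if $n$ normalizes each $P_i$ then $c_n(\Pbold_i)=c_n(\overline{P_i})=\overline{c_n(P_i)}=\Pbold_i$, so inclusion does carry $N_G(P_0\subgroupeq\ldots\subgroupeq P_k)$ into $N_G(\Pbold_0\subgroupeq\ldots\subgroupeq\Pbold_k)$; moreover $C_G(P_k)=C_G(\Pbold_k)$ because $P_k$ is dense in $\Pbold_k$, and the same closure computation shows $N_{P_k}(P_0\subgroupeq\ldots\subgroupeq P_k)\subgroupeq N_{\Pbold_k}(\Pbold_0\subgroupeq\ldots\subgroupeq\Pbold_k)$, so the two denominators are compatible and the induced map on quotients exists.

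To prove surjectivity, let $\nbold\in N_G(\Pbold_0\subgroupeq\ldots\subgroupeq\Pbold_k)$. Then $c_\nbold(P_0)\subgroupeq\ldots\subgroupeq c_\nbold(P_k)$ is a second chain of \pdash discretizations of $\Pbold_0\subgroupeq\ldots\subgroupeq\Pbold_k$, since conjugation by $\nbold$ preserves both snugness and, as $\nbold$ normalizes each $\Pbold_i$, the closures. By the argument of Lemma~\ref{lemma: monomorphism} (which uses only Proposition~\ref{proposition: uniqueness of relative dptas}) there is $\xbold\in\Pbold_k$ with $c_\xbold(c_\nbold(P_i))=P_i$ for every $i$, so $\nbold\xbold\in N_G(P_0\subgroupeq\ldots\subgroupeq P_k)$. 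Passing to closures gives $c_\xbold(\Pbold_i)=\overline{c_\xbold(c_\nbold(P_i))}=\Pbold_i$, so $\xbold\in N_{\Pbold_k}(\Pbold_0\subgroupeq\ldots\subgroupeq\Pbold_k)$; hence $\nbold\xbold$ and $\nbold$ determine the same class in $\Out_G(\Pbold_0\subgroupeq\ldots\subgroupeq\Pbold_k)$, and this class is the image under \eqref{eq: map of outs} of the class of $\nbold\xbold$.

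For injectivity, suppose $n\in N_G(P_0\subgroupeq\ldots\subgroupeq P_k)$ maps to the identity in the target, so $n=\cbold\xbold$ with $\cbold\in C_G(\Pbold_k)=C_G(P_k)$ and $\xbold\in N_{\Pbold_k}(\Pbold_0\subgroupeq\ldots\subgroupeq\Pbold_k)$. Since $\cbold$ centralizes $P_k$, hence each $P_i$, and $n$ normalizes each $P_i$, the element $\xbold=\cbold^{-1}n$ normalizes each $P_i$ while lying in $\Pbold_k$. Applying Lemma~\ref{lemma: outer transporters} to $\Pbold=\Pbold_k$, $P=Q_1=Q_2=P_k$, and $f=c_\xbold|_{P_k}$ produces $y\in P_k$ that induces the same automorphism of $P_k$ as $\xbold$ — and therefore the same automorphism of each $P_i\subgroupeq P_k$. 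Thus $y$ normalizes every $P_i$, so $y\in N_{P_k}(P_0\subgroupeq\ldots\subgroupeq P_k)$, while $\xbold y^{-1}\in C_G(P_k)$. Consequently $n=\cbold\xbold=(\cbold\,\xbold y^{-1})\,y$ lies in $C_G(P_k)\cdot N_{P_k}(P_0\subgroupeq\ldots\subgroupeq P_k)$, i.e. $n$ is already trivial in $\Out_G(P_0\subgroupeq\ldots\subgroupeq P_k)$.

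I do not expect a serious obstacle; the argument is essentially bookkeeping with the convention $c_g(x)=g^{-1}xg$, and the two nontrivial inputs are both already in place: Lemma~\ref{lemma: monomorphism} (built on Proposition~\ref{proposition: uniqueness of relative dptas}), which drives surjectivity by comparing two chains of discretizations of the same continuous chain, and Lemma~\ref{lemma: outer transporters}, which drives injectivity by descending an automorphism of $P_k$ from $\Pbold_k$ to $P_k$. The one subtlety worth care is the surjectivity step's verification that the correcting element $\xbold$ lands in the denominator $N_{\Pbold_k}(\Pbold_0\subgroupeq\ldots\subgroupeq\Pbold_k)$ and not merely in $\Pbold_k$; as noted, this follows by taking closures.
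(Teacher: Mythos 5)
Your proposal is correct and follows essentially the same route as the paper: surjectivity by correcting a normalizer of the continuous chain with an element of $\Pbold_k$ produced by Proposition~\ref{proposition: uniqueness of relative dptas} (the paper runs this induction inline rather than quoting the argument of Lemma~\ref{lemma: monomorphism}, but it is the same induction), and injectivity by writing $n=\cbold\xbold$ and using Lemma~\ref{lemma: outer transporters} to replace $\xbold$ by an element of $P_k$. Your extra care in checking that the correcting element lies in $N_{\Pbold_k}(\Pbold_0\subgroupeq\ldots\subgroupeq\Pbold_k)$ by passing to closures matches the paper's observation that an element of $C_{\Pbold_k}(P_{k-1})$ automatically normalizes the continuous chain.
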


\begin{proof}
To show that the map is an epimorphism, we induct on~$k$. 
The case $k=0$ is Lemma~\ref{lemma: outs for one group}.
For the inductive hypothesis, assume that we have
$g\in N_{G}\left(\Pbold_0\subgroupeq \ldots \subgroupeq \Pbold_k\right)$, and that $g$ stabilizes $P_0\subgroupeq\ldots\subgroupeq P_{k-1}$. 
We need to adjust $g$ to stabilize~$P_k$ as well. 

Suppose that
$c_{g}(P_{k})=P'_{k}$.
Then $P_{k-1}\subgroupeq P_k\cap P'_{k}$, and by 
Proposition~\ref{proposition: uniqueness of relative dptas} we can find $\ybold\in C_{\Pbold_{k}}(P_{k-1})$ that conjugates $P'_k$ to~$P_k$.
Now we have an element $g\cdot \ybold$ that stabilizes $P_0\subgroupeq\ldots \subgroupeq P_k$, 
Since $\ybold$ is in 
$N_{\Pbold_{k}}\left(\Pbold_0\subgroupeq \ldots \subgroupeq \Pbold_{k}\right)$, we conclude that we have an epimorphism
\[
N_{G}\left(P_0\subgroupeq \ldots \subgroupeq P_k\right)
\xlongrightarrow{\rm{\ epi}\ }
\frac{N_{G}\left(\Pbold_0\subgroupeq \ldots \subgroupeq \Pbold_k\right)}
{N_{\Pbold_{k}}\left(\Pbold_0\subgroupeq \ldots \subgroupeq \Pbold_{k}\right) },
\]
and therefore 
\eqref{eq: map of outs} is also surjective.

To establish injectivity, consider
$n\in N_{G}\left(P_0\subgroupeq \ldots \subgroupeq P_k\right)$ such that $[n]$ is in the kernel
of~\eqref{eq: map of outs}. We can write $n=c\cdot \xbold$ where 
$c\in C_{G}(\Pbold_k)=C_{G}(P_k)$ and 
$\xbold\in N_{\Pbold_{k}}\left(\Pbold_0\subgroupeq \ldots \subgroupeq \Pbold_{k}\right)$. 
Then in fact $\xbold$ stabilizes $P_0\subgroupeq\ldots\subgroupeq P_k$, since both $n$ and $c$ do so.
By Lemma~\ref{lemma: outer transporters}, the automorphism of $P_k$ induced by $\xbold$ can be induced by some element $y\in P_{k}$, and then $\xbold \cdot y^{-1}\in C_{\Pbold_k}(P_{k})$. Further, since 
$\xbold$ and $\xbold \cdot y^{-1}$ both stabilize
$P_0\subgroupeq\ldots\subgroupeq P_k$, so does~$y$. 
We have expressed
$n=c\cdot \xbold=(c\cdot \xbold\cdot y^{-1})\cdot y$ as an element in the denominator of the left side of~\eqref{eq: map of outs}, which completes the proof.
\end{proof}

\begin{corollary} \label{corollary: in and out}
Let $P$ be a \pdash discretization of a \pdash toral group~$\Pbold$, and let $Q_0\subgroupeq \ldots \subgroupeq Q_k$ be 
a chain of snug subgroups of~$P$. Then 
inclusions of normalizers induce isomorphisms
\[\xymatrix{
\Out_{P}\left(Q_0\subgroupeq\ldots\subgroupeq Q_k\right)\ar[rr]\ar[dr]
&& 
\Out_{\Pbold}\left(\Qbold_0\subgroupeq\ldots\subgroupeq \Qbold_k\right)\\
&\Out_{\Pbold}\left(Q_0\subgroupeq\ldots\subgroupeq Q_k\right)
\ar[ur]^{\cong}_{\rm{Prop.~\ref{proposition: iso of outer autos discrete to cont}}}
}
\]
\end{corollary}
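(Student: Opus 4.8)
The plan is to deduce Corollary~\ref{corollary: in and out} from Proposition~\ref{proposition: iso of outer autos discrete to cont} together with the machinery of Section~\ref{section: relative dptas}, by observing that the triangle has one edge already known to be an isomorphism, so it suffices to analyze a second edge and then invoke the two-out-of-three principle. Concretely, the slanted arrow $\Out_{\Pbold}(Q_0\subgroupeq\ldots\subgroupeq Q_k)\to\Out_{\Pbold}(\Qbold_0\subgroupeq\ldots\subgroupeq\Qbold_k)$ is exactly the instance of Proposition~\ref{proposition: iso of outer autos discrete to cont} obtained by taking the ambient Lie group to be $\Pbold$ itself; this requires knowing that $Q_0\subgroupeq\ldots\subgroupeq Q_k$ is a chain of \pdash discretizations of $\Qbold_0\subgroupeq\ldots\subgroupeq\Qbold_k$ inside $\Pbold$, which is precisely the snugness hypothesis on the $Q_i$ (each $Q_i$ is a maximal discrete \pdash toral subgroup of its closure $\Qbold_i$, and $\Qbold_i\subgroupeq\Pbold$ automatically). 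So the slanted arrow is an isomorphism for free.

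Next I would examine the horizontal arrow $\Out_P(Q_0\subgroupeq\ldots\subgroupeq Q_k)\to\Out_{\Pbold}(\Qbold_0\subgroupeq\ldots\subgroupeq\Qbold_k)$. The cleanest route is again Proposition~\ref{proposition: iso of outer autos discrete to cont}, but now one must be slightly careful: that proposition compares $\Out_G$ of a discrete chain with $\Out_G$ of its continuous closure, for a \emph{fixed} ambient $G$. Here the left-hand term uses ambient group $P$ and the right-hand term uses ambient group $\Pbold$, so this is not literally an instance of the proposition. The fix is to factor the horizontal arrow as
\[
\Out_P\left(Q_0\subgroupeq\ldots\subgroupeq Q_k\right)
\longrightarrow
\Out_{\Pbold}\left(Q_0\subgroupeq\ldots\subgroupeq Q_k\right)
\longrightarrow
\Out_{\Pbold}\left(\Qbold_0\subgroupeq\ldots\subgroupeq\Qbold_k\right),
\]
where the second map is the slanted arrow, already shown to be an isomorphism. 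Thus the corollary reduces to showing the first map — inclusion of normalizers from the discrete ambient group $P$ to its closure $\Pbold$, applied to a fixed discrete chain — is an isomorphism.

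For that remaining map I would run essentially the same epi/mono argument that proves Proposition~\ref{proposition: iso of outer autos discrete to cont}, but with $G$ replaced by the continuous \pdash toral group $\Pbold$ and the discrete ambient replaced by its discretization $P$. For surjectivity one inducts on $k$: given $\gbold\in N_{\Pbold}(\Qbold_0\subgroupeq\ldots\subgroupeq\Qbold_k)$ stabilizing $Q_0\subgroupeq\ldots\subgroupeq Q_{k-1}$, the subgroups $c_{\gbold}(Q_k)$ and $Q_k$ are both \pdash discretizations of $\Qbold_k$ containing $Q_{k-1}$, so Proposition~\ref{proposition: uniqueness of relative dptas} supplies $\ybold\in C_{\Qbold_k}(Q_{k-1})\subgroupeq N_{\Qbold_k}(\Qbold_0\subgroupeq\ldots\subgroupeq\Qbold_k)$ adjusting $\gbold$ to stabilize the whole discrete chain; the base case $k=0$ is the statement that $N_P(Q_0)$ surjects onto $N_{\Pbold}(\Qbold_0)/N_{\Qbold_0}(\ldots)$, which follows from the fact that all \pdash discretizations of $\Qbold_0$ are $\Qbold_0$-conjugate (Proposition~\ref{proposition: existence of maximal}). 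For injectivity one takes $n\in N_P(Q_0\subgroupeq\ldots\subgroupeq Q_k)$ whose class dies in $\Out_{\Pbold}$, writes $n=\cbold\cdot\xbold$ with $\cbold\in C_{\Pbold}(Q_k)$ and $\xbold\in N_{\Qbold_k}(\Qbold_0\subgroupeq\ldots\subgroupeq\Qbold_k)$, notes $\xbold$ stabilizes the discrete chain, and uses Lemma~\ref{lemma: outer transporters} to replace $\xbold$ by an element of $Q_k$ modulo $C_{\Qbold_k}(Q_k)$, landing $n$ in the denominator on the $P$-side.

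The main obstacle is the bookkeeping around which ambient group is in play: the three $\Out$ groups in the triangle involve two different ambient groups ($P$ and $\Pbold$) and a discrete-versus-continuous chain, and Proposition~\ref{proposition: iso of outer autos discrete to cont} as stated only handles one axis of variation at a time. Once the horizontal arrow is correctly factored through $\Out_{\Pbold}(Q_0\subgroupeq\ldots\subgroupeq Q_k)$ — so that one axis is the named proposition and the other is a small self-contained normalizer computation internal to $\Pbold$ — everything else is a direct reprise of arguments already in the section. One should also double-check that the triangle genuinely commutes, i.e. that both composites $\Out_P\to\Out_{\Pbold}(\text{discrete})\to\Out_{\Pbold}(\text{continuous})$ and $\Out_P\to\Out_{\Pbold}(\text{continuous})$ are induced by the same inclusions of normalizers; this is immediate from functoriality of the normalizer/centralizer constructions under the inclusions $Q_i\hookrightarrow\Qbold_i$ and $P\hookrightarrow\Pbold$, but it is worth saying explicitly.
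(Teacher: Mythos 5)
Your overall route is the same as the paper's: the slanted arrow is Proposition~\ref{proposition: iso of outer autos discrete to cont} applied with ambient group $\Pbold$ (which is legitimate, since snugness of the $Q_i$ makes them \pdash discretizations of the $\Qbold_i\subgroupeq\Pbold$), and the real content of the corollary is the left diagonal $\Out_P(Q_0\subgroupeq\ldots\subgroupeq Q_k)\to\Out_{\Pbold}(Q_0\subgroupeq\ldots\subgroupeq Q_k)$, after which the horizontal arrow follows by composing. Your injectivity argument is essentially right (with the small point, worth making explicit, that $n\cdot y^{-1}$ lies in $P$ because $n$ and $y$ do, so the centralizing factor lands in $C_P(Q_k)$ and not merely in $C_{\Pbold}(Q_k)$).

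The gap is in the surjectivity argument for your ``remaining map''. What you actually prove there is surjectivity of the slanted arrow over again: starting from $\gbold\in N_{\Pbold}(\Qbold_0\subgroupeq\ldots\subgroupeq\Qbold_k)$ and adjusting via Proposition~\ref{proposition: uniqueness of relative dptas}, you produce a representative in $N_{\Pbold}(Q_0\subgroupeq\ldots\subgroupeq Q_k)$ --- an element of the \emph{continuous} group $\Pbold$ normalizing the discrete chain --- whereas surjectivity of the left diagonal requires a representative in $N_{P}(Q_0\subgroupeq\ldots\subgroupeq Q_k)$. The same problem sits in your base case: the fact that all \pdash discretizations of $\Qbold_0$ are conjugate only gets you into $N_{\Pbold}(Q_0)$, not into $N_P(Q_0)$. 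The missing (and, in fact, only needed) step is one application of Lemma~\ref{lemma: outer transporters}: given $\gbold\in N_{\Pbold}(Q_0\subgroupeq\ldots\subgroupeq Q_k)$, the automorphism of $Q_k$ it induces is realized by conjugation by some $x\in P$; since $c_x$ agrees with $c_{\gbold}$ on $Q_k$ it preserves every $Q_i\subgroupeq Q_k$, so $x\in N_P(Q_0\subgroupeq\ldots\subgroupeq Q_k)$, while $\gbold\cdot x^{-1}\in C_{\Pbold}(Q_k)$, whence $[\gbold]=[x]$ in $\Out_{\Pbold}(Q_0\subgroupeq\ldots\subgroupeq Q_k)$. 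This is the paper's entire proof of the left diagonal (the monomorphism statement is immediate, the epimorphism statement is exactly this); no induction on the length of the chain is required for this corollary.
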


\begin{proof}
The left diagonal map is clearly a monomorphism. 
To see that it is also an epimorphism, 
note that any automorphism of $Q_{k}$ induced 
by conjugation in $\Pbold$ can also be induced by conjugation in~$P$ (Lemma~\ref{lemma: outer transporters}).
\end{proof}

Outer automorphism groups of chains turn out to be
finite, generalizing the result 
of \cite[Prop.~9.4(b)]{BLO-Discrete}
for a single group. 

\begin{lemma}\label{lemma: out of chain is finite}
Let $G$ be a compact Lie group and let 
$P_0\subgroupeq \ldots \subgroupeq P_k$ be a chain of snug discrete \pdash toral subgroups. Then 
$\Out_G\left(P_0\subgroupeq \ldots \subgroupeq P_k\right)$ is finite. 
\end{lemma}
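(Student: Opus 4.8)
The plan is to prove that $\Out_G(P_0 \subgroupeq \ldots \subgroupeq P_k)$ is finite by comparing it with the already-known finiteness of $\Out_G(P_k)$, which is \cite[Prop.~1.5]{BLO-Discrete} (applied via Lemma~\ref{lemma: outs for one group}) for a single group. First I would construct a natural homomorphism
\[
\Out_G\left(P_0 \subgroupeq \ldots \subgroupeq P_k\right) \longrightarrow \Out_G(P_k),
\]
induced by the inclusion of $N_G(P_0 \subgroupeq \ldots \subgroupeq P_k) \subgroupeq N_G(P_k)$; one checks this is well-defined because $C_G(P_k) \cdot N_{P_k}(P_0 \subgroupeq \ldots \subgroupeq P_k) \subgroupeq C_G(P_k) \cdot P_k$. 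The idea is then that $\Out_G(P_k)$ is finite, so it suffices to bound the kernel of this map.

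The key step is to identify the kernel. An element of the kernel is represented by some $n \in N_G(P_0 \subgroupeq \ldots \subgroupeq P_k)$ whose image lies in $C_G(P_k) \cdot P_k$, say $n = c \cdot x$ with $c \in C_G(P_k)$ and $x \in P_k$. Adjusting by $c$, which acts trivially on the whole chain, we may assume the class is represented by an element $x \in P_k$ that normalizes the chain $P_0 \subgroupeq \ldots \subgroupeq P_k$; that is, the kernel is the image of $N_{P_k}(P_0 \subgroupeq \ldots \subgroupeq P_{k-1})$ in $\Out_G(P_0 \subgroupeq \ldots \subgroupeq P_k)$ (note $x$ already normalizes $P_k$). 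So the kernel is a quotient of $N_{P_k}(P_0 \subgroupeq \ldots \subgroupeq P_{k-1})$, modulo the subgroup $N_{P_k}(P_0 \subgroupeq \ldots \subgroupeq P_k)$ it sits over plus the part that lands in $C_G(P_k)$. I would then set up an induction on $k$: the kernel is controlled by how the subgroup $N_{P_k}(P_0 \subgroupeq \ldots \subgroupeq P_{k-1})$ of the discrete \pdash toral group $P_k$ acts on the shorter chain, and since a discrete \pdash toral group that normalizes a proper subchain but fixes it has finite-index issues governed by $\Out$-groups of shorter chains, one reduces to the inductive hypothesis for $P_0 \subgroupeq \ldots \subgroupeq P_{k-1}$ (inside $P_k$, hence inside $G$).

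Concretely, for the induction I expect to use Corollary~\ref{corollary: in and out} and Proposition~\ref{proposition: iso of outer autos discrete to cont}: the ``inner'' part $N_{P_k}(P_0 \subgroupeq \ldots \subgroupeq P_{k-1})/P_{k-1}$ maps to $\Out_{P_k}(P_0 \subgroupeq \ldots \subgroupeq P_{k-1})$, which by Corollary~\ref{corollary: in and out} is a subgroup of $\Out_{\Pbold_k}(P_0 \subgroupeq \ldots \subgroupeq P_{k-1})$; alternatively one iterates directly, peeling off one group at a time, each time reducing finiteness of $\Out_G$ of a chain of length $k$ to finiteness of $\Out$ of a single snug group (finite by \cite[Prop.~1.5]{BLO-Discrete} plus Lemma~\ref{lemma: outs for one group}) together with finiteness of $\Out_{P_k}$ of the truncated chain $P_0 \subgroupeq \ldots \subgroupeq P_{k-1}$, which is again finite by the inductive hypothesis since a discrete \pdash toral group is itself a compact Lie group (or, more elementarily, since by the base case applied repeatedly each relevant quotient sits inside a finite group).

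The main obstacle will be bookkeeping the two-step quotient in Definition~\ref{defn: outer of a chain} cleanly enough that the kernel of $\Out_G(P_0 \subgroupeq \ldots \subgroupeq P_k) \to \Out_G(P_k)$ is genuinely identified with something of the form $\Out$ of a shorter chain (so the induction closes), rather than merely a subquotient that could a priori be infinite. The crux is showing that modding out by $C_G(P_k)$ does not destroy finiteness — this is exactly where $P_k$ being snug, together with Lemma~\ref{lemma: outer transporters} (conjugations inside $\Pbold_k$ between subgroups of $P_k$ are realized inside $P_k$), is needed: it lets us replace any troublesome element of $C_{\Pbold_k}(P_k)$-type behaviour by an honest element of $P_k$, keeping everything inside the discrete world where the relevant groups are finite extensions and the single-group finiteness result applies.
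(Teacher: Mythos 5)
Your opening move is essentially the paper's: the paper also proves finiteness by mapping $\Out_G(P_0\subgroupeq\ldots\subgroupeq P_k)$ injectively to the $\Out_G$ of a shorter chain, except that it deletes the \emph{smallest} group $P_0$ and inducts (so that after $k$ steps one lands in $\Out_G(P_k)$, finite by \cite[Prop.~9.4(b)]{BLO-Discrete}), whereas you map to $\Out_G(P_k)$ in one shot. Both work, and the injectivity computation is the same in either version. The one correction on references: the finiteness of $\Out_G(P_k)$ for a single snug subgroup is \cite[Prop.~9.4(b)]{BLO-Discrete}, not Prop.~1.5; snugness of $P_k$ is needed exactly there and nowhere else in this argument.

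Where you go astray is in the second half: the elaborate induction you sketch is unnecessary, because your own identification of the kernel already finishes the proof. You correctly observe that a kernel element is represented by $n=c\cdot x$ with $c\in C_G(P_k)$ and $x\in P_k$, and that $x$ must normalize the whole chain, i.e.\ $x\in N_{P_k}(P_0\subgroupeq\ldots\subgroupeq P_{k-1})$. But an element of $P_k$ automatically normalizes $P_k$, so
\[
N_{P_k}\left(P_0\subgroupeq\ldots\subgroupeq P_{k-1}\right)
= N_{P_k}\left(P_0\subgroupeq\ldots\subgroupeq P_{k}\right),
\]
which is precisely the second factor of the denominator in Definition~\ref{defn: outer of a chain}. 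Hence $n=c\cdot x$ already lies in $C_G(P_k)\cdot N_{P_k}(P_0\subgroupeq\ldots\subgroupeq P_k)$ and the kernel is trivial --- there is no ``two-step quotient'' left to bookkeep, and neither Lemma~\ref{lemma: outer transporters} nor any discretization input is needed for this step. As written, your fallback induction (``finite-index issues governed by $\Out$-groups of shorter chains'') is too vague to stand on its own, so you should replace it with the one-line observation above; with that replacement the proof is complete and is, modulo doing all the deletions at once rather than one at a time, the same as the paper's.
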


\begin{proof}
We induct on~$k$. The base case is given by \cite[Prop.~9.4(b)]{BLO-Discrete}. Now suppose that
$\Out_G\left(P_0\subgroupeq \ldots \subgroupeq P_{k-1}\right)$ is finite, and consider the homomorphism
induced by deleting the smallest element of the chain: 
\begin{equation} \label{eq: morphism of outs}
\frac{N_G\left(P_0\subgroupeq \ldots \subgroupeq P_{k}\right)}{C_G(P_k)\cdot N_{P_k}\left(P_0\subgroupeq \ldots \subgroupeq P_{k}\right)}
\,\xlongrightarrow{\quad}\,
\frac{N_G\left(P_1\subgroupeq \ldots \subgroupeq P_{k}\right)}{C_G(P_k)\cdot N_{P_k}\left(P_1\subgroupeq \ldots \subgroupeq P_{k}\right)}.
\end{equation}
The target is finite by the inductive hypothesis, so we need only show that the map is injective. The homomorphism of the numerators is certainly injective. Now suppose that 
\[
n\in N_G\left(P_0\subgroupeq \ldots \subgroupeq P_{k}\right)\  \cap\  \left[\strut C_G(P_k)\cdot N_{P_k}\left(P_1\subgroupeq \ldots \subgroupeq P_{k}\right)\right].
\]
Then $n=c\cdot x$ for some 
$c\in C_G(P_k)$ and $x\in N_{P_k}\left(P_1\subgroupeq \ldots \subgroupeq P_{k}\right)$. The element $x$ 
also normalizes $P_0$, since $n$ and $c$ do. Hence $n=c\cdot x \in C_G(P_k)\cdot N_{P_k}\left(P_0\subgroupeq \ldots \subgroupeq P_{k}\right)$, as desired. 
\end{proof}

We turn now to the comparison of the normalizers and centralizers of continuous and discrete \pdash toral subgroups of~$G$. 
The starting point is Proposition~\ref{proposition: discrete to continuous}, which tells us that if $Q\subgroupeq P$ are \pdash discretizations for \pdash toral groups $\Qbold\subgroupeq \Pbold$, then 
$C_{P}(Q)\rightarrow C_{\Qbold}(\Pbold)$ and
$N_{P}(Q)\rightarrow N_{\Pbold}(\Qbold)$ are \pdash discretizations as well (and therefore 
induce mod~$p$ homology equivalences on classifying spaces by
Lemma~\ref{lem:snuglypcompleted}).

To prove Theorem~\ref{theorem: comparison of chains}, we need to relate normalizers of \pdash toral subgroups of an ambient Lie group~$G$ (which could itself be \pdash toral) with normalizers of their \pdash discretizations. We begin with a special case.

\begin{proposition} \label{proposition: p-toral case of normalizers}
Let $P$ be a \pdash discretization of a \pdash toral group~$\Pbold$, and let 
$Q_0\subgroupeq \ldots\subgroupeq Q_{k}$ be a chain of snugly embedded discrete \pdash toral subgroups of~$P$, with closures 
$\Qbold_0\subgroupeq \ldots\subgroupeq \Qbold_{k}$. Then the inclusion
\[
N_{P}\left(Q_0\subgroupeq \ldots\subgroupeq Q_{k}\right) 
\longrightarrow
N_{\Pbold}\left(\Qbold_0\subgroupeq \ldots\subgroupeq \Qbold_{k}\right)
\]
induces a mod~$p$ homology isomorphism on classifying spaces. 
\end{proposition}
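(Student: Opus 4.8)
The plan is to prove the sharper statement that $N_{P}(Q_0\subgroupeq\ldots\subgroupeq Q_{k})$ is itself a \pdash discretization of $N_{\Pbold}(\Qbold_0\subgroupeq\ldots\subgroupeq\Qbold_{k})$; once this is known, the claimed mod~$p$ homology isomorphism on classifying spaces is immediate from Lemma~\ref{lem:snuglypcompleted}. Here I am using that $N_{\Pbold}(\Qbold_0\subgroupeq\ldots\subgroupeq\Qbold_{k})=\bigcap_i N_{\Pbold}(\Qbold_i)$ is in fact \pdash toral, which one obtains by iterating \cite[Lemma~A.3]{JMO}: $N_{\Pbold}(\Qbold_{k})$ is \pdash toral, then $N_{\Pbold}(\Qbold_{k})\cap N_{\Pbold}(\Qbold_{k-1})=N_{N_{\Pbold}(\Qbold_{k})}(\Qbold_{k-1})$ is \pdash toral because $\Qbold_{k-1}\subgroupeq\Qbold_{k}\subgroupeq N_{\Pbold}(\Qbold_{k})$, and so on down the chain.

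The argument proceeds by induction on~$k$. The base case $k=0$ is exactly the normalizer half of Proposition~\ref{proposition: discrete to continuous}, applied to the inclusion $Q_0\subgroupeq P$ of \pdash discretizations of $\Qbold_0\subgroupeq\Pbold$ (note $Q_0$ is a \pdash discretization of $\Qbar_0=\Qbold_0$ precisely because it is snugly embedded). For the inductive step I would peel off the top group: set $\Mbold\definedas N_{\Pbold}(\Qbold_{k})$ and $M\definedas N_{P}(Q_{k})$. Since $Q_{k}$ is snug, $Q_{k}\subgroupeq P$ are \pdash discretizations of $\Qbold_{k}\subgroupeq\Pbold$, so Proposition~\ref{proposition: discrete to continuous} tells us $M$ is a \pdash discretization of the \pdash toral group~$\Mbold$. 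Next I would check that the remaining data transfers into $\Mbold$: the subgroups $Q_0\subgroupeq\ldots\subgroupeq Q_{k-1}$ all lie in $M=N_{P}(Q_{k})$ (each is contained in $Q_{k}$, which normalizes itself), and likewise $\Qbold_0\subgroupeq\ldots\subgroupeq\Qbold_{k-1}$ all lie in $\Mbold$; moreover $\Mbold$ is closed in $\Pbold$ with $\Qbold_i\subgroupeq\Qbold_{k}\subgroupeq\Mbold$, so the closure of $Q_i$ taken inside $\Mbold$ is still $\Qbold_i$, while snugness of $Q_i\subgroupeq\Qbold_i$ is intrinsic and hence preserved. Thus the inductive hypothesis applies to the chain $Q_0\subgroupeq\ldots\subgroupeq Q_{k-1}$ of snug subgroups of the \pdash discretization $M\subgroupeq\Mbold$, giving that $N_{M}(Q_0\subgroupeq\ldots\subgroupeq Q_{k-1})$ is a \pdash discretization of $N_{\Mbold}(\Qbold_0\subgroupeq\ldots\subgroupeq\Qbold_{k-1})$.

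It then remains to identify these groups with the ones in the statement. For each $i<k$ we have $N_{M}(Q_i)=M\cap N_{P}(Q_i)=N_{P}(Q_{k})\cap N_{P}(Q_i)$, so intersecting over $i=0,\ldots,k-1$ yields
\[
N_{M}(Q_0\subgroupeq\ldots\subgroupeq Q_{k-1})=\bigcap_{i=0}^{k}N_{P}(Q_i)=N_{P}(Q_0\subgroupeq\ldots\subgroupeq Q_{k}),
\]
and the identical computation inside $\Pbold$ gives $N_{\Mbold}(\Qbold_0\subgroupeq\ldots\subgroupeq\Qbold_{k-1})=N_{\Pbold}(\Qbold_0\subgroupeq\ldots\subgroupeq\Qbold_{k})$, closing the induction. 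The only genuinely delicate point is the bookkeeping in the inductive step — verifying that ``snugly embedded'' and ``closure equal to $\Qbold_i$'' survive passage from $\Pbold$ to the closed subgroup $\Mbold$ — together with the elementary intersection identity for normalizers; the real content is carried entirely by Proposition~\ref{proposition: discrete to continuous}, so I do not expect a serious obstacle beyond stating these compatibilities cleanly.
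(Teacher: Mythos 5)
Your proof is correct, but it takes a genuinely different route from the paper's. You prove the sharper statement that $N_{P}\left(Q_0\subgroupeq \ldots\subgroupeq Q_{k}\right)$ is itself a \pdash discretization of $N_{\Pbold}\left(\Qbold_0\subgroupeq \ldots\subgroupeq \Qbold_{k}\right)$, by a purely group-theoretic induction: peel off the top group, pass to the pair $M=N_P(Q_k)\subgroupeq \Mbold=N_{\Pbold}(\Qbold_k)$ (a \pdash discretization by Proposition~\ref{proposition: discrete to continuous}), and use the intersection identity $N_M(Q_0\subgroupeq\ldots\subgroupeq Q_{k-1})=N_P(Q_0\subgroupeq\ldots\subgroupeq Q_k)$; all homotopy theory is deferred to a single application of Lemma~\ref{lem:snuglypcompleted} at the end. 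The compatibility checks you flag (closures computed in the closed subgroup $\Mbold$ agree with those in $\Pbold$; snugness is intrinsic; the chain normalizer in $\Pbold$ is \pdash toral by iterating \cite[Lemma~A.3]{JMO}) are exactly the right ones and all go through. The paper's proof also inducts on $k$ with Proposition~\ref{proposition: discrete to continuous} as the base case, but its inductive step is homotopy-theoretic: it compares the two normalizers through the outer automorphism group of the chain (Definition~\ref{defn: outer of a chain}, Corollary~\ref{corollary: in and out}), reduces to the kernels $C_{P}(Q_{k})\cdot N_{Q_{k}}(Q_*)$ via a ladder of fibrations, handles the product $C\cdot N$ by a principal fibration over $B^2Z(Q_k)$, and closes with Serre and Rothenberg--Steenrod spectral sequence arguments, applying the inductive hypothesis to $N_{Q_k}(Q_0\subgroupeq\ldots\subgroupeq Q_{k-1})$ rather than to the normalizer pair. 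Your argument is more elementary and yields a strictly stronger conclusion (a statement about groups, not just classifying spaces); what the paper's approach buys is the $\Out$-of-a-chain machinery and the fibration ladder \eqref{eq: ladder for p-toral normalizers}, which are reused verbatim in the proof of Theorem~\ref{theorem: comparison of chains}, where the ambient group $G$ is no longer \pdash toral and your discretization argument would not apply.
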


\begin{proof}
We induct on the length of the chain. The case $k=0$ is provided by Proposition~\ref{proposition: discrete to continuous}. 

For the inductive hypothesis, assume that for any \pdash toral group $\Bbold$ and \pdash discretization $B\subgroupeq\Bbold$, the inclusion
\begin{equation}   \label{eq: inductive hypothesis}
N_{B}\left(Q_0\subgroupeq \ldots\subgroupeq Q_{k-1}\right) 
\longrightarrow
N_{\Bbold}\left(\Qbold_0\subgroupeq \ldots\subgroupeq \Qbold_{k-1}\right)
\end{equation}
induces a mod~$p$ homology isomorphism of classifying spaces.
Consider the
relationship of the desired statement for $k$ to the corresponding outer automorphism groups:
\begin{equation} \label{eq: ladder for p-toral normalizers}
\begin{gathered}
\xymatrix{
\Largestrut 
N_{P}\left(Q_0\subgroupeq \ldots\subgroupeq Q_{k}\right)
             \ar@{->>}[r]\ar@{^(->}[d]
        & \Largestrut \Out_{P}\left(Q_0\subgroupeq \ldots\subgroupeq Q_{k}\right)
             \ar[d]^{\cong \rm{\ by\ Cor.\ref{corollary: in and out}}}\\
N_{\Pbold}\left(\Qbold_0\subgroupeq \ldots\subgroupeq \Qbold_{k}\right)
             \ar@{->>}[r]
        & \Out_{\Pbold}\left(\Qbold_0\subgroupeq \ldots\subgroupeq \Qbold_{k}\right).
}
\end{gathered}
\end{equation}
The horizontal maps are epimorphisms, 
and the map of kernels is given by
\begin{equation}   \label{eq: map of kernels}
C_{P}(Q_{k})\cdot 
N_{Q_{k}}\left(Q_0\subgroupeq \ldots\subgroupeq Q_{k}\right)
\xrightarrow{\quad}
C_{\Pbold}(\Qbold_{k})\cdot 
N_{\Qbold_{k}}\left(
     \Qbold_0\subgroupeq\ldots\subgroupeq\Qbold_{k}
            \right).
\end{equation}

To streamline notation, let
$Q_{*}\definedas\left(Q_0\subgroupeq \ldots\subgroupeq Q_{k}\right)$ 
and 
$\Qbold_{*}\definedas\left(
     \Qbold_0\subgroupeq\ldots\subgroupeq\Qbold_{k}
            \right)$.
Then \eqref{eq: ladder for p-toral normalizers} 
induces a commutative ladder of classifying spaces, where the rows are fibrations: 
\begin{equation}    \label{eq: the ladder we want}
\begin{gathered}
\xymatrix{
\Largestrut 
B\left(\strut C_{P}(Q_{k})\cdot N_{Q_{k}}\left(Q_*\right) \right)
      \ar[r]\ar[d] 
  & B\left(\strut N_{P}\left(Q_*\right)\right)
    \ar[r]\ar[d]
  & \Largestrut B\Out_{P}\left(Q_*\right)
    \ar[d]^{\cong} 
    \\
B\left(\strut C_{\Pbold}(\Qbold_{k})\cdot N_{\Qbold_{k}}\left(\Qbold_*\right)\right)
    \ar[r] 
  & B\left(N_{\Pbold}\left(\Qbold_*\right)\right)
    \ar[r]
  & B\Out_{\Pbold}\left(\Qbold_*\right).
}
\end{gathered}
\end{equation}
Since the base spaces are the same, it is sufficient to prove that the map between fibers is a mod~$p$ homology isomorphism;
a Serre spectral sequence argument then establishes that the middle map is also an isomorphism on mod~$p$ homology.

To understand the map in \eqref{eq: map of kernels}, 
we need to understand the map on each factor, and also on their intersection. 
The groups $C_{P}(Q_{k})$ and 
  $N_{Q_{k}}\left(Q_*\right)$ are commuting subgroups of~$P$, and their intersection is~$Z(Q_k)$.
We have a central extension
\[
0\longrightarrow 
Z(Q_k)
\longrightarrow 
C_{P}(Q_{k})\times N_{Q_{k}}\left(Q_*\right)
\longrightarrow 
C_{P}(Q_{k})\cdot N_{Q_{k}}\left(Q_*\right)
\longrightarrow 
0
\]
and the analogous one involving $\Qbold_*$ and~$\Pbold$. 
The fibrations induced by these short exact sequences are principal,
and we have the following commutative diagram of horizontal fibrations: 
\begin{equation}    \label{eq: product fibration}
\begin{gathered}
\xymatrix{
BC_{P}(Q_{k})\times BN_{Q_{k}}\left(Q_*\right)
             \ar[r]\ar[d]
        &  B\left(C_{P}(Q_{k})\cdot 
N_{Q_{k}}\left(Q_*\right)\right)
             \ar[d]\ar[r] &   B^2Z(Q_k) \ar[d]\\
BC_{\Pbold}(\Qbold_{k})\times BN_{\Qbold_{k}}\left(\Qbold_*\right)
             \ar[r]
        &  B\left(C_{\Pbold}(\Qbold_{k})\cdot 
N_{\Qbold_{k}}\left(\Qbold_*\right)\right)
             \ar[r] &   B^2Z(\Qbold_k).
}
\end{gathered}
\end{equation}

First consider the fibers. The map on the first factor is a mod~$p$ homology isomorphism by
Proposition~\ref{proposition: discrete to continuous}.
For the second factor, we can apply the inductive hypothesis, because $N_{Q_{k}}\left(Q_*\right)$ is actually $N_{Q_{k}}\left(Q_0\subgroupeq \ldots \subgroupeq Q_{k-1}\right)$ (the normalizer of a shorter chain); likewise $N_{\Qbold_{k}}\left(\Qbold_*\right)$ is $N_{\Qbold_{k}}\left(\Qbold_0\subgroupeq \ldots \subgroupeq \Qbold_{k-1}\right)$.
Therefore 
$BN_{Q_{k}}\left(Q_*\right)\rightarrow BN_{\Qbold_{k}}\left(\Qbold_*\right)$ is a mod~$p$ homology isomorphism. 

Turning to the base, we know that 
$BZ(Q_k)\rightarrow BZ(\Qbold_k)$
induces a mod~$p$ homology isomorphism by 
Corollary~\ref{cor: center is snug}.
The Rothenberg-Steenrod spectral sequence
\cite[Corollary~7.29]{McCleary} then shows that 
$B^2Z(Q_k)\rightarrow B^2Z(\Qbold_k)$ likewise induces an isomorphism on mod~$p$ homology. 

We apply the Serre spectral sequence to~\eqref{eq: product fibration}. The base spaces are simply connected. The maps between the bases and the fibers are mod~$p$ homology isomorphisms. Hence the map of total spaces is a mod~$p$ homology isomorphism as well. Feeding this result back into
\eqref{eq: the ladder we want}
finishes the proof. 
\qedhere
\end{proof}

Finally, we arrive at the proof of this section's main result. 

\begin{proof}[Proof of Theorem~\ref{theorem: comparison of chains}]
Let $P_{*}$ denote the chain $P_0\subgroupeq \ldots\subgroupeq P_{k}$,
and similarly let $\Pbold_{*}$ denote the chain $\Pbold_0\subgroupeq \ldots\subgroupeq\Pbold_{k}$.
We compare the normalizers via the ladder of short exact sequences
\begin{equation} \label{eq: ladder for normalizers}
\begin{gathered}
\xymatrix{
0\ar[r] 
& C_{G}(P_k)\cdot \Largestrut N_{P_{k}}\left(P_*\right)
\ar[r]\ar@{^(->}[d] 
        & \Largestrut 
        N_{G}\left(P_*\right)\ar[r]\ar@{^(->}[d]
        & \Largestrut \Out_{G}(P_*)\ar[r]\ar[d]^{\cong\  \rm{by \ Prop.~\ref{proposition: iso of outer autos discrete to cont}}}
        & 0\\
0\ar[r] & 
C_{G}(\Pbold_{k})\cdot 
N_{\Pbold_{k}}\left(\Pbold_*\right) \ar[r]
        & N_{G}\left(\Pbold_*\right)\ar[r]
        & \Out_{G}(\Pbold_*)\ar[r]
        & 0. 
}
\end{gathered}
\end{equation}
We are in the exact same situation as in the proof of Proposition~\ref{proposition: p-toral case of normalizers}. 
The inclusion $N_{P_k}\left(P_*\right)\hookrightarrow N_{\Pbold_k}\left({\Pbold_*}\right)$ 
induces an isomorphism on mod $p$~homology of classifying spaces by Proposition~\ref{proposition: p-toral case of normalizers}.
The centralizers $C_{G}(P_k)$ and $C_{G}(\Pbold_{k})$ are equal. 
And lastly, $C_{G}(P_k)\cap  N_{P_{k}}\left(P_*\right) =Z(P_k)$
and $C_{G}(\Pbold_{k})\cap 
N_{\Pbold_{k}}\left(\Pbold_*\right)=Z(\Pbold_k)$, and 
$Z(P_k)\rightarrow Z(\Pbold_k)$ induces an isomorphism on mod~$p$ homology of classifying spaces
by Corollary~\ref{cor: center is snug}.
\end{proof}

\bibliographystyle{amsalpha}
\bibliography{WIT-fusion-bibliography}

\end{document}